\documentclass[10pt,a4paper,reqno]{amsart} \usepackage{epsfig,amssymb,latexsym,amscd,amsmath,amsthm,stmaryrd, bussproofs}
\usepackage[inline]{enumitem}
\usepackage{verbatim}
\usepackage[all,2cell]{xy} \xyoption{v2} \UseAllTwocells
\usepackage{draftwatermark} \SetWatermarkLightness{1} \SetWatermarkFontSize{5cm}
\setlength{\oddsidemargin}{-1.04cm}
\setlength{\evensidemargin}{-1.04cm} \textwidth 18cm \columnsep 1cm
\setlength{\topmargin}{-1.54cm} \textheight 23cm \headheight .3in
\setlength{\parindent}{0pt}
\theoremstyle{plain}
\newtheorem{teo}{Theorem}[section]
\newtheorem{theo}[teo]{Theorem}
\newtheorem{coro}[teo]{Corollary}
\newtheorem{lema}[teo]{Lemma}

\theoremstyle{remark}
\newtheorem{rema}[teo]{Remark}

\newtheorem{nota}[teo]{Notation}
\theoremstyle{definition}
\newtheorem{defi}[teo]{Definition}
\newtheorem{obse}[teo]{Observation}

\newtheorem{prop}[teo]{Proposition}
\usepackage{manfnt}
\usepackage[bbgreekl]{mathbbol}
\usepackage{stmaryrd}
\usepackage{euscript}
\usepackage{mathrsfs}
\usepackage{txfonts}
\usepackage[all]{xy}
\usepackage{graphics}
\usepackage{soul}
\renewcommand{\Perp}{\bot\!\!\!\bot}
\newcommand{\bbot}{\bot\!\!\!\bot}

\renewcommand{\t}{t}

\newcommand{\IA}{\mathcal{I\!A}}

\newcommand{\cs}{{\scalebox{0.9}{$\mathsf {s}$\,}}}
\newcommand{\ck}{{\scalebox{0.9}{$\mathsf {k}$\,}}}
\newcommand{\ci}{{\scalebox{0.9}{$\mathsf {i}$\,}}}
\newcommand{\ce}{{\scalebox{0.9}{$\mathsf {e}$\,}}}

\newcommand{\cp}{{\scalebox{0.9}{$\mathsf {p}$\,}}}
\newcommand{\ccc}{{\scalebox{0.9}{$\mathsf {c\hspace*{-.18em}c}$\,}}}

\newcommand{\cS}{{\bf \mathsf{\scriptstyle{S}}\,}}
\newcommand{\cK}{{\bf \mathsf{\scriptstyle{K}}\,}}

\newcommand{\cI}{{\bf \mathsf{\scriptstyle{I}}\,}}

\newcommand{\foca}{\ensuremath{\mathcal{{}^FOCA}}}

\newcommand{\rapp}{\mathrm{app}}
\newcommand{\rimp}{\mathrm{imp}}
\newcommand{\imp}{\ensuremath{\operatorname{imp}}}

\newcommand{\rmqp}{\mathrm{QP}}

\newcommand{\vvdash}{\ensuremath{\vdash\!\!\vdash}}

\newcommand{\fapp}{\mathfrak{app}}
\newcommand{\fpush}{\mathfrak{push}}
\begin{document}
\begin{abstract} In this paper we continue with the algebraic study of Krivine's realizability, completing and generalizing some of the
authors' previous constructions by introducing two categories with
objects the abstract Krivine structures and the implicative algebras
respectively. These categories are related by an adjunction whose
existence clarifies many aspects of the theory previously
established. We also revisit, reinterpret and generalize in
categorical terms, some of the results of our previous work such as:
the bullet construction, the equivalence of Krivine's,
Streicher's and bullet triposes and also the fact that these triposes
can be obtained --up to equivalence-- from implicative algebras or
implicative ordered combinatory algebras.
\end{abstract}
\title[The
  category of implicative algebras and realizability] {The category of
  implicative algebras and realizability} \author{Walter Ferrer
  Santos} \address{Facultad de Ciencias\\Udelar\\ Igu\'a
  4225\\11400. Montevideo\\Uruguay\\Centro Universitario de la
  regi\'on este, Udelar\\ Tacuaremb\'o, 20100, Punta del Este,
  Departamento de Maldonado, Uruguay.} \email{wrferrer@cure.edu.uy}
\author{Octavio Malherbe} \address{Facultad de Ingenier\'ia,
  Udelar\\ J. Herrera y Reissig 565 \\ 11300. Montevideo
  \\ Uruguay\\Centro Universitario de la regi\'on este,
  Udelar\\ Tacuaremb\'o, 20100, Punta del Este, Departamento de
  Maldonado, Uruguay.} \email{malherbe@fing.edu.uy} \thanks{The
  authors would like to thank Anii/FCE, EI/UdelaR and LIA/IFUM for
  their partial support. We also thank M. Guillermo and A. Miquel for
  many profitable exchanges on the topics of this paper.} \today
\maketitle
\tableofcontents
\section{Introduction}
\newcounter{bourbaki}
\renewcommand{\thebourbaki}{{\sf{\Roman{bourbaki}}}}
\begin{list}{\large{\sf{\Roman{bourbaki}.}}}{\usecounter{bourbaki}} \bigskip
\item\label{item:intro} In this work, that should be seen as an
  unavoidable completion of \cite{kn:ocar,kn:ocar2}, we explore the
  morphisms of the structures considered therein and analyze Krivine's
  realizability in this perspective. We look at the category with
  objects the \emph{implicative algebras} a.k.a. IAs, and its
  applicative or even computationally dense morphisms and also define
  morphisms of the same kind for \emph{Abstract Krivine Structures}
  a.k.a. AKSes. This is a continuation of the build--up of the
  categorical viewpoint in classical realizability theory as appeared
  firstly in Streicher's work (see \cite{kn:streicher}) and was
  followed by the contributions presented in
  \cite{kn:report,kn:ocar,kn:ocar2}. For the definitions of the
  morphisms that are suggested in this paper, we adapted the basic
  ideas appearing in \cite{kn:OostenZou2016}, related also to the work
  in \cite{kn:hofstra2003} and \cite{kn:hofstra2006}.

\item\label{item:intro2} We defined the morphisms in Sections
  \ref{section:appmorfisms}, \ref{section:moraks} the functoriality of
  the constructions was shown in Section \ref{section:functoriality},
  and the main results appear in Sections \ref{section:adjunction},
  \ref{section:changing}, \ref{section:IAtoTripos} and
  \ref{section:implocas}. 
\item\label{item:description} Next, we proceed to a more precise
  description of each of the Sections.

  \smallskip
  \noindent In Section \ref{section:iaaks}, we recall/introduce, some
  concepts treated in previous papers by the current authors or by
  collaborators --though not always in the same guise-- e.g.
  \cite{kn:ocar,kn:ocar2}, \cite{kn:implimiquel,kn:implimiquel2} and
  also the recent thesis \cite{kn:emthesis} --where an overview of the
  structures treated in this paper is presented. Therein, we recall
  the concept of implicative algebra due to A. Miquel (see
  \cite{kn:implimiquel, kn:implimiquel2, kn:newmiquel} and also
  \cite{kn:emthesis}), and of its twin sister: ordered combinatory
    algebras with full adjunction. We also recall the concept of
    abstract Krivine structure due to Streicher, and two basic
    constructions, named $A$ and $K$, that were introduced in
    \cite{kn:ocar} and \cite{kn:ocar2} and go back and forth between
    abstract Krivine structures and implicative algebras. We also
    give some results related to these concepts that
    are needed later. Moreover, we adapt/recall some basic
    constructions we have developed previously (particularly in
    \cite{kn:ocar2} and that also appeared in \cite{kn:implimiquel2})
    that produce triposes from implicative algebras. We intend to
    understand the results on the equivalence of the triposes that we
    name as Krivine's, Streicher's and bullet tripos (see
    \cite{kn:ocar2}) in terms of the morphisms introduced in IAs and
    AKSes in Section \ref{section:appmorfisms}. The results concerning
    this, appear in Sections \ref{section:IAtoTripos} and
    \ref{section:implocas}.

  \smallskip
  \noindent In Section \ref{section:appmorfisms} we
  introduce the concepts of computationally dense morphism and
  applicative morphism of implicative algebras (or full adjunction
  ordered combinatory algebras), thus defining two categories with
  objects the implicative algebras: ${\bf IAc}$ and ${\bf IA}$. The
  morphisms defined in this section are, as we mentioned before, an
  adaptation to our context of constructions from
  \cite{kn:OostenZou2016} that are also related to
  \cite{kn:hofstra2003} and \cite{kn:hofstra2006}.

  \smallskip
  \noindent In Section \ref{section:moraks} we define the similarly
  named morphisms between abstract Krivine structures to obtain the
  categories ${\bf AKSc}$ and ${\bf AKS}$. In Section
  \ref{section:functoriality} we prove that the constructions named
  $A$ and $K$ can be extended to functors in the corresponding
  categories (see the diagram below).

  \smallskip
  \noindent In Section
  \ref{section:adjunction} we obtain the adjunction result we were
  looking for, as depicted in the diagram: \[\xymatrix{ {\bf AKSc}
    \ar@/^2pc/[rr]^{A} & {\hspace*{-0.19cm}\mbox{\larger[0]$\perp$}} &
    \ar@/^2pc/[ll]^{K}{\bf IAc}.}\]

  \noindent In Section \ref{section:changing} we start by recalling
  some results and definitions on closure operators and their
  \emph{Alexandroff approximations}. Then, we generalize the
  construction appearing in \cite{kn:ocar2} of the map $A_\bullet$
  that produces an IA named $A_\bullet(\mathcal K)$ from an AKS named
  $\mathcal K$ where $A_\bullet(\mathcal K)$ is an implicative algebra
  based on the closed subsets with respect to the \emph{bullet}
  closure operator. The bullet operator is the Alexandroff
  approximation of Streicher's double perpendicular operator. This
  generalization consists in defining a functor called $V: (\mathcal
  A,\iota) \mapsto \mathcal A_\iota$ with domain the category of
  (implicative) comonads on ${\bf IA}$ (named as ${\bf
    Co}_{\operatorname{imp}}({\bf IA})$) and codomain ${\bf
    IA}$. Moreover, $V$ is the right adjoint of the functor $T : {\bf
    IA} \to {\bf Co}_{\operatorname{imp}}({\bf IA})$ that produces the
  trivial comonad. The above-mentioned results are particular cases of 
  categorical constructions that appear for example in R. Street,
  \emph{The formal theory of monads}, \cite{kn:street}. In particular,
  the functor $V$ is the opposite version of the one called therein
  \emph{the construction of algebras} functor that in our case is
  simply the fixed point functor, i.e. the functor that sends the
  implicative algebra based on $A$ in the implicative algebra based on
  $A_{\iota}=\{a \in A: \iota(a)=a\}$.

  \smallskip
  \noindent
  In Section
  \ref{section:IAtoTripos} we revisit the results of our previous
  papers \cite{kn:ocar,kn:ocar2} in order to complete and reformulate
  their main results in categorical terms using the tools introduced
  along the paper. In particular, we prove the theorem that guarantees
  that the triposes associated to $\mathcal A \in {\bf IA}$ and
  $\mathcal A_\iota$ are equivalent as well as the triposes associated
  to $\mathcal B \in {\bf IA}$ and $A(K(\mathcal B))$.

  \smallskip
  \noindent
  In the final
  Section \ref{section:implocas}, we present a way to obtain the main
  constructions of \cite{kn:ocar} and \cite{kn:ocar2}, and more
  specifically the results of Streicher's paper \cite{kn:streicher},
  in terms of the categorical viewpoint taken up in this work. This is
  necessary due to the fact that the structures the author uses
  therein, are not implicative algebras as the implication does not
  commute with infinite meets. Hence, new categories have to be
  introduced (the category of implicative ordered combinatory algebras
  --named ${\bf IOCA}$--) and new aspects need to be taken care of.
  \section{Implicative algebras, abstract Krivine structures}\label{section:iaaks}
  \item In this section, we recall briefly the definitions of
implicative algebras (IAs), FOCAs and AKSes and some constructions
introduced in \cite{kn:ocar2} that will be completed in this paper and
transformed into functors in the adequate categories. For details on
these constructions, we refer the reader to the just mentioned
reference. The class of all IAs will be denoted as $\IA$, of all FOCAs
will be denoted as $\foca$ and of all AKSes as $\mathcal {AKS}$.
\item\label{item:miquel} In this paragraph, we review the basic
  definitions of implicative algebras and implicative structures as
  employed by A. Miquel in \cite{kn:implimiquel} and
  \cite{kn:implimiquel2}.
  \begin{defi}(A. Miquel:
    \cite{kn:implimiquel}, \cite{kn:implimiquel2} and
    \cite{kn:newmiquel}).\label{defi:mainimpl} 
      An \emph{implicative} structure is a triple $\mathcal
      A=(A,\leq,\operatorname{imp})$
      where:
      \begin{enumerate}
      \item \label{item:i1}$(A,\leq)$ is a
        complete meet semilattice;
      \item\label{item:i2}$\imp:A \times A \rightarrow A$ is a map
        also denoted as $a\rightarrow b:=\imp(a,b)$ satisfying:
        \begin{enumerate*}
        \item[(\,i\,)]\label{item:i21} If $a'\leq a$, $b \leq b'$,
          then $a \rightarrow b \leq a'\rightarrow
          b'$,
        \item[(\,ii\,)] \label{item:i22}$\bigcurlywedge_{b \in
            B} (a \rightarrow b) = a \rightarrow \bigcurlywedge_{b\in
            B} b$ for all $B \subseteq
          A$. \end{enumerate*}
      \end{enumerate} We denote: $a\to b \to
      c:=a\to(b \to c)$.
  \end{defi}
  In the definition above, $\bigcurlywedge$ denotes the infimum of the
  corresponding subset of $A$ with respect to the given
  order. Sometimes we use the standard notation $\operatorname{inf}$
  for the infimum instead of $\bigcurlywedge$.
  \begin{defi}
    \begin{enumerate}
    \item \label{item:i4} Given, $a,b \in A$, we define
      $a\cdot_\rightarrow b:= \bigcurlywedge \{c: a \leq (b
      \rightarrow c)\} \in A$. We call {\em application} and denote it as $\operatorname{app}:A \times A \to
      A$, the function given as: $\operatorname{app}(a,b):=
      a\cdot_\rightarrow b$.
    \item \label{item:i5} We call
      \begin{enumerate*}
      \item \label{item:i52}$\ck=\bigcurlywedge_{a,b \in A}(a\rightarrow b \rightarrow a)$,
      \item \label{item:i53}$\cs=\bigcurlywedge_{a,b,c \in
        A}((a\rightarrow b \rightarrow c)\rightarrow (a \rightarrow
        b)\rightarrow a \rightarrow
        c)$,
      \item \label{item:i51}$\ci=\bigcurlywedge_{a \in A}(a\rightarrow
        a)$.
      \end{enumerate*}
    \item \label{item:i6} A \emph{separator} in $\mathcal A$ is a
      subset $\mathcal S \subseteq A$ such
      that:
      \begin{enumerate*}
      \item \label{item:i61} If $a \in
        \mathcal S$ and $a \leq b$ then $b \in \mathcal
        S$,
      \item \label{item:i63}If $a \rightarrow b \in \mathcal S$
        and $a\in \mathcal S$, then $b \in \mathcal S$ (\emph{modus
          ponens})
      \item \label{item:i62} $\ck,\cs \in \mathcal
        S$.
      \end{enumerate*}
    \item \label{item:i7} An {\em implicative
        algebra} is a pair $(\mathcal A,\mathcal S)$ where $\mathcal
      A$ is an implicative structure and $\mathcal S$ is a separator
      in $\mathcal A$. We call $\IA$ the family of implicative
      algebras.
    \item\label{item:i8} We call: $\ccc=\bigcurlywedge(((a \rightarrow
      b)\rightarrow a) \rightarrow a)$, $\perp=\bigcurlywedge_{a \in
        A}a$ and $\top=\bigcurlywedge_{a \in \emptyset}a$. We say that
      the implicative algebra is classical when $\ccc \in \mathcal S$
      and that it is consistent when $\perp \not \in \mathcal
      S$.
    \end{enumerate}
  \end{defi}
  \begin{nota}If the pair
    $(\mathcal A,\mathcal S)$ is an implicative algebra as defined
    above, sometimes we abuse notations and say that $\mathcal A$ is an
    implicative algebra with separator $\mathcal S_{\mathcal
      A}:=\mathcal
    S$.
  \end{nota}
  \begin{obse}\label{obse:initial} \label{obse:preordersentice}
    \begin{enumerate}
      \item It is important to recall that a complete meet semilattice
        can be proved to be also a complete lattice. Following the
        original Miquel's definition, we state explicitly only half of
        the completeness property for reasons of emphasis as the other
        half of the completeness property is rarely used in the
        theory.
    \item The operation $\cdot_\to$ is monotonic in both
      variables; \item For all $a,b,c \in A,\, a \cdot_\to b \leq c
      \text{ if and only if } a \leq (b \rightarrow c)$. i.e.  for all
      $b \in A$, the functor $-\cdot_\to b$ is left adjoint to $b \to
      -$
      \begin{equation}\label{eqn:commutdiag}\xymatrix{ (A,\leq)
          \ar@/^2pc/[rr]^{- \cdot_\to b} & {\mbox{\larger[-2]$\perp$}}
          & \ar@/^2pc/[ll]^{b \to -}(A,\leq),}
      \end{equation}
      and the unit and counit are respectively: $a \leq b \rightarrow
      (a \cdot_\to b)\,\,\textrm{and}\,\,(a \to b)\cdot_\to a \leq
      b$. Notice that from the unit inequality, the fact that
      $\mathcal S$ is upwards closed, and the {\em modus ponens} rule,
      we deduce easily that if $a,b \in \mathcal S$, then $a \cdot_\to
      b \in \mathcal S$.
    \item Later (see Paragraph \ref{item:implvsfoca}: {\bf From
      implicative algebras to FOCAs}.) we prove that the combinators
      $\ck$ and $\cs$ of an implicative algebra, can be characterized
      in terms of the application morphism (written as juxtaposition,
      i.e. $a \cdot_{\to} b = ab$) as:
      \begin{enumerate}
      \item $\ck=\sup\{\ell \in A: \forall a,b \in A, \ell ab \leq a$\};
        \item $\cs=\sup\{\ell \in A:\forall a,b,c \in A, \ell abc \leq
          ac(bc)\}$;
          \item $\ci=\sup\{\ell \in A: \forall a \in A, \ell a \leq
            a$\}.
      \end{enumerate}
    \item For future reference we recall the following definition of a
      preorder $\vdash$ in $A$ that is called the entailment relation
      (see \cite[Definition 4.9]{kn:ocar} and \cite[Definition
        3.13]{kn:ocar2}) : for $a, a' \in A$ we write $a \vdash a'$ if
      $a \to a' \in \mathcal S$. In that case it follows from (3) that
      $(a \to a') \cdot_\to a \leq a'$, and any element $s \in S$ such
      that $s \cdot_\to a \leq a'$ is called a realizer of the
      relation $a \vdash a'$ (and we write $s\vvdash (a \vdash
      a')$). It is clear that the set of realizers of $a
        \vdash a'$ is the set of elements $\{s \in \mathcal S: s \leq
        a \to a'\}$. 
    \item Once we display some other elements of $\mathcal S$, it can
      be proved easily that the entailment relation is transitive (see
      for example \cite[Lemma 4.14, (3)]{kn:ocar} or \cite[Proposition
        3.20]{kn:newmiquel}). \item The reader should be aware that in
      the papers mentioned before we used two notations $a \sqsubseteq
      a'$ and $a \vdash a'$ for the entailment relation, hereafter we
      will only use the second. This preorder depends on the separator
      $\mathcal S$ and when needed we denote it as $\vdash^{\mathcal
        S}$.
\end{enumerate}
\end{obse}
We present here the following simple lemma that is used in Theorem
\ref{theo:mainadj}.
\begin{lema}\label{lema:equalinf} Assume that $\mathcal A$ is an implicative algebra and that $C,D \subseteq A$. Then: $\bigcurlywedge\{x\to
y: x \leq \bigcurlywedge C, y \in D\}= \bigcurlywedge C \to
\bigcurlywedge D$. \end{lema}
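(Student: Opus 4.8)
The plan is to write $\alpha = \inf C$ and $\delta = \inf D$ and to prove the two inequalities $\inf\{c \to d : c \leq \alpha,\, d \in D\} \leq \alpha \to \delta$ and $\alpha \to \delta \leq \inf\{c \to d : c \leq \alpha,\, d \in D\}$ separately, using only the order-theoretic axioms of an implicative structure. Completeness of $(A,\leq)$ guarantees that all the infima involved exist, so the two bounds together will give the equality.

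For the first inequality I would observe that $\alpha \leq \alpha$, so the index $c = \alpha$ is admissible; consequently $\{\alpha \to d : d \in D\} \subseteq \{c \to d : c \leq \alpha,\, d \in D\}$, and the infimum of the larger family is bounded above by that of the smaller subfamily. Axiom \eqref{item:i22} of Definition \ref{defi:mainimpl} then gives $\inf\{\alpha \to d : d \in D\} = \alpha \to \inf D = \alpha \to \delta$, which yields the desired bound. This is the step where distributivity of $\to$ over arbitrary meets in its second argument is essential.

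For the reverse inequality I would show that $\alpha \to \delta$ is a lower bound of the family $\{c \to d : c \leq \alpha,\, d \in D\}$. Fixing $c \leq \alpha$ and $d \in D$, one has $\delta = \inf D \leq d$, and applying the monotonicity axiom \eqref{item:i21} (antitone in the first variable, monotone in the second) with $c \leq \alpha$ and $\delta \leq d$ gives $\alpha \to \delta \leq c \to d$. Since this holds for every admissible pair, $\alpha \to \delta$ is at most the infimum of the family, and combining with the first inequality closes the proof.

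As for the main obstacle: the argument is genuinely routine, and the only points requiring care are (i) noticing that $c = \inf C$ itself is an admissible index, which is precisely what makes the upper bound tight, and (ii) invoking the two halves of the implication axiom in the correct directions. It is also worth verifying the degenerate cases: when $D = \emptyset$ both sides equal $\top = \bigcurlywedge_{a \in \emptyset} a$ (using \eqref{item:i22} with an empty index set), and when $C = \emptyset$ one has $\alpha = \top$ and every $c \in A$ satisfies $c \leq \top$, so the same two-inequality argument applies verbatim. Note that neither the separator $S$ nor the combinators $\ci, \ck, \cs$ intervene here; only \eqref{item:i21} and \eqref{item:i22} are used.
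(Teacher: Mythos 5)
Your proof is correct and takes essentially the same route as the paper's: both rest on axiom \eqref{item:i22} (preservation of meets by $\to$ in its second argument) combined with the monotonicity axiom \eqref{item:i21}, together with the key observation that $c=\inf C$ is itself an admissible index. The paper merely compresses your two-inequality sandwich into a chain of equalities --- collapsing the infimum first over $d\in D$ for each fixed $c$, then over $c\leq\inf C$ by antitonicity --- so the mathematical content is identical, with your version making the attainment step and the degenerate cases explicit.
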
 \begin{proof} Suppose $D\neq\emptyset$ and consider the sets $A=
  \{x\to y: x \leq \bigcurlywedge C, y \in D\}$ and $B=
  \{\bigcurlywedge C\to y: y \in D\}$. Since for every $a\in A$ there
  exists a $b\in B$ such that $b\leq a$ we have that: $\bigcurlywedge
  B\leq \bigcurlywedge A$ ($\bigcurlywedge B$ is a lower bound of
  $A$), but $\bigcurlywedge B=\bigcurlywedge C\to \bigcurlywedge D$ by
  preservation of infimum (see Definition \ref{defi:mainimpl}\,
  (\ref{item:i22})). On the other hand, since $B\subseteq A$ then
  $\bigcurlywedge B\geq \bigcurlywedge A$, i.e.  $\bigcurlywedge C\to
  \bigcurlywedge D\geq \bigcurlywedge A$. If $D=\emptyset$ we have
  $\bigcurlywedge\{x\to y: x \leq \bigcurlywedge C, y \in D\}=\bigcurlywedge_{a \in \emptyset}a=\top$ and $\bigcurlywedge C \to
\bigcurlywedge D=\bigcurlywedge(\bigcurlywedge C \to
 D)=\top$ by Definition~\ref{defi:mainimpl} (\ref{item:i2}).
\end{proof}

\item
\label{item:examplesgen} In \cite{kn:implimiquel,kn:implimiquel2,kn:newmiquel} and in the recent thesis \cite{kn:emthesis} the authors
present examples and applications of the concept of implicative
structures or algebras, such as complete boolean algebras or more
generally complete Heyting algebras; the set\, $\mathcal P(P)$ when
$P$ is a total combinatory algebra; the so called dummy structures
with implications $a \to b:=\top$ or $a \to b:=b$. Also, some
particular classes of implicative algebras appear in
\cite{kn:emthesis} called disjunctive algebras and conjunctive
algebras. In what follows we concentrate in the examples coming from
classical realizability theory.
\item \label{item:implvsfoca}In \cite{kn:ocar,kn:ocar2} the concept of
  ordered combinatory algebra was proposed as the basic foundation of
  realizability. In particular, the concept of
  \emph{full adjunction ordered combinatory algebra} a.k.a. FOCA, is
  presented and the associated tripos is studied. It turns out that
  the concept of implicative algebra and the concept of FOCA are
  basically coincident. This was observed in \cite[Section
    3]{kn:ocar2} and also in the references
  \cite{kn:implimiquel,kn:implimiquel2,kn:newmiquel} in all cases
  without detailed proofs. We recall briefly the relevant definitions
  and write down a complete proof.
  \begin{defi}\label{item:example}\label{example:ocas} A
    \emph{full adjunction combinatory algebra} $\mathcal
    A=(A,\leq,\operatorname{app},\imp,\cs,\ck,\Phi_{\mathcal A})$ is a
    septuple where $(A,\leq)$ is a complete inf-semilattice, and
    $\operatorname{app},\imp$ are maps from $A\times A$ into $A$, the
    second satisfies the monotony condition (\ref{item:i21})(i) in
    Definition \ref{defi:mainimpl} and the first is monotonic in both
    variables (we abbreviate $\operatorname{app}(a,b)=ab$ and
    $\operatorname{imp}(a,b)=a\to b$). Both maps together, satisfy the
    full adjunction property, i.e.  $ab:=\operatorname{app}(a,b) \leq
    c \Leftrightarrow a \leq (b \rightarrow c):= \imp(b,c)$. The
    \emph{filter} $\Phi_{\mathcal A}$ is a subset of $A$ that is
    closed under application, is upper closed and contains the
    elements $\cs$ and $\ck$. Moreover, $\cs$ and $\ck$ are elements
    with the following properties: $\ck ab \leq a$ and $\cs abc \leq
    (ac)(bc)$.
  \end{defi}
  \begin{nota} If $\mathcal A$ is a FOCA as
    above, the filter will be frequently abbreviated as
    $\Phi$. Moreover, the element $\ci=\cs\ck\ck$ is in fhe filter and
    it is easy to show that for all $a \in A$, $\ci a = \cs\ck\ck a
    \leq (\ck a)(\ck a)\leq a$.
  \end{nota}
  \begin{description} \item [From FOCAs to implicative algebras]
    \begin{enumerate}
    \item Observe those general considerations about adjoint functors
      (or else Observation \ref{obse:initial} and a direct argument
      --see for example \cite[Section 3]{kn:ocar}) can be used in
      order to deduce that the implication of a FOCA commutes with
      infinite meets, i.e. the full condition \eqref{item:i22} in
      Definition \ref{defi:mainimpl} holds in this
      situation. Moreover, it is easy to prove --compare with
      \cite{kn:ocar2}-- that for a FOCA one has that $ab=a\cdot_\to
      b$. Hence, given the FOCA we have the first ingredients of an
      IA, i.e. a complete meet semilattice and a map --the implication
      of the FOCA-- satisfying the adequate properties.
\item Next, we show that the filter of the FOCA --named $\Phi$-- is a
  separator.  The fact that it is upper closed by inequalities is part
  of the definition of a filter. To check the \emph{modus ponens} we
  proceed as follows: take $a \in \Phi$ and $a \to b \in \Phi$, we can
  use the closedness of $\Phi$ by the application map, to deduce that
  $(a \to b)\cdot_\to a \in \Phi$ and hence as $(a \to b)a \leq b$
  (see the inequality for the counit of the adjunction), we conclude
  that $b \in \Phi$. The proof that $\bigcurlywedge_{a,b \in
    A}(a\rightarrow b \rightarrow a),\,\bigcurlywedge_{a,b,c \in
    A}((a\rightarrow b \rightarrow c)\rightarrow (a \rightarrow
  b)\rightarrow a \rightarrow c) \in \Phi$, follows directly by
  application of the full adjunction between application and
  implication for a FOCA. For example, if we apply the inequality $\cs
  xyz \leq (xz)(yz)$ (valid for all elements of $A$) to the case that
  $x=a\to b \to c,\,y=a\to b,\,z=a$ we deduce that $\cs (a\to b \to
  c)(a\to b)a \leq ((a\to b \to c)a)((a\to b)a) \leq (b \to c)b \leq
  c$, where the last two inequalities come from applying the counit of
  the adjunction between application and implication. Hence as the
  above inequality holds for all $a,b,c \in A$, we deduce that $\cs
  \leq \bigcurlywedge_{a,b,c \in A}((a\rightarrow b \rightarrow
  c)\rightarrow (a \rightarrow b)\rightarrow a \rightarrow c)$ and
  then that the written infimum is an element of the separator. For
  $\ck$ one proceeds similarly.
     \end{enumerate}
  \item [From implicative algebras to FOCAs] Given and implicative
    algebra as above with a separator $\mathcal S$, we define a FOCA
    that has the following structure: the basic meet semilattice is
    the same, the operations are $\operatorname{app}:=\cdot_\to$, the
    implication and $\ck$ and $\cs$ are the same, and the filter
    coincides with the separator, i.e. $\Phi:=\mathcal S$. It will be
    clear that the above construction produces a FOCA once we prove
    the following three assertions that guarantee that $\ck$ and $\cs$
    as well as $\mathcal S$ satisfy the required FOCA's
    conditions. Taking into account that $\cdot_\to=\operatorname{app}$ we write $ab$ in this context, instead of $a\cdot_\to b$ we just write $ab$.
    \begin{enumerate} \item As $\ck \leq a\to(b \to a)$
      for all $a,b \in A$, using the full adjunction we have that $\ck
      a \leq b \to a$ and $\ck a b \leq a$. \item For all $a,b,c \in
      A$ we have that $\cs \leq (a \rightarrow b \rightarrow
      c)\rightarrow (a \rightarrow b)\rightarrow a \rightarrow c$ and
      we need to prove that for all $a,b,c \in A$ we have: $\cs abc
      \leq (ac)(bc)$.  The proof below is extracted from
      \cite{kn:implimiquel2} and uses systematically the inequality
      (unit of the adjunction) valid for all $d,e \in A$, $d \leq e
      \to de$. By definition $\cs=\bigcurlywedge_{x,y,z \in A}((x \to
      y \to z)\to (x \to y) \to x \to z)\leq \bigcurlywedge_{a,b,c \in
        A}((c \to bc \to (ac)(bc))\to (c \to bc) \to c \to
      (ac)(bc)))$. This last inequality is a consequence that the
      change of variables, $x=c, y=bc, z=(ac)(bc)$ implies that the
      infimum is taken over a set that a priori is contained in the
      other one. Now, as $bc \to (ac)(bc) \geq ac\,,\,c \to ac \geq
      a\,,\, c \to bc \geq b$ and using the fact that the implication
      is contravariant in the first variable we deduce that
      $\bigcurlywedge_{a,b,c \in A}((c \to bc \to (ac)(bc))\to (c \to
      bc) \to c \to (ac)(bc)))\leq \bigcurlywedge_{a,b,c \in A} a \to
      b \to c \to (ac)(bc)$. We have proved that for all $a,b,c \in A$
      we have that: $\cs \leq (a\to b\to c \to (ac)(bc))$ so that $\cs
      abc \leq (ac)(bc)$. \item If $a,b \in \Phi=\mathcal S$, then as
      $ab \in \Phi=\mathcal S$ as was proved in Observation
      \ref{obse:initial}.
  \end{enumerate} \end{description}
  Notice that putting together the results on the combinators proved
  above, we obtain a proof of the characterization of $\ck$ and $\cs$
  of a FOCA --or an implicative algebra-- written in
  \ref{obse:initial},(4). The proof of the corresponding property for
  $\ci$ is left to the reader to prove.
\item \label{item:AKconstruction}\label{item:example1} Next we recall
  the definitions of AKSes and of the maps $A$ and $K$ depicted in the
  diagram below (see
  \cite{kn:ocar2}): \begin{equation}\label{eqn:mainadj}\xymatrix{
      \mathcal{AKS} \ar@/^2pc/[rr]^{A} & & \ar@/^2pc/[ll]^{K}
      \mathcal{I\!A}.} \end{equation}
  \begin{description}
  \item[Definition of AKS] The definition of an AKS that follows, is
    different --but for our purposes equivalent-- to the one
    introduced initially in \cite{kn:streicher} and later in our
    previous work (see \cite{kn:ocar} and
    \cite{kn:ocar2}).
    \begin{enumerate} \item An abstract Krivine
      structure, AKS is a septuple $\mathcal
      K=(\Pi,\fapp,\fpush,\bbot,\mathrm{QP},\cK,\cS)$ with $\Pi$ an arbitrary
      set; $\bbot$ a subset of $\Pi \times \Pi$ called \emph{the
        pole}; $\fapp,\fpush: \Pi\times \Pi \to \Pi$ are two maps
      called \emph{application} and \emph{push} and written also as
      $\fpush(t,\pi)=t \cdot \pi$ and $\fapp(t,s)=ts$. We denote $t
      \perp \pi$ when $(t,\pi) \in \bbot$. The subset $\bbot \subseteq
      \Pi \times \Pi$ is assumed to be \emph{compatible} with the maps
      $(\fapp,\fpush)$ in the sense that for all $t,s,\pi \in \Pi$,\,
      $t \perp \fpush(s,\pi) \Rightarrow \fapp(t,s) \perp \pi$\,,
      (i.e., $t \perp s\cdot \pi \Rightarrow ts\perp \pi$). The set
      $\mathrm {QP} \subseteq \Pi$, called the set of
      \emph{quasi--proofs}, satisfies that:
      $\fapp(\mathrm{QP},\mathrm{QP})=(\mathrm{QP})(\mathrm{QP})
      \subseteq \mathrm{QP} \subseteq \Pi$, and $\cK,\cS \in
      \mathrm{QP} \subseteq \Pi$.  Moreover,
      \begin{enumerate}
      \item For all $t,s,\pi \in \Pi$, if $t \perp \pi$ then $\cK
        \perp t\cdot s\cdot\pi$;
      \item For all $t,s,u, \pi \in \Pi$ ,
        if $tu(su) \perp \pi$ then $\cS \perp t\cdot s\cdot
        u\cdot\pi$.
        \end{enumerate}
    \item The AKS is said to be \emph{strong} if for all $t,s,\pi \in
      \Pi$ the following holds: $ t \perp \fpush(s,\pi)
      \Leftrightarrow \fapp(t,s) \perp \pi $ (i.e., $t \perp s\cdot
      \pi \Leftrightarrow ts\perp \pi$).
    \item We use the following
      convention regarding latin and greek letters: the first variable
      in the map $\fpush$ is a latin letter and the
      second is a greek letter, and both variables are latin letters
      for the map $\fapp$. When an element of $\Pi$ appears
      in the left side of the relation $\bbot$, it will be denoted
      with a latin letter and it will be denoted with a greek letter
      when it appears at the right side, e.g. $t \perp
      \pi$\footnote{This unortodox notation is helpful because we have
        simplified the definition of AKS by identifying the sets of
        terms ($\Lambda$) and of stacks ($\Pi$) --that in fact are
        denoted and named as such in Krivine's work.  In order to keep
        the intuition we write $t,s,\dots$ when the element of $\Pi$
        is understood as a term and $\pi,\sigma,\cdots$ when it is
        viewed as a stack.}.  For subsets of $\Pi$ we adopt the
      following notations, we use capital letters $L,M,N$ when the set
      appears at the left of a perpendicularity symbol $\perp$ and
      $P,Q,R$ when appears at the right. For example, when we write $t
      \perp P$ we mean that $t \perp \pi$ for all $\pi \in P$ and
      similarly for $L \perp \pi$ that means: $\ell \perp \pi$ for all
      $\ell \in L$. We define ${}^\perp P=\{\t \in \Pi: t \perp P\}$
      and likewise for $L^\perp$. We can also define a closure
      operator (see Definition \ref{defi:appendix}) on $\mathcal
      P(\Pi)$ as follows: $P \leadsto \overline{P}: \mathcal P(\Pi)
      \to \mathcal P(\Pi)$ with $\overline{P}=({}^\perp P)^\perp$ (see
      \cite{kn:ocar,kn:ocar2} and \cite{kn:streicher}). For future
      reference we recall that --see \cite{kn:ocar2}-- there is also
      an Alexandroff closure operator (see Definition
      \ref{defi:appendix}) $(-)^\wedge$ on $\mathcal P(\Pi)$
      associated to the polarity defined as follows:
      $P^\wedge=\bigcup\{\overline{\{\pi\}}: \pi \in P\}$. 
\end{enumerate}
  
\item[From abstract Krivine structures to implicative
  algebras]\label{item:akstoia} Next we describe briefly the
  construction of the map $A$, $A:\mathcal{AKS} \to \IA$, see
  \cite{kn:ocar2}, and item \eqref{item:eseka} below.
  \begin{enumerate}
  \item Consider $\mathcal K$ an AKS and take $A:=\mathcal P(\Pi)$
    (the power set of $\Pi$) endowed with the reverse inclusion order,
    i.e., $P \leq Q$ if and only if $Q \subseteq P$. It is clear that
    $(A,\leq)$ is an $\operatorname{inf}$-complete semilattice with
    the infimum given by: $\bigcurlywedge\{P_i: i \in
    I\}=\bigcup\{P_i: i \in I\}$. \item The original push map of
    $\mathcal K$, $\Pi \times \Pi
    \stackrel{\mathfrak{push}}{\longrightarrow} \Pi$ induces naturally
    a map on $\mathcal P(\Pi)$ --that we denote as
    $\operatorname{push}:\mathcal P(\Pi) \times \mathcal P(\Pi) \to
    \mathcal P(\Pi)$. It is clear that, in the nomenclature of
    Definition \ref{defi:mainimpl}, \eqref{item:i2}, the map
    $\to:=\operatorname{push}({}^\perp(-) \times
    \operatorname{id}):\mathcal P(\Pi) \times \mathcal P(\Pi) \to
    \mathcal P(\Pi)$ or more explicitly $(P,Q) \mapsto (P\to
    Q)=\{t\cdot\pi: t\perp P\,,\, \pi \in Q\}={}^\perp P\cdot Q$, is
    an \emph{implication} in the inf-complete semilattice $(\mathcal
    P(\Pi),\supseteq)$, i.e. $(\mathcal P(\Pi),\supseteq,\to)$ is an
    implicative structure.
 \item It follows immediately from this definition that the
    associated application map $\cdot_\to:\mathcal P(\Pi)
    \times\mathcal P(\Pi) \to \mathcal P(\Pi)$ is given, by: $P
    \cdot_\to Q=\big\{\pi \in \Pi: \forall t \perp Q\,,\, t \cdot \pi
    \in P\}$.
  \item \label{item:eseka} If we call $\mathcal S_{\mathcal K}=\{P\in
    \mathcal P(\Pi): \exists t \in \mathrm{QP}, t \perp P\}=\{P
    \subseteq \Pi: {}^\perp P \cap \mathrm{QP}\neq \text{\O}\}$, it is
    clear that the subsets $\{\cK\}^{\perp}=\{\pi \in \Pi: \cK \perp
    \pi\}, \{\cS\}^{\perp}=\{\pi \in \Pi: \cS\perp \pi\} \subseteq
    \Pi$ are in $\mathcal S_{\mathcal K}$. Moreover if we construct in
    the manner of Definition \ref{defi:mainimpl}, the subsets
    $\ck^{\operatorname{IA}},\cs^{\operatorname{IA}}$ as infinite
    meets (in our case infinite unions in $\Pi$) of the adequate
    families of subsets, it can be proved --using the notations
    above-- that $\ck^{\operatorname{IA}} \subseteq
    \{\cK\}^\perp,\cs^{\operatorname{IA}} \subseteq \{\cS\}^\perp$,
    and consequently $\ck^{\operatorname{IA}},\cs^{\operatorname{IA}}
    \in \mathcal S_{\mathcal K}$. The rest of the proof that $\mathcal
    S_{\mathcal K}$ is a separator --and the missing details above-
    can be found for example in \cite[Lemma 2.37, Theorem
      2.38]{kn:newmiquel} or in \cite[Section 6]{kn:ocar2}. In the
    mentioned references the function $A$ is defined as $A(\mathcal
    K):=(\mathcal P(\Pi),\supseteq,\to,\mathcal S_{\mathcal K})$.
  \end{enumerate}
\item[From implicative algebras to abstract Krivine
  structures] \label{item:main2} In the opposite direction to the
  construction of the map $A:\mathcal{AKS} \to \IA$, in
  \cite[Definition 5.12]{kn:ocar2} we considered a map $K:\IA \to
  \mathcal{AKS}$ that we recall here. Given an implicative algebra:
  $\mathcal A=(A,\leq,\rimp_A=\to_A,\mathcal S)$ --call
  $\mathrm{app_A}=\cdot_\rightarrow$ its associated application
  morphism and ${\ck},{\cs}$ the basic combinators. Define $K(\mathcal
  A)=(\Pi,\Perp,\mathfrak{app}, \mathfrak{push}, \cK,\cS,\mathrm{QP})$
  as:
  \begin{enumerate}
  \item $\Pi:=A$;
  \item $\Perp:=\,\leq$\,,\, i.e.\,\, $s\perp\pi:\Leftrightarrow
    s\leq\pi$;
  \item
    $\mathfrak{app}:=\rapp_A$ \quad,\quad $\mathfrak{push}:=\rimp_A$;
  \item $\cK := {\ck}\quad ,\quad \cS :={\cs}$;
  \item $\mathrm{QP}:=\mathcal S$.
  \end{enumerate}

  In \cite{kn:ocar} and \cite{kn:ocar2} it was proved that $K(\mathcal A)$ is an AKS (in the equivalent context of FOCAs).
\end{description} \item \label{item:koca} In this paragraph, along which we follow closely Miquel's \cite[Section: The implicative
tripos]{kn:implimiquel2,kn:newmiquel}, we briefly show how to adapt
  some constructions appearing in \cite{kn:ocar,kn:ocar2}
  --specifically the construction of an indexed Heyting preorder from
  an implicative ordered combinatory algebra-- to the context of
  implicative algebras.  This is necessary in order to set up an 
  adequate platform to prove the results of Section
  \ref{section:IAtoTripos}. For details in the definition of Heyting
  preorders --that is not a standard term\footnote{As one of the reviewers suggest, probably a better name for the object defined is: {\em a cartesian closed preorder}.}-- we refer the reader to
  \cite[Definition 4.7]{kn:ocar} and to \cite[Remarks 4.8]{kn:ocar}
  for a discussion on this nomenclature. A Heyting preorder is a
  preorder $(D,\leq)$ equipped  with two binary operations $\wedge, \to$
  and a distinguished   element $\top$. The first operation $\wedge$
  satisfies the usual conditions for a ``meet'' and the second ``the
  Heyting implication'' satisfies that for all elements of $D$, $a
  \wedge b \leq c$ if and only if $a \leq b \to c$.  We call ${\bf
    Ord}$ the category of preorders and monotonic maps and
  ${\bf{HPO}}$, the category of Heyting preorders with morphisms the
  monotonic maps $f:(A,\leq)\to (B,\leq)$ such that
  $f(\top)\cong\top$, $f(a\wedge b) \cong f(a)\wedge f(b)$, $f(a\to
  b)\cong f(a)\to f(b)$ for all $a,b\in A$. An indexed HPO is a pseudo
  functor ${\bf F}:{\bf Set}^{\operatorname{op}}\to {\bf HPO}$ and
  similarly for the definition of indexed preorders. A particular case
  of indexed Heyting preorders are the triposes. For a list of the
  necessary additional conditions we refer the reader for example to
  \cite[Definition 2.9]{kn:ocar2}.

The two following assertions will be used
later. \begin{rema}\label{remark:equivalencee} \begin{enumerate} \item
    An indexed monotonic map $\sigma:{\bf C}\to{\bf D}$ of indexed
    preorders (see definition above) is an equivalence, if and only if for every set $I$, the
    monotonic map $\sigma_I:{\bf C}(I)\to{\bf D}(I)$ is order
    reflecting and essentially surjective. \item If ${\bf C}$ and
    ${\bf P}$ are equivalent indexed preorders and ${\bf P}$ is a
    tripos, then so is ${\bf C}$, and as triposes are equivalent.
\end{enumerate} \end{rema} \begin{description} \item[From implicative algebras to Heyting preorders] If $\mathcal A$ is an implicative
algebra based on a set $A$ with separator $\mathcal S$ and maximum
element $\top\in\mathcal S$ define $\mathcal H(\mathcal A):=(A,
\vdash, \wedge, \rightarrow)$ where: $\vdash$ is the relation defined
in Observation \ref{obse:preordersentice} --see the properties
therein--, the map $\to:A\times A \to A$ is the implication of
$\mathcal A$, and the map $\wedge: A \times A \rightarrow A$ is $a
\wedge b:= {\cp} ab$ (recall that any implicative algebra is an
ordered combinatory algebra, see \cite{kn:implimiquel}, and that
$\cp:=\lambda x \lambda y \lambda z. zxy$). Then $\mathcal H(\mathcal
A)$ is a Heyting preorder. The proof of this assertion follows closely
the proof in the case of implicative ordered combinatory
algebras. Regarding the proofs in this case we refer the reader to
\cite[Observation 3.9]{kn:ocar2} for the basic properties of $\cp$; to
\cite[Section 4, Lemma 4.14]{kn:ocar} for the proof that the above
construction produces a meet-semilattice and to \cite[Section 4,
  Theorem 4.15]{kn:ocar} for the rest of the proof. A careful
consideration of the relationship between Heyting preorders and
implicative algebras is a main theme in
\cite{kn:newmiquel}. \item[Products of implicative algebras and
  indexed Heyting preorders] Assume that $\mathcal A$ is an
implicative structure and consider the implicative structure $\mathcal
A^I$ for a fixed set of indexes $I$ that has as basic set the product
$A^I$ and as order and implication morphism the cartesian product of
the order of $A$ and of the implication of $A$. If $\mathcal S$ is a
separator for $\mathcal A$ the set $\mathcal S[I]=\{\varphi:I
\to A: \bigcurlywedge_{i \in I}\varphi(i) \in S\}$ is a separator for
$\mathcal A^I$ called the {\em uniform power separator} (see \cite[Section 4.2]{kn:newmiquel}).

\begin{defi}\label{defi:deltasubi} The implicative
  algebra defined as the implicative structure $\mathcal A^I$ endowed
  with the separator $\mathcal S[I]$ is denoted as $\mathcal A
  [I]$. \end{defi}

It is clear that the preorder $\vdash^{\mathcal S[I]}$ in $A^I$
associated to the separator $\mathcal S[I]$ is the following: for
$\varphi,\psi:I \to A$, $\varphi \vdash^{\mathcal S[I]} \psi$ if and
only if $\bigcurlywedge\{\varphi(i)\to \psi(i):i \in I\} \in \mathcal
S$.

\begin{defi} In $A^I$ the preorder $\vdash^{\mathcal S[I]}
  \subseteq A^I \times A^I$ will be denoted as
  $\vdash_I:=\vdash^{\mathcal S[I]}$ and called the entailment
  relation, or the entailment
  preorder.
\end{defi}

\begin{obse}\label{obse:delicatenotation} In
  the situation above, when it is cumbersome to write all the time in
  the formulae the description of the propositions $\varphi, \psi$ as
  maps, we omit it in the notations, and write $\varphi(i) \vdash_I
  \psi(i):=\varphi \vdash_I \psi$ (see for example Definition
  \ref{defi:morfimplalgsimpler} and Observation
  \ref{obse:uniform}). Explicitly $\varphi(i) \vdash_I \psi(i)$ if and
  only $\bigcurlywedge_{i \in I}\varphi(i) \to \psi(i) \in \mathcal
  S$.
\end{obse}

We consider for $I$ a fixed set the following maps:
a) $[I]$ as map from the class $\IA$ into itself as in Definition
\ref{defi:deltasubi}, i.e. $[I](\mathcal A):=\mathcal A[I]$; b)
$\mathcal H$ the construction of a Heyting preorder as described above
; c) ${\bf H}_{\mathcal A}(I)$ the composition of the two maps just
defined:
\[\xymatrix{\mathcal{I\!A} \ar[rd]_-{{\bf H}_{\mathcal
      A}(I)}\ar[rr]^{[I]}&&\mathcal{I\!A}\ar[ld]^-{\mathcal H}\\&{\bf
    HPO}&}\]

\begin{defi}\label{defi:indexedhpo} Let $\mathcal A$ be
  an implicative algebra and define the indexed HPO, ${\bf
    H}_{\mathcal A}:{\bf Set}^{\operatorname{op}}\to{\bf HPO}$ as follows:
\begin{enumerate} \item ${\bf H}_{\mathcal A}(I):=\mathcal H(\mathcal A[I])$, or more explicitly: ${\bf H}_{\mathcal
A}(I)=(A^I,\vdash_I,\wedge,\to)$; \item If $f:J\to I$, ${\bf
    H}_{\mathcal A}(f)$, that is customary to abbreviate it as ${\bf
    H}_{\mathcal A}(f)=f^*:{\bf H}_{\mathcal A}(I) \to {\bf
    H}_{\mathcal A}(J)$, is defined as $f^*:(A^I,\vdash_I)\to
  (A^J,\vdash_J),\,\, f^*( \varphi) = \varphi \circ f$, for $\varphi
  \in A^I$.
\end{enumerate}
\end{defi}

In what follows when there is
no danger of confusion we write $\varphi f$ instead of $\varphi \circ
f$ for the composition.

The following theorem is a restatement of 
\cite[Section 5, Theorem 5.8]{kn:ocar} in the language of IAs. In the present formulation for
IAs, appears in \cite[Theorem (Associated
  tripos)]{kn:implimiquel}.

\begin{theo}\label{theo:oca-to-islat} Let
  ${\mathcal A}= (A,\leq,\rimp,\mathcal S)$ be an implicative algebra,
  then ${\bf H}_{\mathcal A}$ is a tripos.
\end{theo}
\end{description}

\section{Morphisms of implicative algebras}
\label{section:appmorfisms} \item \label{item:appmorfisms} In a recent
publication van Oosten and Zou (see \cite{kn:OostenZou2016}) gave an
explicit characterization of Hofstra and van Oosten's morphisms
between ordered partial combinatory algebras OPCAs as presented in
\cite{kn:hofstra2003} and \cite{kn:hofstra2006}. Here we adapt these
ideas to the specific case of FOCAs i.e. implicative algebras.

\begin{defi}\label{defi:morfimplalgsimpler} 

Let $(A,\to,\mathcal S_{\mathcal A})$ y $(B,\to, \mathcal S_{\mathcal
  B})$ be two implicative algebras, an \emph{applicative morphism} is
a function $f:A\rightarrow B$ such that: \begin{enumerate} \item
  $f(\mathcal S_{\mathcal A})\subseteq \mathcal S_{\mathcal B}$; \item
  If $I=\{(a,a'): a \vdash a'\} \subseteq A \times A$ then
  $\varphi:I\to B$, $\psi:I\to B$ defined by $\varphi(a,a'):=f(a\to
  a')$, $\psi(a,a'):=f(a)\to f(a')$ satisfy the relation $\varphi
  \vdash_I \psi$ \item For all $P \subseteq A$ we have that
  $f(\bigcurlywedge_A P)=\bigcurlywedge_B f(P)$,
  i.e. $f\left(\bigcurlywedge_A\{x: x \in P\}\right)=\bigcurlywedge_B
  \{f(x): x \in P\}$. \end{enumerate}
\end{defi}

\begin{obse}\label{obse:uniform} Concerning the above definition the  following should be noticed. \begin{enumerate} \item It is well known
that if $f$ preserves binary meets, then it is monotonic, hence an
applicative morphism is monotonic; \item It is clear that (2) in the
Definition \ref{defi:morfimplalgsimpler} is equivalent to the
following assertion: there is an $r \in \mathcal S_{\mathcal B}$:
$\forall a,a' \in A$ such that $a \to a' \in \mathcal S_{\mathcal A}$,
then $r \leq f(a \to a') \to f(a) \to f(a')$. Equivalently, we can
write $f(a \to a')\vdash_I f(a)\to f(a')$ where $I$ is as above; \item
In the above condition (3) $f(P):= \mathcal P(f)(P)$ where $\mathcal
P(f)$ is the function induced by $f$ in $\mathcal P(A)$, this abuse of
notation will be frequent. \end{enumerate} \end{obse} We recall the
following result (see \cite[Lemma
  3.6]{kn:ocar}) that clearly is valid also for IAs as stated below.
\begin{lema}\label{lema:asocas} Let $\mathcal A$ be an implicative algebra with separator $\mathcal S$. Then there is an element $\nu \in \mathcal S$ such that $\forall\, a,b,c \in A\,,\,vabc\leq
  a(bc)$.
\end{lema}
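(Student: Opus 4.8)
The plan is to produce $\nu$ explicitly as the composition combinator $\mathbf B$ of combinatory logic, that is, to set $\nu := \cs(\ck\cs)\ck$, and then to check that it belongs to the filter and satisfies the stated inequality by a short reduction computation. That $\nu \in \Phi$ (resp. $\nu \in \mathcal S$) is immediate: since $\ck,\cs \in \Phi$ and $\Phi$ is closed under the application map, every element obtained from $\ck$ and $\cs$ by iterated application lies in $\Phi$, and $\cs(\ck\cs)\ck$ is such an element.

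The core of the proof is the evaluation of $\nu abc$, which I would present as a descending chain of inequalities obtained by alternately applying the two reduction laws $\ck xy \leq x$ and $\cs xyz \leq (xz)(yz)$ together with the monotonicity of application in both variables (Observation \ref{obse:initial}). Applying the $\cs$-law with $x=\ck\cs$, $y=\ck$, $z=a$ gives $\nu a \leq ((\ck\cs)a)(\ck a)$; since $(\ck\cs)a \leq \cs$ by the $\ck$-law, monotonicity yields $\nu a \leq \cs(\ck a)$. Appending $b$ and then $c$ and using monotonicity in the first argument, $\nu abc \leq \cs(\ck a)bc$. A second use of the $\cs$-law, now with $x=\ck a$, $y=b$, $z=c$, gives $\cs(\ck a)bc \leq ((\ck a)c)(bc)$, and the final reduction $(\ck a)c \leq a$ together with monotonicity produces
\[ \nu abc \leq ((\ck a)c)(bc) \leq a(bc), \]
which is exactly the required inequality.

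The single point demanding attention --- and the only place where this ordered setting departs from the familiar equational combinatory logic --- is that all the reductions are inequalities rather than equalities, so each one must be propagated through the application map by monotonicity, and the auxiliary arguments $b,c$ must be appended to partial inequalities in the correct order. Because the application is monotone in both variables this bookkeeping is routine, and I expect it to be the only, and quite minor, obstacle.
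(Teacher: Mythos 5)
Your proposal is correct: $\nu=\cs(\ck\cs)\ck$ lies in the filter (separator) by closure under application (for separators this closure is verified in the paper via $a\leq b\to ab$ and modus ponens), and your chain $\nu abc\leq \cs(\ck a)bc\leq ((\ck a)c)(bc)\leq a(bc)$, with each $\ck$- and $\cs$-reduction propagated through the monotone application map, is valid at every step. The paper gives no inline proof of Lemma \ref{lema:asocas}---it merely recalls it from Lemma 3.6 of \cite{kn:ocar}---and your explicit composition-combinator computation is precisely the standard argument behind that citation, so this is essentially the same approach.
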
 The following formulation of the second condition
of the definition of applicative morphism is sometimes convenient as
it is expressed solely in terms of the application map as it was
originally introduced in \cite{kn:OostenZou2016}.
\begin{lema} \label{lema:morfimplalgsimpler} In the situation of Definition \ref{defi:morfimplalgsimpler}, if $f:A \to B$ is a function that
satisfies conditions (1) and (3), it also satisfies condition (2) if
and only if the following condition holds: \[\exists t \in \mathcal
S_{\mathcal B}: \forall s\in \mathcal S_{\mathcal A}, a\in A,
tf(s)f(a)\leq f(sa).\quad\quad (2')\]
  
Moreover, if $\varphi,\psi:I \to A$ are such that $\varphi \vdash_I
\psi$, and $f:A \to B$ is an applicative morphism, then $f \circ
\varphi \vdash_I f \circ
\psi$. \end{lema}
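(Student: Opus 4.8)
The plan is to show that conditions (2) and $(2')$ are equivalent with a single common witness (the element realizing one may be taken to realize the other, $r=t$), and then deduce the ``Moreover'' clause from condition (2). Throughout I read $\Phi$ as the separator $\mathcal S_A$, and I use freely that $f$ is monotone (preserving binary meets forces monotonicity, as recorded in the Observation), that $\mathcal S_B$ is upward closed and closed under application, and that $\circ_\to$ is left adjoint to $\to$, i.e.\ \eqref{eqn:adj}, with unit and counit \eqref{eqn:unitcounit}. The bridge between the two formulations is the realizer reading of condition (2) from the Observation following Definition \ref{defi:morfimplalgsimpler}: condition (2) holds if and only if there is $r\in\mathcal S_B$ with $r\leq f(a\to a')\to f(a)\to f(a')$ whenever $a\to a'\in\mathcal S_A$, which by two applications of \eqref{eqn:adj} is the same as $rf(a\to a')f(a)\leq f(a')$.

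For $(2')\Rightarrow(2)$, I would fix $a,a'$ with $a\sse a'$ and set $s:=a\to a'\in\mathcal S_A$. Applying $(2')$ to this $s$ and to $a$ gives $tf(a\to a')f(a)=tf(s)f(a)\leq f(sa)=f((a\to a')a)$. The counit $(a\to a')a\leq a'$ of \eqref{eqn:unitcounit} together with monotonicity of $f$ yields $f((a\to a')a)\leq f(a')$, hence $tf(a\to a')f(a)\leq f(a')$; two uses of the adjunction \eqref{eqn:adj} turn this into $t\leq f(a\to a')\to f(a)\to f(a')$. So condition (2) holds with $r:=t$.

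For $(2)\Rightarrow(2')$, I would fix $s\in\mathcal S_A$ and $a\in A$. The unit $s\leq a\to sa$ and upward closure of $\mathcal S_A$ give $a\to sa\in\mathcal S_A$, i.e.\ $a\sse sa$. Feeding the pair $(a,sa)$ into the applied form of (2) gives $rf(a\to sa)f(a)\leq f(sa)$. Since $s\leq a\to sa$, monotonicity of $f$ gives $f(s)\leq f(a\to sa)$, and monotonicity of $\circ_\to$ in each variable then gives $rf(s)f(a)\leq rf(a\to sa)f(a)\leq f(sa)$, so $(2')$ holds with $t:=r$. Note that both sides are bracketed as $((\,\cdot\,)f(a))$, so the formulations match exactly and no appeal to the associator of Lemma \ref{lema:asocas} is needed. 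The ``main obstacle'' is precisely spotting these two instantiations: the counit instance $s=a\to a'$ in one direction and the unit instance $a\to sa$ in the other; the remaining adjunction bookkeeping is routine.

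For the ``Moreover'' clause, suppose $\varphi\sseu\psi$, so there is $s\in\mathcal S_A$ with $s\leq\varphi(x)\to\psi(x)$ for every $x\in X$; in particular $\varphi(x)\to\psi(x)\in\mathcal S_A$, i.e.\ $\varphi(x)\sse\psi(x)$ for each $x$. For each $x$ the realizer form of condition (2) gives $rf(\varphi(x)\to\psi(x))\leq f(\varphi(x))\to f(\psi(x))$, while $s\leq\varphi(x)\to\psi(x)$ and monotonicity of $f$ give $f(s)\leq f(\varphi(x)\to\psi(x))$; combining, $rf(s)\leq f(\varphi(x))\to f(\psi(x))$ for every $x$. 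Finally $r\in\mathcal S_B$ and $f(s)\in\mathcal S_B$ (the latter by condition (1)), and $\mathcal S_B$ is closed under application, so $rf(s)\in\mathcal S_B$ is a uniform realizer witnessing $f(\varphi(x))\sseu f(\psi(x))$.
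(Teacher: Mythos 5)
Your proof is correct and takes essentially the same route as the paper's: the counit instantiation $s:=a\to a'$ for $(2')\Rightarrow(2)$, the unit instantiation $a\to sa$ (with upward closure of the separator) for $(2)\Rightarrow(2')$, and a uniform realizer of the form $rf(s)\in\mathcal S_B$ (using condition (1) and closure of separators under application) for the ``Moreover'' clause. The one small divergence is that where the paper appeals to transitivity of $\sseu$ via the associator of Lemma \ref{lema:asocas}, you instead use monotonicity of the application map ($rf(s)\leq rf(a\to sa)$), which is a legitimate simplification that bypasses the combinator $\nu$ entirely.
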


\item \label{item:comupdense} Now we consider the notion of
  \emph{computationally dense} morphism. It turns out that these are
  the morphisms that give rise to geometric morphisms between
  the associated triposes (see \cite{kn:hofstra2003} and
  \cite{kn:OostenZou2016}). For a family of examples of
  computationally dense morphisms the reader can look at Theorem
  \ref{ejemplosdecompsense}.
  
\begin{defi}\label{defi:compdensemor} Let $\mathcal A, \mathcal B$ be implicative algebras with separators $\mathcal S_{\mathcal A}$ and
$\mathcal S_{\mathcal B}$. Let $f:A\rightarrow B$ be an applicative
  morphism. We say that $f$ is \emph{computationally dense} if there
  is a monotonic function $h:\mathcal S_{\mathcal B}\rightarrow
  \mathcal S_{\mathcal A}$, $f \circ h \vdash_{\mathcal S_{\mathcal
      B}} \operatorname{id}_{\mathcal S_{\mathcal B}}$. \end{defi}

      \begin{obse}\label{obse:compdensemor} \begin{enumerate} \item
    Notice that we have taken in the above definition $\mathcal
    S_{\mathcal B}$ as the index set but this is exactly what is
    needed as is explained below.
  \item In explicit terms,  the second condition of this
    definition reads as: $\exists t \in \mathcal S_{\mathcal B} :
    \forall b \in \mathcal S_{\mathcal B},\, tf(h(b)) \leq b$.
\item Even though in the definition the map $h$ is from $\mathcal
  S_{\mathcal B}$ into $\mathcal S_{\mathcal A}$, it is harmless to
  consider it as a map $h:B \to A$ with the additional property that
  $h(\mathcal S_{\mathcal B}) \subseteq \mathcal S_{\mathcal A}$. Of
  course, the monotonicity only holds when restricted to $\mathcal
  S_{\mathcal B}$.
  \end{enumerate}
\end{obse}
\begin{lema} \label{lema:catimpl}
  Let $\mathcal A \stackrel{f}{\rightarrow} \mathcal B
  \stackrel{g}{\rightarrow} \mathcal C$ be implicative algebras and
  $f\,,\,g$ monotonic maps between the underlying
  sets.
  \begin{enumerate} \item If $f$ and $g$ are applicative
    morphisms so is $g f: \mathcal A \to \mathcal C$.
\item If $f$ and $g$ are computationally dense morphisms so is $g f:
  \mathcal A \to \mathcal C$. \item The map
  $\operatorname{id}_\mathcal A:=\operatorname{id}_A:\mathcal A \to
  \mathcal A$ is a computationally dense
  morphism.
  \end{enumerate}
\end{lema}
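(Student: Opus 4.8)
The plan is to verify the three statements in turn, noting first that composition of set maps is associative and satisfies the identity laws automatically, so establishing (1)--(3) is precisely what is needed to confirm that implicative algebras with applicative (resp.\ computationally dense) morphisms assemble into the categories ${\bf IA}$ and ${\bf IAc}$.

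For (1), conditions (1) and (3) of Definition \ref{defi:morfimplalgsimpler} for $g\circ f$ are immediate: from $f(\mathcal S_A)\subseteq \mathcal S_B$ and $g(\mathcal S_B)\subseteq \mathcal S_C$ we get $g(f(\mathcal S_A))\subseteq \mathcal S_C$, and composing the two meet-preservation identities gives $(g\circ f)(\bigcurlywedge_A P)=\bigcurlywedge_C (g\circ f)(P)$. The substantive point is condition (2), which I would obtain by chaining $\sseu$-relations rather than by juggling explicit combinators. Fix the set $X=\{(a,a')\in A\times A: a\sse a'\}$ together with the functions $\varphi(a,a')=f(a\to a')$ and $\psi(a,a')=f(a)\to f(a')$. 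Condition (2) for $f$ says exactly that $\varphi \sseu \psi$ as functions $X\to B$. Applying $g$ through the last assertion of Lemma \ref{lema:morfimplalgsimpler} yields $g\circ\varphi \sseu g\circ\psi$, i.e.\ $g(f(a\to a')) \sseu g(f(a)\to f(a'))$. Next I would check that $f(a)\sse f(a')$ on $X$: for $(a,a')\in X$ we have $a\to a'\in\mathcal S_A$, whence $\varphi(a,a')=f(a\to a')\in\mathcal S_B$; the relation $\varphi\sseu\psi$ forces $\varphi(a,a')\to\psi(a,a')\in\mathcal S_B$, and modus ponens yields $\psi(a,a')=f(a)\to f(a')\in\mathcal S_B$, that is $f(a)\sse f(a')$. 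Condition (2) for $g$ then gives $g(f(a)\to f(a')) \sseu g(f(a))\to g(f(a'))$ uniformly in $(a,a')$, and transitivity of $\sseu$ (Observation \ref{obse:initial}) delivers $(g\circ f)(a\to a') \sseu (g\circ f)(a)\to (g\circ f)(a')$, which is condition (2) for $g\circ f$.

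For (2), assuming $f$ and $g$ computationally dense with monotone witnesses $h_f:\mathcal S_B\to\mathcal S_A$ and $h_g:\mathcal S_C\to\mathcal S_B$ satisfying $f h_f \vdash_{\mathcal S_B}\operatorname{id}$ and $g h_g \vdash_{\mathcal S_C}\operatorname{id}$, I would take $h:=h_f\circ h_g:\mathcal S_C\to\mathcal S_A$, monotone as a composite of monotone maps. By part (1), $g\circ f$ is already an applicative morphism, so only the density inequality remains. Instantiating $f h_f \vdash_{\mathcal S_B}\operatorname{id}$ at $b=h_g(c)$ gives $f(h_f(h_g(c))) \sseu h_g(c)$ uniformly in $c\in\mathcal S_C$; applying $g$ via Lemma \ref{lema:morfimplalgsimpler} gives $g(f(h_f(h_g(c)))) \sseu g(h_g(c))$, and composing with $g h_g \vdash_{\mathcal S_C}\operatorname{id}$ by transitivity of $\sseu$ yields $(g\circ f)(h(c)) \sseu c$, i.e.\ $(g\circ f)h\vdash_{\mathcal S_C}\operatorname{id}$, as required.

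For (3), the map $\operatorname{id}_A$ preserves $\mathcal S_A$ and all meets trivially, and condition (2) reads $a\to a' \sseu a\to a'$, which holds by reflexivity of $\sseu$; thus $\operatorname{id}_A$ is an applicative morphism, and taking $h=\operatorname{id}_{\mathcal S_A}$ (monotone) we get $\operatorname{id}_A(h(b))=b\sseu b$ again by reflexivity, so $\operatorname{id}_A$ is computationally dense. I expect the main obstacle to be the bookkeeping in part (1): ensuring that each link of the chain is realized \emph{uniformly}, and in particular verifying that $f(a)\sse f(a')$ on $X$ so that condition (2) for $g$ may legitimately be invoked. Once these points are secured, transitivity of $\sseu$ does the rest and no direct realizer computation is needed.
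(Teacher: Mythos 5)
Your proof is correct and takes essentially the same route as the paper's (deliberately terse) proof: the key move in both is pushing the $\sseu$-relations through $g$ via the last assertion of Lemma \ref{lema:morfimplalgsimpler}, and taking the composite $h_f\circ h_g$ as the density witness for $g\circ f$. You simply fill in the details the paper declares ``direct,'' most usefully the verification that $f(a)\sse f(a')$ on $X$ (via upward closure and modus ponens in $\mathcal S_B$) so that condition (2) for $g$ may legitimately be invoked, together with the uniformity bookkeeping at each link of the chain.
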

\begin{proof} The proof is
  direct even though some details must be filled in. For example, for
  the proof that for $a \vdash a'$, then $(gf)(a \to a')\vdash_I
  (gf)(a)\to (gf)(a')$ if we start with $f(a \to a')\vdash_I f(a)\to
  f(a')$ then apply $g$ and then use Lemma
  \ref{lema:morfimplalgsimpler}, we deduce that $g(f(a \to
  a'))\vdash_I g(f(a)\to f(a'))$. Once this is guaranteed it is clear
  that the rest of the proof of this assertion follows
  directly. Similarly, if $h, k$ are maps that satisfy the property of
  Definition \ref{defi:compdensemor} for $f$ and $g$ respectively, we
  prove by a direct application of Lemma
  \ref{lema:morfimplalgsimpler}, that $hk$ is a map that does the job
  for $gf$.
\end{proof}

In accordance with the results just proved,
the following definition is adequate.

\begin{defi}\label{defi:ralgcat1} We define the following
  categories.
  \begin{enumerate}
  \item ${\bf IA}$ that has
    implicative algebras as objects and applicative morphisms as
    arrows. 
  \item ${\bf IAc}$ that is the same as above
    but with computationally dense morphisms as
    arrows.
  \end{enumerate}
\end{defi}

\section{Morphisms of abstract Krivine structures}
\label{section:moraks}
\item The definition of the morphisms $\mathfrak f:\mathcal K \to
  \mathcal K'$ where $\mathcal K, \mathcal K' \in \mathcal{AKS}$, will
  be guided by the definitions introduced in Section
  \ref{section:appmorfisms}. A morphism $\mathfrak f$ will be a set
  theoretical function $f:\Pi\to \Pi'$ with the property that the
  induced map $\mathcal{P}(f):\mathcal{P}(\Pi)\rightarrow
  \mathcal{P}(\Pi') \,,\,\mathcal{P}(f)(P)=f(P)$, is a morphism of
  implicative algebras in the sense considered
  above. \begin{defi} \label{defi:moraks}Let $\mathcal
    K=(\Pi,\operatorname{push}, \bot\!\!\!\bot,
    \mathfrak{app},\mathrm{QP}, \cK,\cS)\,,\,\mathcal K'=(\Pi',
    \operatorname{push}', \bbot',\mathfrak{app}',
    \mathrm{QP}',\cK',\cS')\in
    \mathcal{AKS}$. \begin{enumerate} \item\label{defapplicativeaks}
      An \emph{applicative morphism} $\mathfrak f:\mathcal K \to
      \mathcal K'$ is a set theoretical function $f:\Pi\rightarrow
      \Pi'$ satisfying the following conditions valid for every $P,P'
      \subseteq \Pi$ (frequently we use the same notation for the
      morphism $\mathfrak f$ and the set theoretical function
      $f$): \begin{itemize} \item[a)]
\label{defapplicativeaksconda}If $\mathrm{QP} \cap {}^\perp P \neq \text{\O}$, then $\mathrm{QP'} \cap {}^\perp f(P) \neq \text{\O}$;
\item[b)]\label{defapplicativeakscondb} \[\mathrm{QP'} \cap
  \bigcap_{\{P,P': \mathrm{QP} \cap {}^\perp(P' \to P) \neq
    \text{\O}\}} {}^\perp\big(f(P'\to P) \to f(P')\to f(P)\big) \neq
  \text{\O}\] \end{itemize} \item \label{defdenseakscond}A
    \textit{computationally dense morphism} is an \emph{applicative
      morphism} $\mathfrak f:\mathcal K \to \mathcal K'$ ($f: \Pi \to
    \Pi' $) with the additional property that there exists a monotonic
    function $h:\mathcal{S}_{\mathcal K'}\rightarrow
    \mathcal{S}_{\mathcal K}$ where $\mathcal{S}_{\mathcal
      K}=\{R\subseteq\Pi:\mathrm{QP} \cap {}^\perp R \neq
    \text{\O}\}$ and $\mathcal{S}_{\mathcal
      K'}=\{R\subseteq\Pi':\mathrm{QP}' \cap {}^\perp R \neq
    \text{\O}\}$ (see paragraph \ref{item:akstoia},\textbf{ From
      abstract Krivine structures to implicative algebras},
    \eqref{item:eseka}) satisfying: \[\mathrm{QP'} \cap \bigcap_{\{R:
      \mathrm{QP}' \cap {}^\perp R \neq
      \text{\O}\}}{}^\perp\big(\mathcal P(f)(h(R)) \to R \big) \neq
    \text{\O}\] \end{enumerate} \end{defi} \begin{obse} We abbreviate
  the above conditions as follows. \begin{enumerate} \item
\begin{itemize} \item[a)] If $P \in \mathcal S_\mathcal K$ then $f(P) \in \mathcal S_{{\mathcal K}'}$; \item[b)] \[\bigcup_{\{P,P': P' \to P
\in \mathcal S_{\mathcal K}\}} \big(f(P'\to P) \to f(P')\to f(P)\big)
  \in \mathcal S_{\mathcal K'};\] \end{itemize} \item There exists a
monotonic function $h:\mathcal{S}_{\mathcal K'} \rightarrow
\mathcal{S}_{\mathcal K}$ satisfying: \[\bigcup_{R \in \mathcal
  S_{\mathcal K'}} \big(\mathcal P(f)(h(R)) \to R \big) \in \mathcal
S_{\mathcal
  K'}.\] \end{enumerate} \end{obse} \begin{lema}\label{lema:cataks}
  Let $\mathcal K \stackrel{}{\rightarrow} \mathcal L
  \stackrel{}{\rightarrow} \mathcal M$ be abstract Krivine structures
  based on the sets $\Pi_1\,,\,\Pi_2$ and $\Pi_3$ respectively and let
  $f:\Pi_1 \to \Pi_2$ and $g:\Pi_2 \to \Pi_3$ be set theoretical
  functions.
\begin{enumerate} \item If $f$ and $g$ correspond to applicative morphisms $\mathfrak f$ and $\mathfrak g$ so does $gf$. We define the
morphism $\mathfrak g \mathfrak f$ as corresponding to $gf$. \item If
$\mathfrak f$ and $\mathfrak g$ are computationally dense morphisms so
is $\mathfrak g \mathfrak f: \mathcal K \to \mathcal M$. \item The map
$\operatorname{id}_\mathcal K:=\operatorname{id}_\Pi:\mathcal K \to
\mathcal K$ is a computationally dense
morphism. \end{enumerate} \end{lema}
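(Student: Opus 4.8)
The plan is to reduce the entire statement to the already-established Lemma \ref{lema:catimpl}, exploiting the fact that Definition \ref{defi:moraks} was crafted precisely so that a morphism of $\mathcal{AKS}$s is nothing but a morphism of the associated implicative algebras $A(-)$. Concretely, the whole argument rests on the dictionary announced in the paragraph preceding Definition \ref{defi:moraks}: a set-theoretical function $f:\Pi\to\Pi'$ is an applicative (resp.\ computationally dense) morphism $\mathfrak f:\mathcal K\to\mathcal K'$ if and only if the induced direct-image map $\mathcal P(f):A(\mathcal K)\to A(\mathcal K')$ is an applicative (resp.\ computationally dense) morphism of implicative algebras.

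The first step is to verify this dictionary carefully. Condition a) of Definition \ref{defi:moraks} says exactly $\mathcal P(f)(\mathcal S_{A(\mathcal K)})\subseteq \mathcal S_{A(\mathcal K')}$, which is condition (1) of Definition \ref{defi:morfimplalgsimpler}. For condition b), I would use that the order on $A(\mathcal K')$ is reverse inclusion, so the meet $\bigcurlywedge$ is union, and that ${}^\perp$ converts unions into intersections, ${}^\perp(\bigcup_i R_i)=\bigcap_i {}^\perp R_i$. Hence the displayed intersection in b) is precisely the assertion that $\bigcurlywedge_{\{P',P:\,P'\sse P\}}\big(\mathcal P(f)(P'\to P)\to(\mathcal P(f)(P')\to \mathcal P(f)(P))\big)$ lies in $\mathcal S_{A(\mathcal K')}$, i.e.\ condition (2) of Definition \ref{defi:morfimplalgsimpler} read through the uniform preorder $\sseu$, since the indexing set $\{(P',P):\mathrm{QP}\cap{}^\perp(P'\to P)\neq\emptyset\}$ is exactly the set of pairs realizing $P'\sse P$ in $A(\mathcal K)$. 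The remaining condition (3) of Definition \ref{defi:morfimplalgsimpler}, preservation of arbitrary meets, is automatic: meets in $A(\mathcal K)$ are unions and direct images commute with unions, so $\mathcal P(f)(\bigcup_i P_i)=\bigcup_i \mathcal P(f)(P_i)$. An entirely parallel reading shows that the density clause of Definition \ref{defi:moraks}(2) is exactly the defining condition of a computationally dense morphism (Definition \ref{defi:compdensemor}) for $\mathcal P(f)$, with the very same $h$.

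The second step is the functoriality of the direct-image assignment, $\mathcal P(gf)=\mathcal P(g)\circ\mathcal P(f)$ and $\mathcal P(\operatorname{id}_\Pi)=\operatorname{id}_{\mathcal P(\Pi)}$, together with the remark that each $\mathcal P(f)$ is automatically monotone for reverse inclusion (since $Q\subseteq P$ gives $f(Q)\subseteq f(P)$), so the monotonicity hypotheses of Lemma \ref{lema:catimpl} hold for free. With the dictionary and functoriality in place, the three assertions follow immediately: for (1), if $\mathfrak f,\mathfrak g$ are applicative then $\mathcal P(f),\mathcal P(g)$ are applicative morphisms of implicative algebras, so by Lemma \ref{lema:catimpl}(1) the composite $\mathcal P(g)\circ\mathcal P(f)=\mathcal P(gf)$ is applicative, which by the dictionary says $\mathfrak g\mathfrak f$ is applicative; assertion (2) is identical using Lemma \ref{lema:catimpl}(2); and (3) follows from Lemma \ref{lema:catimpl}(3) together with $\mathcal P(\operatorname{id}_\Pi)=\operatorname{id}$.

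The main obstacle is the bookkeeping of the first step: one must check that the indexing sets appearing in conditions a), b) and in the density clause genuinely correspond to the separators $\mathcal S_{A(\mathcal K)}$, $\mathcal S_{A(\mathcal K')}$ and to the pairs realizing $P'\sse P$, and that the identity ${}^\perp(\bigcup)=\bigcap{}^\perp$ indeed transports the intersection form of b) into the separator-membership form demanded by $\sseu$. Once this dictionary is set up correctly no genuine computation remains, since all the combinatorial content has already been absorbed into Lemma \ref{lema:catimpl}.
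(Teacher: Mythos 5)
Your proposal is correct, but it takes a genuinely different route from the paper. The paper's own proof of Lemma \ref{lema:cataks} is a one-line remark that everything ``follows from a direct manipulation of the definitions,'' i.e.\ it implicitly intends a direct verification at the level of quasi-proofs and orthogonality (composing the realizers in $\mathrm{QP}'$, $\mathrm{QP}''$ witnessing conditions a), b) and the density clause). You instead make rigorous the dictionary announced in the paragraph preceding Definition \ref{defi:moraks} --- that $f$ is an applicative (resp.\ computationally dense) morphism of $\mathcal{AKS}$s \emph{if and only if} $\mathcal P(f)$ is one of implicative algebras --- and then transport the whole lemma through $A(-)$ to the already-proved Lemma \ref{lema:catimpl}, using $\mathcal P(gf)=\mathcal P(g)\circ\mathcal P(f)$. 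Your bookkeeping is sound: since meets in $A(\mathcal K')$ are unions under reverse inclusion and ${}^\perp\big(\bigcup_i R_i\big)=\bigcap_i{}^\perp R_i$, the displayed intersection in condition b) is precisely membership of $\bigcurlywedge_{\{(P',P):\,P'\sse P\}}\big(f(P'\to P)\to f(P')\to f(P)\big)$ in $\mathcal S_{A(\mathcal K')}$, i.e.\ the $\sseu$-form of condition (2) of Definition \ref{defi:morfimplalgsimpler}; condition (3) is automatic because direct images commute with unions; and the density clauses match with the very same $h$. Note that the forward half of your dictionary is exactly what the paper proves later as Proposition \ref{prop:aplmor}, so your argument front-loads that proposition and additionally records its converse; there is no circularity, since neither Lemma \ref{lema:catimpl} nor Proposition \ref{prop:aplmor} depends on Lemma \ref{lema:cataks}. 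What each approach buys: the paper's direct check avoids any appeal to the equivalence but duplicates the combinatorial work already done for implicative algebras; your reduction isolates all that work in Lemma \ref{lema:catimpl} and makes the design principle behind Definition \ref{defi:moraks} explicit, at the modest cost of verifying that the translation is a biconditional rather than a one-way implication --- which your use of the identity ${}^\perp(\bigcup)=\bigcap{}^\perp$ handles correctly.
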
 \begin{proof} The proof follows
    from direct manipulation of the definitions. The first condition
      (1) a) of Definition \ref{defi:moraks} for the composition
      $\mathfrak g \mathfrak f$ is clear from the corresponding
      conditions for the components. Conditions (1) b) follow from the
      fact that it is just the translation of the corresponding
      condition for implicative algebras. Conditions (2) follow by the
      same reasons. \end{proof}

In accordance with the results just proved, the following definition
is adequate.

\begin{defi}\label{defi:ralgcat2} We define the following
  categories. \begin{enumerate} \item ${\bf AKS}$ that has abstract
    Krivine structures as objects and applicative morphisms as arrows.
     \item ${\bf AKSc}$ with
    the same objects and computationally dense morphisms
    as arrows.
\end{enumerate} \end{defi}

\section{Functoriality of the constructions}\label{section:functoriality} \item\label{item:functoriality} In what follows we prove that the
constructions $A$ and $K$ are in fact functors in the corresponding
categories.

Consider $\mathcal K,\mathcal K' \in \mathcal {AKS}$s and the
associated implicative algebras $(A,\leq,\rightarrow, \mathcal
{S}_A):=A(\mathcal K)$ and $(B,\leq,\rightarrow, \mathcal S_{\mathcal
  B}):=A(\mathcal
K')$.\begin{prop}\label{prop:aplmor}\label{prop:compdensemor} If
  $\mathfrak f:\mathcal K\rightarrow \mathcal K'$ is an applicative
  morphism between the AKSes ($f:\Pi \to \Pi'$), then
  $\mathcal{P}(f):\mathcal P(\Pi) \to \mathcal P(\Pi')$ is an
  applicative morphism between the corresponding implicative algebras
  $A(\mathcal K), A(\mathcal K')$. Moreover if $\mathfrak f$ ($f:\Pi
  \to \Pi'$) is computationally dense, so is $\mathcal
  P(f)$. \end{prop}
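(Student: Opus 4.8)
The plan is to verify directly that $\mathcal P(f)$ satisfies the three conditions of Definition \ref{defi:morfimplalgsimpler}, translating each through the explicit description of the implicative algebras $A(\mathcal K)$ and $A(\mathcal K')$ recorded in Example \ref{example:main}. Recall the dictionary: the base sets are $\mathcal P(\Pi)$ and $\mathcal P(\Pi')$ ordered by reverse inclusion, so that meets are unions, $\bigcurlywedge_i P_i = \bigcup_i P_i$; the separators are $\mathcal S_A = \{P : \mathrm{QP} \cap {}^\perp P \neq \emptyset\}$ and likewise $\mathcal S_B$; and $\mathcal P(f)$ is the direct image $P \mapsto f(P)$. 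The one algebraic fact I will use repeatedly is that taking left orthogonals converts unions into intersections: ${}^\perp\big(\bigcup_i Q_i\big) = \bigcap_i {}^\perp Q_i$.

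Conditions (1) and (3) are immediate. Condition (1), $\mathcal P(f)(\mathcal S_A) \subseteq \mathcal S_B$, says precisely that $\mathrm{QP} \cap {}^\perp P \neq \emptyset$ implies $\mathrm{QP}' \cap {}^\perp f(P) \neq \emptyset$, which is verbatim clause a) of Definition \ref{defi:moraks}. Condition (3), preservation of arbitrary meets, unwinds to the purely set-theoretic identity $f\big(\bigcup_i P_i\big) = \bigcup_i f(P_i)$ for the direct image of a family of subsets of $\Pi$, so nothing is required beyond computing meets as unions.

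The substance of the first assertion is condition (2), and this is where the orthogonality identity does the work. Following the observation after Definition \ref{defi:morfimplalgsimpler}, condition (2) asks for a single $r \in \mathcal S_B$ with $r \leq f(P'\to P) \to f(P') \to f(P)$ for every pair with $P' \to P \in \mathcal S_A$; in reverse-inclusion order this requires $r$ to contain each such set. I will take $r$ to be the least such element, namely the union (= meet) $r = \bigcup_{\{P,P'\,:\,\mathrm{QP}\cap{}^\perp(P'\to P)\neq\emptyset\}}\big(f(P'\to P) \to f(P')\to f(P)\big)$, so that the family of inequalities holds by construction. It then remains to check $r \in \mathcal S_B$, i.e. $\mathrm{QP}' \cap {}^\perp r \neq \emptyset$; but by the orthogonality identity ${}^\perp r$ is exactly the intersection $\bigcap {}^\perp\big(f(P'\to P) \to f(P')\to f(P)\big)$ appearing in clause b) of Definition \ref{defi:moraks}, so $\mathrm{QP}' \cap {}^\perp r$ is nonempty by that clause. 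The main point to get right is the bookkeeping: one must check that the index set $X = \{(P',P) : \mathrm{QP} \cap {}^\perp(P'\to P) \neq \emptyset\}$ used to form the uniform realizer coincides with the index set of clause b), and that a \emph{single} element of $\mathrm{QP}'$ witnesses the whole intersection — which is exactly what b) provides, yielding the uniform ($\sseu$) rather than merely pointwise inequality.

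For the computationally dense assertion, the same monotone map $h : \mathcal S_{A(\mathcal K')} \to \mathcal S_{A(\mathcal K)}$ furnished by Definition \ref{defi:moraks}(2) serves as the witness required by Definition \ref{defi:compdensemor}, since $\mathcal S_{A(\mathcal K)} = \mathcal S_A$, $\mathcal S_{A(\mathcal K')} = \mathcal S_B$, and monotonicity is the same demand in both settings. The condition $\mathcal P(f)\, h \vdash_{S_B} \operatorname{id}$ means $\bigcurlywedge_{R \in \mathcal S_B}\big(\mathcal P(f)(h(R)) \to R\big) \in \mathcal S_B$; computing the meet as a union and applying the orthogonality identity once more turns membership in $\mathcal S_B$ into $\mathrm{QP}' \cap \bigcap_{\{R:\mathrm{QP}'\cap{}^\perp R\neq\emptyset\}} {}^\perp\big(\mathcal P(f)(h(R)) \to R\big) \neq \emptyset$, which is precisely the inequality imposed in Definition \ref{defi:moraks}(2). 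Thus both parts reduce, via meets-are-unions and ${}^\perp(\bigcup) = \bigcap {}^\perp$, to the defining clauses of the $\mathcal{AKS}$ morphism, and no genuinely new estimate is needed.
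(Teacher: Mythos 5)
Your proof is correct and takes essentially the same route as the paper's: conditions (1) and (3) match clause a) of Definition \ref{defi:moraks} and preservation of unions verbatim, and both condition (2) and the density claim reduce to clause b) and to Definition \ref{defi:moraks}(2) respectively, using the identity ${}^\perp\bigl(\bigcup_i Q_i\bigr)=\bigcap_i{}^\perp Q_i$. The only (immaterial) difference is the choice of uniform realizer in condition (2): you take the meet $\bigcup\bigl(f(P'\to P)\to f(P')\to f(P)\bigr)$ itself, whereas the paper takes $\{r\}^\perp$ for the single witness $r\in\mathrm{QP}'$ supplied by clause b); both choices verify membership in $\mathcal S_B$ by the same orthogonality computation.
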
 \begin{proof} We need to check that in our
  context, the three conditions of Definition
  \ref{defi:morfimplalgsimpler} hold. \begin{enumerate} \item We start
    proving that $\mathcal{P}(f)(\mathcal {S}_{\mathcal A})\subseteq
    {\mathcal S_{\mathcal B}}$. We know that $P \in {\mathcal
      S}_{\mathcal A}$ if and only if $\exists \, t \bot P, \,t \in
    \mathrm{QP}$ and condition (1) a) of Definition \ref{defi:moraks}
    above, implies that $\exists s \in \mathrm{QP'}$ such that $s \bot
    f(P)$ and then $f(P) \in {\mathcal S}_{\mathcal B}$.
\item The statement of condition (2) is the following, there is an
  element $R \in {\mathcal S}_{\mathcal B}$ such that $\forall P,Q \in
  \mathcal P(\Pi)=A$ such that $P \to Q \in {\mathcal S}_{\mathcal
    A}$, then $R \supseteq f(P \to Q) \to f(P) \to f(Q)$. It is clear
  that from the hypothesis (condition (1) b) of Definition
  \ref{defi:moraks}) we can deduce the existence of an element $r \in
  \mathrm{QP}'$ such that $r \in {}^\perp(f(P \to Q)\to f(P) \to
  f(Q))$. This implies that the set $\{r\}^\perp \in {\mathcal
    S}_{\mathcal B}$ does the job required for $R$ as this relation
  implies that $\{r\}^\perp \supseteq f(P \to Q) \to f(P)\to
  f(Q)$. \item Assume that $X \subseteq \mathcal P(\Pi)$ and take
  $\bigcurlywedge X =\bigcup \{P: P \in X\}$. Then $f(\bigcurlywedge
  X)=f(\bigcup \{P: P \in X\})=\bigcup\{f(P): P \in X\}=
  \bigcurlywedge\{f(P): P \in X\}$. \end{enumerate} For the proof
  related to the computational density of the image morphism
  $A(\mathfrak f)$ we proceed as follows. The fact that $\mathfrak f$
  is computationally dense guarantees the existence of a monotonic map
  $h: \mathcal S_{\mathcal B}\to \mathcal S_{\mathcal A}$ such
  that: \begin{equation}\label{eqn:compdense}\mathrm{QP'} \cap
    \bigcap_{R \in \mathcal S_{\mathcal B}} {}^\perp\big(\mathcal
    P(f)(h(R)) \to R \big) \neq \text{\O}. \end{equation} The
  condition of computational density at the level of the $IA$s is the
  existence of a map similar to the $h$ above except that the required
  condition is that $\forall P \in \mathcal S_{\mathcal B}$ the
  following relation holds: $\mathcal P(f) \circ h\vdash_{\mathcal
    S_{\mathcal B}} \operatorname{id}_{\mathcal S_{\mathcal B}}$. It is clear that
  this relation and equation \eqref{eqn:compdense} are the
  same. \end{proof}

The results of Proposition \ref{prop:aplmor} together with the
previous definitions and constructions guarantee the result
that follows.
\begin{theo} \label{thm:main1}The map $A: \mathcal {AKS} \to \IA$ extends to: \begin{enumerate} \item A functor $A: {\bf AKS} \to {\bf IA}$
when we take applicative morphisms as arrows; \item A functor also
called $A: {\bf AKSc} \to {\bf IAc}$ when we take the respectively
lluf (wide) subcategories with morphisms that are computationally
dense. \end{enumerate} \end{theo} Now, we deal with the construction
$K$ --given at the level of objects in Paragraph \ref{item:main2}--
and prove that it can be extended to a functor in the corresponding
categories. \begin{prop}\label{prop:mainadj} If $f: \mathcal A
  \rightarrow \mathcal B$ is an applicative morphism between
  implicative algebras, then $K(f):=f: K(\mathcal A)\to K(\mathcal B)$
  is an applicative morphism between the corresponding
  AKSes. Moreover, if $f:A \to B$ is computationally dense, so is
  $K(f)$.\end{prop}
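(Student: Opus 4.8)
The plan is to read the statement off the very design of morphisms of $\mathcal{AKS}$s. By the discussion preceding Definition~\ref{defi:moraks}, the function $K(f)=f$ is an applicative (resp.\ computationally dense) morphism $K(\mathcal A)\to K(\mathcal B)$ exactly when the induced map $\mathcal P(f):A(K(\mathcal A))\to A(K(\mathcal B))$ is an applicative (resp.\ computationally dense) morphism of implicative algebras: clause (a) of Definition~\ref{defi:moraks} is condition (1) for $\mathcal P(f)$, clause (b) is the uniform reading of condition (2), the computational density clause is literally the one of Definition~\ref{defi:compdensemor}, and condition (3) is automatic since $\mathcal P(f)$ carries unions to unions and the meet in $A(K(-))$ is the union (Example~\ref{example:main}). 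So everything reduces to relating conditions (1), (2) and the density clause for $\mathcal P(f)$ to the corresponding conditions for $f$.

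To do this I would first record the three facts that translate the structure of $A(K(\mathcal A))$ back into $\mathcal A$. For $R\subseteq A$ one has ${}^\perp R=\{t: t\leq \bigcurlywedge R\}$, so by upward closure of separators $R\in \mathcal S_{A(K(\mathcal A))}$ if and only if $\bigcurlywedge R\in \mathcal S_A$; by Lemma~\ref{lema:equalinf} the $\mathcal{AKS}$--implication satisfies $\bigcurlywedge(R'\to R)=\bigcurlywedge R' \to_A \bigcurlywedge R$; and by condition (3) for $f$ one has $\bigcurlywedge \mathcal P(f)(R)=f(\bigcurlywedge R)$. Conceptually these say that $R\mapsto \bigcurlywedge R$ is a map $A(K(\mathcal A))\to\mathcal A$ intertwining $\mathcal P(f)$ with $f$. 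Condition (1) for $\mathcal P(f)$ is then immediate: if $\bigcurlywedge R\in \mathcal S_A$ then $\bigcurlywedge \mathcal P(f)(R)=f(\bigcurlywedge R)\in f(\mathcal S_A)\subseteq \mathcal S_B$.

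For condition (2) I would compute the meet of the iterated implication appearing in clause (b). Writing $a=\bigcurlywedge P'$ and $a'=\bigcurlywedge P$, repeated use of Lemma~\ref{lema:equalinf} together with $\bigcurlywedge \mathcal P(f)(R)=f(\bigcurlywedge R)$ gives
\[
\bigcurlywedge\big(f(P'\to P)\to f(P')\to f(P)\big)=f(a\to_A a')\to_B\big(f(a)\to_B f(a')\big),
\]
while the indexing constraint $\mathrm{QP}\cap {}^\perp(P'\to P)\neq\emptyset$ becomes $a\to_A a'\in \mathcal S_A$, i.e.\ $a\sse a'$. Thus the quantifier over subsets $P,P'$ collapses to one over pairs of elements $a\sse a'$, and clause (b) asserts exactly that $\bigcurlywedge_{a\sse a'}\big(f(a\to a')\to (f(a)\to f(a'))\big)\in \mathcal S_B$, which is the uniform form of condition (2) for $f$, namely $f(a\to a')\sseu f(a)\to f(a')$ whenever $a\sse a'$. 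This settles the first assertion.

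For computational density, given the monotone witness $h:\mathcal S_B\to \mathcal S_A$ of $f$, I would define $H(R)=\{x\in A: h(\bigcurlywedge R)\leq x\}$, the principal up--set of $h(\bigcurlywedge R)$. Then $\bigcurlywedge H(R)=h(\bigcurlywedge R)$, so $H$ lands in $\mathcal S_{A(K(\mathcal A))}$ and is monotone for the reverse inclusion orders, since $R\mapsto \bigcurlywedge R$, $h$ and the passage to the up--set all are monotone. With this $H$ the same meet computation yields $\bigcurlywedge\big(\mathcal P(f)(H(R))\to R\big)=f(h(b))\to_B b$ with $b=\bigcurlywedge R\in \mathcal S_B$, so the density condition for $\mathcal P(f)$ reduces to $\bigcurlywedge_{b\in \mathcal S_B}\big(f(h(b))\to b\big)\in \mathcal S_B$, which is precisely the computational density of $f$. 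The one genuinely delicate point is the choice of the lift $H$: the naive singleton $R\mapsto\{h(\bigcurlywedge R)\}$ has the correct meet but is not monotone, whereas replacing it by the up--set restores monotonicity without changing the meet; everything else is bookkeeping of iterated $\mathcal{AKS}$--implications through Lemma~\ref{lema:equalinf}.
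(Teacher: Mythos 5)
Your proof is correct, and it shares the paper's overall strategy --- reduce each clause of Definition \ref{defi:moraks} for $K(f)$ to the corresponding uniform condition on $f$ by collapsing subsets of $A$ and $B$ to their infima --- but it differs at two genuine tactical points. For clause (b), the paper does not compute the infimum of $f(P\to Q)\to (f(P)\to f(Q))$ exactly: it introduces the auxiliary set $X=\{f(\inf P\to v)\to (f(\inf P)\to f(v)):v\in Q\}$, shows every element of the target set dominates an element of $X$, and applies the uniform realizer of condition (2) to the pairs $(\inf P,v)$ with $v$ ranging over $Q$. Your iterated use of Lemma \ref{lema:equalinf} together with $f(\bigcurlywedge R)=\bigcurlywedge f(R)$ collapses the whole set to the single inequality $r\leq f(a\to a')\to (f(a)\to f(a'))$ with $a=\bigcurlywedge P'$, $a'=\bigcurlywedge P$, which is cleaner and in addition shows that clause (b) is exactly equivalent to (not merely implied by) the uniform condition (2) for $f$, since every pair $a\sse a'$ arises from singletons; the paper's Proposition \ref{prop:aplmor} only records one direction of this correspondence, but your argument verifies the $\mathcal{AKS}$ clauses directly, so the ``exactly when'' framing is only motivational and carries no hidden burden. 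For computational density the two witnesses genuinely differ: the paper lifts $h$ to the direct image $\widehat h(R)=\{h(p):p\in R\}$, which is monotone but requires the observation that $R\in\mathcal S_{A(K(\mathcal B))}$ forces $R\subseteq \mathcal S_B$ (upward closure of separators), both for well-definedness and for the final estimate $a\leq f(h(p))\to p\leq q\to p$; your principal up-set $H(R)={\uparrow}h(\bigcurlywedge R)$ invokes $h$ only at $\bigcurlywedge R$ and turns the final check into the one-line identity $\bigcurlywedge\big(\mathcal P(f)(H(R))\to R\big)=f(h(\bigcurlywedge R))\to \bigcurlywedge R$, after which the uniform realizer for $f$ applies verbatim. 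Your caveat about the failure of monotonicity for the naive singleton lift is well taken --- that is precisely the pitfall the paper's image construction also avoids, by a different route.
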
 \begin{proof} Call $A$ and $B$ the corresponding
  underlying sets of $K(\mathcal A)$ and $K(\mathcal B)$
  respectively. As the characterization of the morphisms at the level
  of AKSes is given in terms of conditions in $\mathcal P(\Pi)$, in
  the case of AKSes of the form $K(\mathcal A)$ we have to consider
  the application and implication at the level of $\mathcal P(A)$. To
  avoid notational confusions we indicate the maps at the level of $A$
  as $\to$ and $\cdot$ and at the level of $\mathcal P(A)$ with the
  symbols but with a subindex: $\to_{\mathcal P}$ and $\cdot_{\mathcal
    P}$. \begin{itemize} \item[$\ast$] The first condition in the
    definition of an applicative morphism of AKSes (Definition
    \ref{defi:moraks}) in our context reads as: if there is a $t\leq
    P$ with $t\in \mathcal S_{\mathcal A}$ then there is a $t' \leq
    f(P)$ with $t'\in \mathcal S_{\mathcal B}$. Clearly, the element
    $t'=f(t)$ does the job because $f$ is monotonic and sends
    $\mathcal S_{\mathcal A}$ into $\mathcal S_{\mathcal B}$. For the
    proof of the second condition, we have that
    according to the hypothesis there is an $r\in \mathcal S_{\mathcal
      B}$ such that $r\leq f(a\rightarrow b)\rightarrow
    f(a)\rightarrow f(b)$ for every $a\rightarrow b\in \mathcal
    S_{\mathcal A}$.

Since $K(f)=f$, we need to find an $r\in \mathcal S_{\mathcal B}$ such
that: $$r\leq f(P\rightarrow_{\mathcal P} Q)\rightarrow_{\mathcal P}
f(P)\rightarrow_{\mathcal P} f(Q)$$ for every $P\rightarrow_{\mathcal
  P} Q$ with $\mathcal S_{\mathcal B}\cap
{}^\perp(P\rightarrow_{\mathcal P} Q)\neq\varnothing$ where
$P\rightarrow_{\mathcal P} Q={}^\perp P\cdot Q$. We have that
$f(P\rightarrow_{\mathcal P} Q)\rightarrow_{\mathcal P}
f(P)\rightarrow_{\mathcal P} f(Q)=\{\alpha\rightarrow\beta: \alpha\leq
f(z\rightarrow w), \forall z\leq P, \forall w\in Q,\,
\beta=x\rightarrow y,  x\leq f(P), y\in f(Q)\}$. We
define a subset $X_{P,Q} \subseteq B$ such that for every $u\in
f(P\rightarrow_{\mathcal P} Q)\rightarrow_{\mathcal P}
f(P)\rightarrow_{\mathcal P} f(Q)$ there is an element $v\in X_{P,Q}$
such that $v\leq u$. We finish the proof by showing the existence a
uniform $r\in\mathcal S_{\mathcal B}$ such that $r\leq
X_{P,Q}$. Consider the set given by
$X_{P,Q}=\{f(\operatorname{inf}(P)\rightarrow v)\rightarrow
f(\operatorname{inf}(P))\rightarrow f(v): v\in Q\}$. If
$\alpha\rightarrow\beta\in f(P\rightarrow_{\mathcal P}
Q)\rightarrow_{\mathcal P} f(P)\rightarrow_{\mathcal P} f(Q)$ then
$\beta=x_0\rightarrow y_0$ where $y_0=f(v_0)$ for some $v_0\in
Q$. Since $x_0\leq f(P)$ then $x_0\leq
\operatorname{inf}(f(P))=f(\operatorname{inf}(P))$ ($f$ preserves
$\operatorname{infima}$). On the other hand, $\alpha\leq
f(\operatorname{inf}(P)\rightarrow v_0)$ since
$\operatorname{inf}(P)\leq P$ and $v_0\in Q$. Thus we have $\alpha\leq
f(\operatorname{inf}(P)\rightarrow v_0)$ and
$f(\operatorname{inf}(P))\rightarrow f(v_0)\leq x_0\rightarrow
y_0=\beta$. Therefore, $f(\operatorname{inf}(P)\rightarrow
v_0)\rightarrow f(\operatorname{inf}(P))\rightarrow
f(v_0)\leq\alpha\rightarrow \beta.$ Since $f$ is an applicative
morphism between implicative algebras we have that $r\leq
f(\operatorname{inf}(P)\rightarrow q)\rightarrow
(f(\operatorname{inf}(P))\rightarrow f(q))$ for every $P$ and $ q\in
Q$ such that $\mathcal S_{\mathcal B}\cap
{}^\perp(P\rightarrow_{\mathcal P} Q)\neq\varnothing$. For this,
$\mathcal S_{\mathcal B}\cap {}^\perp(P\rightarrow_{\mathcal P}
Q)\neq\varnothing$ implies that $r\rightarrow q\in\mathcal S_{\mathcal
  A}$ for every $r\leq P$ and $q\in Q$. In particular
$\operatorname{inf}(P)\rightarrow q\in \mathcal S_{\mathcal
  A}$. \item[$\ast$] Next we deal with the density condition proving
that if $f$ is computationally dense, so is $K(f)$. Assume the
hypothesis holds: there is a monotonic function $h:\mathcal
S_{\mathcal B}\rightarrow \mathcal S_{\mathcal A}$ such that
$f \circ h\vdash_{\mathcal S_{\mathcal
    B}} $. We need to find a function
$\widehat{h}:\mathcal{S}_{A(K(\mathcal{B}))} \rightarrow
\mathcal{S}_{A(K(\mathcal{A}))}$ (being
$\mathcal{S}_{A(K(\mathcal{A}))}=\{R\subseteq A: \mathcal{S}_{\mathcal
  A}\cap {}^\perp R \neq \text{\O}\}$ and
$\mathcal{S}_{A(K(\mathcal{B}))}=\{R\subseteq B:\mathcal{S}_{\mathcal
  B} \cap {}^\perp R \neq \text{\O}\}$)
satisfying:\[\mathcal{S}_{\mathcal B} \cap \bigcap_{\{R:
  \mathcal{S}_{\mathcal B} \cap {}^\perp R \neq \text{\O}\}}
{}^\perp\big(f(\widehat{h}(R))\to_{\mathcal P} R \big) \neq
\text{\O}\] where the notion of orthogonality is in
$K(\mathcal{B})$. We define $\widehat{h}(P)=\{h(p): p\in P\}$. Notice
that if $P\in\mathcal{S}_{A(K(\mathcal{B}))}$ then there is a $b\in
\mathcal{S}_{\mathcal B}$: $b\leq P$ which implies that $h(b)\leq
\widehat{h}(P)$ with $h(b)\in \mathcal{S}_{\mathcal A}$ since $h$ is
monotonic, i.e.,
$\widehat{h}(P)\in\mathcal{S}_{A(K(\mathcal{B}))}$. An easy
computation shows that $f(\widehat{h}(R)) \to R=\{q\rightarrow p:
q\leq f(h(R)), p\in R\}$. According to the orthogonality condition, it
will be enough to find an element $a\in\mathcal{S}_{\mathcal A}$ with
$a\leq q\rightarrow p$, $\forall q\leq f(h(R))$, $\forall p\in R$,
$\forall R \in\mathcal{S}_{A(K(\mathcal{B}))}$. We observe that $
f(h(p))\rightarrow p\leq q\rightarrow p$ for every $p\in R$ and also
that (by hypothesis) there exists an $a \in\mathcal{S}_{\mathcal A}$
such that $ a \leq f(h(b))\rightarrow b$ for every $b\in
\mathcal{S}_{\mathcal B}$. As $\mathcal{S}_{\mathcal B} \cap {}^\perp
R \neq \text{\O}$ in our context implies that $R\subseteq
\mathcal{S}_{\mathcal B}$ the element $a$ mentioned just above, does
the required job. \end{itemize} \end{proof} The results of Proposition
\ref{prop:mainadj} together with the previous definitions and
constructions guarantee the result that follows.

\begin{theo} \label{thm:main2}The map $K: \IA \to \mathcal{AKS}$
  extends to:
  \begin{enumerate}
  \item A functor $K: {\bf IA} \to {\bf AKS}$ when we take applicative
    morphisms as arrows; \item A functor also called $K: {\bf IAc} \to
    {\bf AKSc}$ when we take the lluf (wide) subcategories
    respectively and morphisms that are the computationally
    dense. \end{enumerate} \end{theo}

\section{The main adjunction}\label{section:adjunction}
\item\label{basicdiagram} In this section, we prove that the functors
  described above (see Theorems \ref{thm:main1} and \ref{thm:main2})
  form an adjoint pair in any of the two following
  contexts. \begin{equation}\label{eqn:mainadjunction} \quad\xymatrix{
      {\bf AKS} \ar@/^2pc/[rr]^{A} &
      {\hspace*{-0.19cm}\mbox{\larger[0]$\perp$}} &
      \ar@/^2pc/[ll]^{K}{\bf IA}}\quad\quad \xymatrix{ {\bf AKSc}
      \ar@/^2pc/[rr]^{A} & {\hspace*{-0.19cm}\mbox{\larger[0]$\perp$}}
      & \ar@/^2pc/[ll]^{K}{\bf IAc}} \end{equation}
  
Before the proof of the above statement, we identify explicitly the
composition functors $KA:{\bf AKS} \to {\bf AKS}$ and $AK: {\bf IA}
\to {\bf IA}$.
\begin{enumerate}
\item \label{item:AK} If $\mathcal A=(A,\leq,\to,\mathcal S_A)$, then $A(K(\mathcal A))=(\mathcal
P(A),\leq,\leadsto, \mathfrak S)$
where:
\begin{enumerate} \item $C \leq D$ if and only if $C \supseteq
  D$;
\item For $C,D \subseteq A\,,\,C \leadsto D:=\{c \to d:c
  \leq\bigcurlywedge C, d \in D\}$ --notice that $\leadsto$ is
  an abbreviation of the implication $\to_{A(K(\mathcal A))}$ in
  $A(K(\mathcal A))$;
\item Call $\mathfrak k, \mathfrak s$ the infima in $A(K(\mathcal A))$
  that were considered in Definition \ref{defi:mainimpl},(3) and
  similarly for $\ck,\,\cs$ that are defined as minima en $A$. Then $\mathfrak k \subseteq {\uparrow}\,\ck\,$, $\mathfrak s\subseteq {\uparrow}\,\cs\,$;
\item $\mathfrak S=\{C \subseteq A:
  \bigcurlywedge C \in \mathcal
  S_A\}$.
\end{enumerate}
\item \label{item:KA} The description of
$KA$ is more involved and we write all the operations, relations and
maps in $KA(\mathcal K)$ in terms of the corresponding operations in
$\mathcal K$.  To distinguish one from the others the ones in
$AK(\mathcal K)$ will be adorned with a $\sim$. We write: \[\mathcal
K=(\Pi,\bbot,\mathrm{push}, \mathfrak{app},\cK,\cS,\mathrm{QP})\,,\,
K(A(\mathcal K))=(\mathcal
P(\Pi),\bbot_\sim,\widetilde{\mathrm{push}},
\widetilde{\mathfrak{app}},\widetilde{\cK}, \widetilde{\cS},
\widetilde{\mathrm{QP}}),\] and recall that $\mathrm{push}: \Pi \times
\Pi \to \Pi$ is denoted as $\mathrm{push}(s,\pi)=s \cdot \pi$.
We used above the following abbreviations:
\begin{enumerate}
\item
  $\mathcal P(\Pi)$ is the power set of $\Pi$ and is the basic set of
  $KA(\mathcal K)$.
\item we have $\bbot_{\sim}= \supseteq$\,,\, with $\bbot_\sim\subseteq
  \mathcal P(\Pi) \times \mathcal P(\Pi)$, i.e., $P \perp_{\sim} Q
  \Leftrightarrow P \supseteq Q$.
\item $\widetilde{\mathrm{push}}=\tilde{\cdot}:\mathcal P(\Pi) \times
  \mathcal P(\Pi) \to \mathcal P(\Pi)_\Pi$,
  $\widetilde{\mathrm{push}}(P,Q)={}^\perp P \cdot Q = P \to
  Q$. Recall also the notation $\widetilde{\mathrm{push}}(P,Q)=
  P~\tilde{\cdot}~Q$;
\item $\widetilde{\mathfrak{app}}:\mathcal P(\Pi) \times \mathcal
  P(\Pi) \to \mathcal P(\Pi)$, $\widetilde{\mathfrak{app}}(P,Q)=P
  \cdot_\to Q$ where $\cdot_\to$ is the adjoint operation to $\to$;
\item $\widetilde{\mathrm{QP}}=\{P \subseteq \Pi: {}^\perp P \cap
  \mathrm{QP} \neq \text{\O}\}$.
\end{enumerate}
The explicit expressions for the combinators are: $\widetilde{\cK}=\{t
\cdot u \cdot \pi: t \perp \pi\} \subseteq \{\cK\}^\perp;
\widetilde{\cS}=\{t \cdot u \cdot v \cdot \pi: tv(uv) \perp \pi\}
\subseteq \{\cS\}^\perp$.
\end{enumerate}
\begin{obse}\label{obse:interpretationleadsto} In the above notations,
  Lemma \ref{lema:equalinf} can
  be written as: \[\bigcurlywedge(C \leadsto D)=\bigcurlywedge C \to
  \bigcurlywedge D. \]
\end{obse}
\item\label{item:maincompdenseappl} The theorem that follows, that is
  one of the main objectives of this paper, is formulated for
  computationally dense morphisms, but it is also true for morphisms
  that are merely applicative. This follows from the structure of the
  proof where the applicative case is proved first and without using
  the density conditions.

  \begin{theo}\label{theo:mainadj} The
    functors\, ${\bf AKSc}\stackrel{A}{\longrightarrow}{\bf IAc}$\,
    and\, ${\bf IAc}\stackrel{K}{\longrightarrow}{\bf AKSc}$\, form an
    adjoint pair with counit $\varepsilon:AK \Rightarrow
    \operatorname{id}_{\bf IAc}: {\bf IAc} \rightarrow {\bf IAc}$ and
    unit $\eta :\operatorname{id}_{\bf AKSc} \Rightarrow KA: {\bf
      AKSc} \rightarrow {\bf AKSc}$ defined as
    follows: \begin{enumerate} \item For $\mathcal A \in {\bf IAc}$
      the natural transformation $\varepsilon:AK \Rightarrow
      \operatorname{id}$ is defined on the component $\mathcal A$ as
      the map $\varepsilon_{\mathcal A}:\mathcal P(A) \to A$ with
      $\varepsilon_{\mathcal A}(C)=\bigcurlywedge C$ for $C \subseteq
      A$. \item For $\mathcal K \in {\bf AKSc}$ the natural
      transformation $\eta:\operatorname{id} \Rightarrow KA$ on the
      component $\mathcal K$ is defined as the map $\eta_{\mathcal
        K}:\Pi \to\mathcal P(\Pi)$ with $\eta_{\mathcal
        K}(\pi)=\{\pi\}$ for $\pi \in
      \Pi$.
    \end{enumerate}
  \end{theo}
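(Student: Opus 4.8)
The plan is to verify directly that the explicitly prescribed $\eta$ and $\varepsilon$ constitute an adjunction: that each component is a $1$-cell of the appropriate $2$-category, that both families are natural, and that the two triangle identities hold. By the remark preceding the statement it suffices to argue in the computationally dense categories. The key simplification is that $A$ acts on arrows as $\mathcal P(-)$ whereas $K$ leaves the underlying function untouched, so every naturality square and every triangle below may be verified at the level of set-theoretical functions.

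First I would show that $\varepsilon_{\mathcal A}(C)=\inf C$ is a computationally dense applicative morphism $A(K(\mathcal A))\to\mathcal A$. Reading off the description of $A(K(\mathcal A))$ in \ref{item:AK}: preservation of arbitrary meets is the associativity identity $\inf\bigcup X=\bigcurlywedge_{C\in X}\inf C$; the separator is preserved because $\mathfrak S=\{C:\inf C\in\mathcal S_A\}$; and condition $(2)$ of Definition \ref{defi:morfimplalgsimpler} collapses to an \emph{equality}
\[\varepsilon_{\mathcal A}(C\leadsto D)=\inf(C\leadsto D)=\inf C\to\inf D=\varepsilon_{\mathcal A}(C)\to\varepsilon_{\mathcal A}(D)\]
by Lemma \ref{lema:equalinf}, so the relation $\sseu$ is realized uniformly by the identity combinator $\ci$. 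Computational density is witnessed by the monotone $h(a)=\{a\}$, for which $\varepsilon_{\mathcal A}(h(a))=\inf\{a\}=a$.

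Next I would check that $\eta_{\mathcal K}(\pi)=\{\pi\}$ is a computationally dense morphism $\mathcal K\to KA(\mathcal K)$, reading $KA(\mathcal K)$ off \ref{item:KA}. Because there $\perp_\sim$ is reverse inclusion, one computes ${}^{\perp_\sim}\{\{\pi\}:\pi\in P\}=\{S\subseteq\Pi:P\subseteq S\}$; hence condition $\mathrm{(a)}$ of Definition \ref{defi:moraks} is witnessed by $S=P$ itself, since $\mathrm{QP}\cap{}^\perp P\neq\O$ already places $P$ in $\widetilde{\mathrm{QP}}$, and a natural candidate for the density map is $R\mapsto\bigcup R$. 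This is the step I expect to be the main obstacle: condition $\mathrm{(b)}$ and the density inequality both refer to the \emph{iterated} orthogonality $\perp_\sim$, implication and quasi-proofs $\widetilde{\mathrm{QP}}$ of $KA(\mathcal K)$, so the argument requires patiently unwinding these in terms of the original data of $\mathcal K$ and then reducing, via $\inf\{s\}=s$, to realizers already available in $\mathcal K$.

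Finally, naturality and the triangle identities are strict. For a morphism $f$ in ${\bf IAc}$ the naturality square for $\varepsilon$ reads $f(\inf C)=\inf\{f(c):c\in C\}$, which is precisely condition $(3)$ of Definition \ref{defi:morfimplalgsimpler}; for a morphism $\mathfrak f$ in ${\bf AKSc}$ the square for $\eta$ is the tautology $\mathcal P(f)(\{\pi\})=\{f(\pi)\}$. The composite $(\varepsilon A)(A\eta)$ sends $P\mapsto\{\{\pi\}:\pi\in P\}\mapsto\bigcup\{\{\pi\}:\pi\in P\}=P$, and $(K\varepsilon)(\eta K)$ sends $a\mapsto\{a\}\mapsto\inf\{a\}=a$; both are the identity on the nose, which establishes the adjunction.
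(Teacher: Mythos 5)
Your overall scaffolding agrees with the paper's proof: the counit computation via Lemma \ref{lema:equalinf} with the uniform realizer $\ci$, naturality of $\varepsilon$ being exactly condition (3) of Definition \ref{defi:morfimplalgsimpler}, tautological naturality of $\eta$, and the strict, on-the-nose triangle identities are all as in the paper. But there are two genuine problems. First, your density witness for $\varepsilon_{\mathcal A}$, namely $h(a)=\{a\}$, is \emph{not monotone}, and Definition \ref{defi:compdensemor} explicitly demands a monotonic $h$: in $A(K(\mathcal A))$ the order is reverse inclusion, so $a\leq b$ would require $\{b\}\subseteq\{a\}$, which fails whenever $a\neq b$. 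The paper instead takes $h(a)=\,\uparrow\! a$ (the upward closure), which is monotone ($a\leq b$ gives $\uparrow\! b\subseteq\,\uparrow\! a$), sends $\mathcal S_A$ into $\mathfrak S$, and still satisfies $\varepsilon_{\mathcal A}(\uparrow\! a)=a$; the repair is a one-line substitution, but as written your witness violates the definition you are invoking.

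Second, and more seriously, you defer the verification of condition b) of Definition \ref{defi:moraks} and of the density inequality for $\eta_{\mathcal K}$ to "patient unwinding," and this is precisely where the content of the theorem lives; a plan is not a proof here. The paper carries it out: from ${}^{\perp_\sim}\eta_{\mathcal K}(M)=\{R\subseteq\Pi: R\supseteq M\}$ one computes
\[{}^\perp\big(\eta_{\mathcal K}({}^\perp P\cdot Q)\big)\,\tilde{\cdot}\,\big(\eta_{\mathcal K}(P)\to\eta_{\mathcal K}(Q)\big)=\{R\to{}^\perp M\cdot\pi:\ R\supseteq{}^\perp P\cdot Q,\ M\supseteq P,\ \pi\in Q\}\subseteq{}^\perp P\cdot Q\to{}^\perp P\cdot Q,\]
using the variances of $\to$ and $\cdot$, and the right-hand side is contained in $I=\{\cS\cK\cK\}^\perp\in\widetilde{\mathrm{QP}}$, so $S:=I$ is the required uniform witness for b). Likewise for density with $h(\mathcal H)=\bigcup_i H_i$ (your candidate, which is indeed the paper's and is monotone): after computing $\mathcal P(\eta_{\mathcal K})(h(\mathcal H))\to\mathcal H=\{Q\to H_j:\ Q\supseteq\bigcup_i H_i,\ j\in I\}$, one needs a single $T\in\mathcal S_{A(\mathcal K)}$ with $T\supseteq\bigcup_i H_i\to H_j$ for every $\mathcal H$ and $j$, and again $T=\{\cS\cK\cK\}^\perp\supseteq H_j\to H_j\supseteq\bigcup_i H_i\to H_j$ does the job. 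Your treatment of condition a) via $R=P$ is correct (the paper leaves it implicit), but without producing the uniform realizer $\cS\cK\cK$ for both b) and the density condition, the unit half of the adjunction --- and hence the theorem --- remains unproved in your proposal.
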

  \begin{proof}
    \begin{enumerate}
    \item
      The counit.
\begin{enumerate} \item We start proving that $\varepsilon_{\mathcal A}:\mathcal P(A) \to A$: $\varepsilon_{\mathcal A}(C)=\bigcurlywedge C$
for $C \subseteq A$ is a morphism in the category ${\bf IA}$. The
proof that the map $\varepsilon_{\mathcal A}$ takes the separator
$\mathfrak S$ into $\mathcal S_{\mathcal A}$ follows immediately from
the description of $\mathfrak S$ given above in paragraph
\ref{basicdiagram} (item \eqref{item:AK} (c)). Next, we show that
$\varepsilon_{\mathcal A}$ satisfies the condition of Definition
\ref{defi:morfimplalgsimpler} (2). We need to prove that there exists
an $r \in \mathcal S_{\mathcal A}$ such that for all $C,D \subseteq A$
such that $C \leadsto D \in \mathfrak S$, $r \leq
\varepsilon_{\mathcal A}(C \leadsto D)\to (\varepsilon_{\mathcal A}(C)
\to \varepsilon_{\mathcal A}(D))=\bigcurlywedge_A(C \leadsto D)\to
(\bigcurlywedge_A(C) \to \bigcurlywedge_A(D))=\bigcurlywedge\{c \to d:
c \leq \bigcurlywedge C, d \in D\}\to (\bigcurlywedge_A(C) \to
\bigcurlywedge_A(D))$. As both terms of this expression are equal, we
can take $r=\ci$ (see Lemma \ref{lema:equalinf}). We finish this part
by proving that $\varepsilon_{\mathcal A}$ commutes with infinite
meets. In our case the order in the domain is the reverse inclusion,
hence the required commutation becomes the following equality for an
arbitrary family of subsets $\{C_i: i \in I\} \subseteq\mathcal P(A)$:
$\bigcurlywedge_A(\bigcup_i
C_i)=\bigcurlywedge_A\{\bigcurlywedge_A(C_i): i \in I\}$.  This
equality is clearly true. \item The fact that $\varepsilon: \mathcal
P(A) \to A$ is computationally dense, follows from the existence of a
right inverse $h: a \mapsto\,{\uparrow} a: A \to\mathcal P(A)$.
Observe also that $h$ is monotonic and sends the separator of $A$ into
$\mathfrak S=\varepsilon_{\mathcal A}^{-1}(\mathcal S_{\mathcal A})$.
\item Next we prove that the family of morphisms
  $\varepsilon_{\mathcal A}$ is natural, i.e. that for any applicative
  morphism $f: \mathcal A \to \mathcal B$ of implicative algebras, the
  following diagram commutes: \begin{center} \[\xymatrix{\mathcal
      P(A)\ar[d]_{\mathcal P(f)}\ar[rr]^{\varepsilon_{\mathcal A}}&&
      A\ar[d]^f\\\mathcal P(B)\ar[rr]^{\varepsilon_{\mathcal A}}&&
      A}\] \end{center} The commutativity of this diagram is the last
  condition of the definition of morphism in ${\bf IA}$ and follows
  directly from the fact that $f$ preserves
  $\bigcurlywedge$. \end{enumerate} \item The
unit. \begin{enumerate} \item We start by proving that the map
  $\eta_{\mathcal K}:\Pi \to \mathcal P(\Pi)$,\, $\eta_{\mathcal
    K}(\pi)=\{\pi\}$ is a morphism in ${\bf AKS}$, i.e. is applicative
  in the sense of Definition \ref{defi:moraks}. We check conditions
  a), b) therein. Condition a) is an easy consequence of the
  following: the implication: ($\mathrm{QP} \cap {}^\perp P \neq
  \text{\O}\Rightarrow \mathrm{QP'} \cap {}^\perp \eta_{\mathcal
    K}(P)\neq \text{\O}$) translates into if $\exists s\in
  \mathrm{QP}$ such that $s \perp P$ then $\exists S:{}^\perp S \cap
  \rmqp\neq \text{\O}$ with $S \perp_{\sim} \eta_{\mathcal K}(P)$ (
  i.e., $S \perp \eta_{\mathcal K}(p)$ for every $p\in P$). Hence, if
  we take $S=P$ it is clear that $s \in {}^\perp P \cap \rmqp$ and
  hence the intersection is not empty. Moreover, since $\forall p\in
  P,\, \eta_{\mathcal K}(p)=\{p\}\subseteq P$ and this means $\forall
  p\in P,\, P\perp_{\sim} \eta_{\mathcal K}(p)$.

Condition b) can be formulated as follows: there is a set $S \subseteq
\Pi$ such that ${}^\perp S \cap \rmqp \neq \text{\O}$ and $S
\perp_{\sim} \eta_{\mathcal K}(P \to Q) \to\eta_{\mathcal K}(P) \to
\eta_{\mathcal K}(Q)$ for all $P,Q\subseteq \Pi$ such that there
exists some $t_{P,Q} \in\rmqp$ and $t_{P,Q} \perp {}^\perp P\cdot
Q$. Observe that the above inequality for $S$ is equivalent with $S
\perp_{\sim} {}^\perp\big(\eta_{\mathcal K}({}^\perp P \cdot
Q)\big)\,\tilde{\cdot}\, \big(\eta_{\mathcal K}(P) \to\eta_{\mathcal
  K}(Q)\big)$. We start by finding an upper bound for the rightmost
term of the above relation. A subset of $\mathcal P(\Pi)$ of the form
${}^{\perp_\sim}\eta_{\mathcal K}(M)$ for $M \subseteq \Pi$ equals:
$\{R \subseteq \Pi: R \supseteq \{\pi\}, \forall \pi \in M\}=\{R
\subseteq \Pi:R \supseteq M\}$. Hence: \begin{itemize} \item
  ${}^{\perp_\sim}\eta_{\mathcal K}({}^\perp P\cdot Q)=\{R \subseteq
  \Pi: R \supseteq {}^\perp P \cdot Q\}$; \item $\eta_{\mathcal K}(P)
  \to \eta_{\mathcal K}(Q)={}^{\perp_\sim}(\eta_{\mathcal K}(P))
  \,\tilde{\cdot}\, \eta_{\mathcal K}(Q)=\{M \,\tilde{\cdot}\,\{\pi\}:
  M \supseteq P\,\, \pi \in Q\}=\{{}^\perp M \cdot \{\pi\}: M
  \supseteq P\,,\, \pi \in Q\}$;
\item ${}^{\perp_\sim}\big(\eta_{\mathcal K}({}^\perp P \cdot
  Q)\big)\,\tilde{\cdot}\, \big(\eta_{\mathcal K}(P) \to\eta_{\mathcal
    K}(Q)\big)=\{R \to {}^\perp M \cdot\{\pi\}: R \supseteq {}^\perp
  P\cdot Q\,,\, M \supseteq P\,,\, \pi \in Q\}$.
\end{itemize}
Hence,
in accordance with the variance of the different maps($\to$ is
contravariant in the first variable and covariant in the second and
$\cdot$ is covariant in both variables) we have the following
bound: \[\bigcup {}^{\perp_\sim}\big(\eta_{\mathcal K}({}^\perp P \cdot
Q)\big)\,\tilde{\cdot}\, \big(\eta_{\mathcal K}(P) \to\eta_{\mathcal
  K}(Q)\big)\subseteq {}^\perp P \cdot Q \to{}^\perp P \cdot Q.\] As
the set $I:=\{\cS\cK\cK\}^\perp\in \widetilde{QP}$ satisfies the
following inequality: ${}^\perp P \cdot Q \to {}^\perp P \cdot Q
\subseteq I$ we can take the $S=I$, and
the proof of condition b) is finished. \item We verify now, that the
maps $\eta_{\mathcal K}: \mathcal K \to K(A(\mathcal K))$ are
computationally dense morphisms. We consider the function
$h:\mathcal{S}_{A(K(A(\mathcal K)))}
\rightarrow\mathcal{S}_{A(\mathcal K)}$ defined as follows. An element
of the domain is a family of subsets of $\Pi$, $\mathcal H=\{H_i: i
\in I, H_i \subseteq \Pi\}$ and the function $h$ is defined as the
union: $h(\mathcal H)=\bigcup_i H_i$. Clearly the map $h$ is
monotonic. Moreover, $h(\mathcal{S}_{A(K(A(\mathcal K)))})
\subseteq \mathcal{S}_{A(\mathcal K)}$. Indeed, if $\mathcal H=\{H_i: i
\in I, H_i \subseteq \Pi\} \in \mathcal{S}_{A(K(A(\mathcal K)))}$ there is a $P \subseteq \Pi$ such that ${}^\perp P \cap \operatorname{QP} \neq \text{\O}$ and $P \supseteq H_i$ for all $i \in I$ (recall the explicit description of $KA$ in Paragraph \ref{basicdiagram}). Then $P \supseteq h(\mathcal H)$ and we deduce that $h(\mathcal H) \in \mathcal S_{A(\mathcal K)}$.   To finish the proof we need to prove the existence of a
subset $T \in \mathcal{S}_{A(\mathcal K)}$ satisfying:
$$T \perp_{\sim} \mathcal P(\eta_{\mathcal K})(h(\mathcal H))\rightarrow\mathcal H$$
for every $\mathcal H\in \mathcal{S}_{A(K(A(\mathcal K)))}$.

Thus, since $\mathcal P(\eta_{\mathcal K})(h(\mathcal H))\rightarrow
\mathcal H={}^{\perp_\sim}(\mathcal P(\eta_{\mathcal K})(h(\mathcal
H))) \cdot_\sim \mathcal H=\{Q: Q\perp_{\sim}\{x\}, \forall x\in
\bigcup_i H_i\} \cdot_{\sim} \mathcal H=\{Q\rightarrow H_j: Q\supseteq
\bigcup_i H_i: j \in I\}$. Therefore, we need to be able to find $T$
such that: $$T\supseteq \bigcup_i H_i \rightarrow H_j$$ for every $j
\in I$ and for every $\mathcal H=\{H_i: i \in I\} \in
\mathcal{S}_{A(K(A(\mathcal K)))}$. But, if we
call $I=\{\cS\cK\cK\}^\perp$, then $I\supseteq H_j \rightarrow H_j
\supseteq \bigcup_i H_i \rightarrow H_j$ does the job.

\item The naturality of $\eta_{\mathcal K}$ means the evident fact
  that for all $\mathfrak f:\mathcal K \to \mathcal K'$ (i.e. $f: \Pi
  \to \Pi'$) the diagram written below is
  commutative: \begin{center} \[\xymatrix{\Pi
      \ar[d]_{f}\ar[rr]^{\eta_{\mathcal K}}&& \mathcal
      P(\Pi)\ar[d]^{\mathcal P(f)}\\ \Pi'\ar[rr]^{\eta_{\mathcal
          K'}}&& \mathcal
      P(\Pi')}\] \end{center} \end{enumerate} \item The triangular
equalities. These equalities mean that the following diagrams of
functors and natural transformations are commutative:
\begin{enumerate} \item \begin{equation*} \xymatrix{& AKA \ar[dr]^{\varepsilon A}&\\A \ar[ur]^{A\eta}\ar[rr]_{\operatorname{id}_A} && A}
\end{equation*} \item \begin{equation*} \xymatrix{& KAK \ar[dr]^{K\varepsilon}&\\K \ar[ur]^{\eta K}\ar[rr]_{\operatorname{id}_K} && K}
\end{equation*} \end{enumerate} \begin{enumerate} \item Concerning the first diagram we have that for $\mathcal K \in {\bf AKS}$ the map:
\[A(\eta_{\mathcal K}):A(\mathcal K) \to A(K(A(\mathcal K)))\] is the set-theoretical map from $P \mapsto \{\{\pi\}: \pi \in P\}: \mathcal
P(\Pi) \to \mathcal P(\mathcal P(\Pi))$. Morever, the
map \[\varepsilon_{A(\mathcal K)}: A(K(A(\mathcal K))) \to A(\mathcal
K)\] is the set-theoretical from $\mathscr P \mapsto
\bigcurlywedge_{A(\mathcal K)}=\bigcup \{P : P \in \mathscr P\}:
\mathcal P(\mathcal P(\Pi)) \to \mathcal P(\Pi)$. It is clear that the
composition of both maps is the identity.\item Consider now the
map \[\eta_{K(\mathcal A)}: K(\mathcal A) \to K(A(K(\mathcal A)))\]
that is the set-theoretical map
$a \mapsto \{a\}:A \to \mathcal P(A)$ and $K(\varepsilon_{\mathcal
  A}): K(A(K(\mathcal A))) \to K(\mathcal A)$ that is the set
theoretical map $P \mapsto \bigcurlywedge_A(P): \mathcal P(A) \to
A$. It is clear that the composition of both maps is the identity.
\end{enumerate}
    \end{enumerate}
  \end{proof}
      
  \begin{obse} We recall the following basic facts concerning adjunctions. Assume that ${\mathbf A}$,\,${\mathbf B}$ are arbitrary categories and that the pair $(L,R)$ is an adjunction $\xymatrix@!=.5pc{{\mathbf B}\ar@/^1pc/[rr]^{L}&  
{\hspace*{-0.05cm}\mbox{\larger[1]$\,\perp$}}& {\mathbf
  A}\ar@/^1pc/[ll]^{R}}$.
    \begin{itemize}
    \item[*] The functor $L$ is faithful if and only if for every $B\in
      \mathcal B $ the unit $\eta_B: B \rightarrow RL(B)$ is monic
      . Moreover, $L$ is full if and only if each $B$ the unit
      morphism $\eta_B:B\rightarrow RL(B)$ has a right inverse.
    \item[*] Dually, $R$ is faithful if and only if for all $A\in
      \mathcal A$, $\varepsilon_A:LR(A)\rightarrow A$ is epi . Also,
      $R$ is full if and only if each $\varepsilon_A:LR(A)\rightarrow
      A$ has a left inverse. 
    \end{itemize}
    See \cite[Chap. IV, Sect. 3]{kn:cwm}.
  \end{obse}
  \begin{coro}\label{coro:adjplus} In the notations of
    Theorem \ref{theo:mainadj} the functors ${\bf
      AKSc}\stackrel{A}{\longrightarrow}{\bf IAc}$\, and\, ${\bf
      IAc}\stackrel{K}{\longrightarrow}{\bf AKSc}$ ($A \dashv K$) are
    both faithful but not full, and the same happens with their
    restrictions to ${\bf AKS}$ and ${\bf IA}$.
  \end{coro}
 
  \begin{obse} The two categories that we are considering
    (abstract Krivine structures and implicative algebras), even
    though the above theorem shows that they are quite disparate,
    produce the same class of triposes up to equivalence when when
    mapped by ${\bf H}$ --see Section \ref{section:IAtoTripos}--.
    \end{obse}
  \section{The category of implicative comonads}\label{section:changing} \item In \cite{kn:ocar2}, the authors together with
  M. Guillermo, presented a modification of the construction by
  Streicher of a tripos from an AKS that appeared in
  \cite{kn:streicher}. This new process of producing a tripos was
  named ``the bullet construction''\footnote{The bullet construction
    appeared in \cite{kn:ocar2}, and consists of the consideration of
    an implicative algebra based upon $\mathcal P_\bullet(\Pi)$ that
    is the set of all subsets of $\Pi$ that are closed under the
    Alexandroff approximation of Streicher's closure operator.} and it
  was proved that the resulting tripos was equivalent to Streicher's
  as well as to Krivine's triposes. In the same manner, as the work of
  Streicher was based in the use of the double perpendicular closure
  operator associated with the pole of the original AKS, the mentioned
  modification involved an \emph{Alexandroff operator}, which was
  closely univocally determined by, the double perpendicular operator
  considered by Streicher. Below we present a generalization of the
  ``bullet construction'', in the context of the categories of
  \emph{comonads} in ${\bf IA}$ and ${\bf IAc}$ that is valid for
  Alexandroff operators (comonads) that satisfy some mild additional
  hypotheses that hold true in the case of implicative algebras coming
  from AKSes. We mentioned briefly in the Introduction, the
  relationship of these constructions with the \emph{formal theory of
    monads} as appears in \cite{kn:street}.
\item \label{item:generalalexandroff} We start by
recalling the basic definitions of Alexandroff interior operators and
proving an ``approximation'' result generalizing some of the
constructions in \cite[Section 4]{kn:ocar2} that were established
therein for the situation of implicative algebras coming from
AKSes. \begin{defi}\label{defi:appendix} Let $\mathcal A=(A,\leq)$ be
  an $\operatorname{inf}$-complete
  semilattice. \begin{enumerate} \item An \emph{interior operator} on
    $\mathcal A$ is a map $\iota: a \mapsto \iota(a): A \rightarrow A$
    such that: \begin{enumerate*} \item For all $a \in A$, $\iota(a)
      \leq a$, \item For all $a \in A$, $\iota(\iota(a))=
      \iota(a)$, \item For all $a,b\in A$, $a \leq b$ implies that
      $\iota(a) \leq \iota(b)$. \end{enumerate*} \item It is said to
    satisfy the \emph{Alexandroff condition} if for every $B \subseteq
    A$ we have $\iota(\bigcurlywedge_{b \in B}b)=\bigcurlywedge_{b \in
      B}\iota(b)$. \item The set of $\iota$--open elements of $A$ is
    $A_{\iota}:=\iota(A)=\{a \in A:\iota(a)=a\}.$ \item Call $\mathcal
    I(\mathcal A)$ the set of the interior operators and $\mathcal
    {I_{\!\!\infty}}(\mathcal A)$ the set of Alexandroff interior
    operators of $\mathcal A$.
\item \label{item:order} For $\iota\,,\,\kappa$ interior operators
  write $\iota \leq \kappa$ if for all $a \in A$, $\iota(a)\leq
  \kappa(a)$.
\end{enumerate} \end{defi} 
\begin{obse} It is clear in the above context, that an interior operator in $(A, \leq)$ is the same than a  comonad structure, and that an Alexandroff interior operator is the same than a 
continuous comonad structure.
\end{obse}
\begin{theo}\label{theo:existencealaprox} If $\mathcal{A}=(A, \leq)$ is an $\operatorname{inf}$-complete
semilattice, the inclusion of posets viewed as a functor
$\mathrm{inc}: \mathcal I_{\!\!\infty}(\mathcal A) \subseteq \mathcal
I(\mathcal A)$ has a left adjoint. In other words for all $\iota \in
\mathcal I(A)$ there exists an Alexandroff interior operator
$\iota_{\infty} \in \mathcal I_{\!\!\infty}(\mathcal A)$ such that
$\iota_\infty=\bigcurlywedge\{\tau \in \mathcal
I_{\!\!\infty}(\mathcal A): \iota \leq \tau\}$. In that situation we
say that $\iota_{\infty}$ is the AL-approximation of $\iota$
(Alexandroff approximation). \end{theo}
\begin{proof} For a complete semilattice, we define the following two subsets of $\mathcal P(A)$: $\mathcal P_c(A)=\{B \subseteq A: \forall
C \subseteq B, \sup_A(C) \in B\}$\,,\, $\mathcal P_{c,\infty}(A)=\{B
\in \mathcal P_c(A):\forall C \subseteq B, \bigcurlywedge_A(C) \in B\}
\subseteq \mathcal P_c(A)$ and take in both sets the order given by
inclusion. Then, the maps $\Theta(\iota)=A_\iota=\{x \in
A:\iota(x)=x\}$ and $\Theta^{-1}(B)=\iota_B$ where $\iota_B:A \to A$
is defined as $\iota_B(x)=\sup_A B_x$ where $B_x=B \cap (-\infty,x]=\{b
  \in B: b \leq x\}$; define an ordered bijection between $\mathcal
  I(A)$ and $\mathcal P_{c,\infty}(A)$ such that $\Theta(\mathcal
  I_{\!\!\infty}(\mathcal A))= \mathcal P_{c,\infty}(A)$ --we endow
  $\mathcal I(\mathcal A)$ with the pointwise order and $\mathcal
  P_c(A)$ with the order given by inclusion. Using this bijection we
  change the assertion about $\mathrm{inc}: \mathcal
  I_{\!\!\infty}(\mathcal A) \subseteq \mathcal I(\mathcal A)$ into an
  assertion about $\mathrm{inc}':\mathcal P_{c,\infty}(A) \subseteq
  \mathcal P_{c}(A)$ and we prove that given an inclusion $B \subseteq
  A$ with $B$ closed under sups (i.e. $B \in \mathcal P_{c}(A)$),
  there is a smallest $B_{\infty} \in \mathcal P_{c,\infty}(A)$ such
  that $B \subseteq B_\infty$. Clearly, the subposet
  $\bigcap\{B':B\subseteq B' \subseteq A: B' \in P_{c,\infty}(A)\}$
  does the job\footnote{We thank Ignacio Lopez Franco for suggesting
    the compact argument presented.}. \end{proof}
\begin{obse}\label{obse:closureop}\begin{enumerate} \item The interior operators considered in \cite[Definitions 4.3 and 4.6]{kn:ocar2} are
the double perpendicular closure and its Alexandroff approximation,
that therein are viewed as \emph{closure operators} because  --as it is
frequent in this theory-- the order taken in the power set of
the stacks, is the inverse inclusion order. It is clear that the closure
corresponds to the interior in the opposite order. The closed elements
defined by a \emph{closure operator} $\operatorname{c}$ are the
elements $a$ such that
$\operatorname{c}(a)=a$. \item \label{item:closurint} If $\iota$ is an
\emph{Alexandroff} interior operator, then we define a closure
operator $\operatorname{c}_\iota: A \rightarrow A$ as follows:
$\operatorname{c}_\iota(a)=\bigcurlywedge\{b: \iota(b)=b, a \leq
b\}$. In this context, the set of closed elements of
$\operatorname{c}_\iota$ coincides with the set of open elements of
$\iota$.
  \end{enumerate}
\end{obse}
\begin{defi} Let $\mathcal A=(A,\leq, \to)$ be an implicative structure
  and $\iota:(A,\leq) \to (A,\leq)$ an Alexandroff interior
  operator. Let $A_\iota$ be the set of open elements of $A$, i.e. the
  fixed points of $\iota$. Define the map $\to_\iota: (a,b) \mapsto
  \iota(a \to b):A_\iota \times A_\iota \rightarrow
  A_\iota$ \end{defi}
\begin{theo}\label{theo:changeimpl}\label{lema:change}
  If $\mathcal A=(A,\leq, \to)$ is an implicative structure and
  $\iota$ an Alexandroff interior operator and $c_\iota$ its
  associated closure operator, then:
  \begin{enumerate}
  \item
    $(A_\iota,\leq)$ is an \emph{inf}-complete semilattice;
  \item The map $\to_\iota: A_\iota \times A_\iota \rightarrow
    A_\iota$ defines an implicative structure in $A_\iota$;
\item If $\cdot:A \times A \rightarrow A$ and $\cdot_{\iota}:A_\iota
  \times A_\iota \to A_\iota$ are the application maps associated to
  $\to$ and $\to_\iota$ respectively, then $a\cdot_{\iota}b= c_\iota
  (a\cdot b)$ for every $a$ and $b\in A_\iota\, $ (see diagram
  \ref{eqn:iotace} below).
\end{enumerate} 
\end{theo} \begin{proof}
\begin{enumerate} \item Given $B\subseteq A_\iota$ it is clear that $\bigcurlywedge B \in A_\iota$: $\iota( \bigcurlywedge B)=\bigcurlywedge
\iota(B)=\bigcurlywedge B$. Then the conclusion follows by defining
$\bigcurlywedge_\iota B=\bigcurlywedge B$.
\item Assume that $a \in A_\iota, B \subseteq A_\iota$, then $a
  \to_\iota \bigcurlywedge_\iota B=\iota(a \to \bigcurlywedge
  B)=\iota\left(\bigcurlywedge_{b \in B}(a \to
  b)\right)=\bigcurlywedge_{b \in B}\iota(a \to b)=
  (\bigcurlywedge_\iota)_{b \in B}(a \to_\iota b)$. Monotonicity and
  antimonotonicity properties are immediate.
\item By definition, for $a,b \in A_\iota$, $a \cdot_{\iota}b=
  {\bigcurlywedge}_\iota \{d \in A_\iota: a \leq b \to_\iota
  d\}$. Then, from the following chain of equalities we deduce our
  conclusion. \begin{gather*} \bigcurlywedge_\iota \{d \in A_\iota: a
    \leq b \to_\iota d\} =\bigcurlywedge_\iota \{d \in A_\iota: a \leq
    \iota(b \to d)\}= \bigcurlywedge_\iota \{d \in A_\iota: a \leq b
    \to d\}=\\\bigcurlywedge_\iota \{d \in A_\iota: a\cdot b \leq
    d\}=\operatorname{c}_\iota(a\cdot b). \end{gather*} For the last
  equality see Observation \ref{obse:closureop},
  \eqref{item:closurint}.
\end{enumerate} \end{proof} The commutative diagrams that follows illustrates the relationship between the old and new application and
implication maps of the implicative
structures: \begin{equation}\label{eqn:iotace}\xymatrix{A_\iota \times
    A_\iota \ar[d]_{\operatorname{inc}\times
      \operatorname{inc}}\ar[rr]^{\to_{\iota}}&& A_\iota
    \ar[d]^{\operatorname{inc}}\\ A\times A
    \ar[r]_-{\to}&A\ar[r]_{\iota}&A} \quad \xymatrix{A_\iota \times
    A_\iota \ar[d]_{\operatorname{inc}\times
      \operatorname{inc}}\ar[rr]^{\cdot_{\iota}}&& A_\iota
    \ar[d]^{\operatorname{inc}}\\ A\times A
    \ar[r]_{\cdot}&A\ar[r]_{c_\iota}&A}\end{equation} where
$\operatorname{inc}:A_\iota\to A$ is the inclusion map.
\item \label{item:compinterior} Until now we considered Alexandroff
  interior operators in the context of complete posets or implicative
  structures. In this paragraph --and for future use-- we introduce
  the class of Alexandroff operators that we will be using for
  implicative algebras. They are required to have natural
  compatibility conditions with the implication and the separator.

\begin{defi}\label{defi:alexandroffimpl} Let $\mathcal{A}=(A,\leq,\rightarrow,\mathcal{S})$ be an implicative algebra. An
Alexandroff interior operator $\iota:A \rightarrow A$ in $A$ is said
to be {\em compatible} with $\mathcal A$ --or merely {\em
  compatible}-- if it satisfies condition (1), and {\em strongly
  compatible --with $\mathcal A$--} if it satisfies condition (2):
\begin{enumerate}\item $\forall a \in A, \ci a \leq
\iota(a)$; \item For all $a,b \in A$, $\iota(a) \to b = a \to b$.
\end{enumerate}
\end{defi} 

\begin{obse}\label{obse:alexandroffimpl}\begin{enumerate}
  \item Clearly, the condition (1) in terms of the implication morphism
    can be expressed as $\forall a \in A\,,\, \ci \leq a \to \iota(a)$.

  \item Observe that condition (2) in Definition
\ref{defi:alexandroffimpl} implies condition (1). Indeed, if $a \in A$
we have that $\ci \leq \iota(a) \to \iota(a)= a \to \iota(a)$.
\item Moreover, the condition (1) and hence the stronger condition (2)
  implies that the separator is invariant by $\iota$, i.e. that
  $\iota(\mathcal S) \subseteq \mathcal S$.  Indeed: if we apply
  condition (1) for the situation that $a=s \in \mathcal S$, we have
  that $\ci s \leq \iota(s)$ and as $\mathcal S$ is closed under
  application and upper closed by inequalities, we deduce that
  $\iota(s) \in \mathcal S$.
\item
  Also, it is easy to show that $\iota(\mathcal S) \subseteq \mathcal
S$ if and only if $\iota(\mathcal S)=\mathcal S\cap A_\iota$. If
$\iota(\mathcal S) \subseteq \mathcal S$, it is clear that also
$\iota(\mathcal S) \subseteq \mathcal S \cap A_\iota$.  As the reverse
inclusion $\iota(\mathcal S) \supseteq \mathcal S \cap A_\iota$ is
always true, half of the above assertion is proved. The rest of the proof also follows easily.
\end{enumerate} 
\end{obse}
 
\begin{theo}\label{ejemplosdecompsense} Let $\mathcal A$ be an implicative
algebra with separator $\mathcal S$ and assume that $\iota:A \to A$ is
a compatible Alexandroff interior operator, then the associated map
$\iota:\mathcal A \to \mathcal A$ is a computationally dense
morphism. \end{theo} \begin{proof} In accordance with Definition
  \ref{defi:morfimplalgsimpler} and Lemma
  \ref{lema:morfimplalgsimpler} and the corresponding Definition
  \ref{defi:compdensemor} and Observation \ref{obse:compdensemor},we
  have to prove the following facts: \begin{enumerate} \item
    $\iota(\mathcal S)\subseteq \mathcal S$;
\item $\exists t \in \mathcal S: \forall s \in \mathcal S,
  t\iota(s)\iota(a) \leq \iota(sa)$; \item $\iota(\bigcurlywedge P)=
  \bigcurlywedge\iota(P)$ for all $P \subseteq A$; \item $\exists t
  \in \mathcal S, h:\mathcal S \to \mathcal S: t(\iota(h(s)))\leq s$
  for all $s \in \mathcal S$ and with $h$ monotonic. \end{enumerate}
  Conditions (1) and (3) follow directly from Definitions
  \ref{defi:appendix} and \ref{defi:alexandroffimpl} together with
  Observation \ref{obse:alexandroffimpl}. Regarding condition (2) we
  proceed as follows: it is clear that $\iota(s)\iota(a) \leq sa$ and
  then, $\ci (\iota (s)\iota(a))\leq \ci(sa)\leq \iota(sa)$. Hence,
  using Lemma \ref{lema:asocas} we deduce that if we call $t=\nu \ci$
  the required condition holds. For (4) we take $h=\operatorname{id}$
  and $t=\ci$. It is clear that $\ci\iota(s)\leq \iota(s)\leq
  s$. \end{proof} 
  \item The category of continuous comonads of ${\bf
  IA}$ (or ${\bf IAc}$) named as ${\bf Co(IA)}$ (or ${\bf Co(IAc)}$)
is given by the following data: \begin{enumerate} \item The objects
  are pairs $(\mathcal A,\iota)$ where
  $\mathcal{A}=(A,\leq,\rightarrow,\mathcal{S})$ is an implicative
  algebra and $\iota:\mathcal A\rightarrow \mathcal A$ is an
  applicative (computationally dense) morphism that is also a
  comonad.  \item Given two objects $(\mathcal A,\iota)$, $
  (\mathcal B,\kappa)$ an arrow $f:(\mathcal A,\iota)\rightarrow
  (\mathcal B,\kappa)$ is defined by an arrow $f:A\rightarrow B$ in
  the category ${\bf IA}$ (or ${\bf IAc}$) such that the diagram below
  commutes: \[\xymatrix{A
    \ar[d]_{\iota}\ar[rr]^{f}&&B\ar[d]^{\kappa}\\A\ar[rr]_{f}&&B.}\]
    \end{enumerate}
    
     Next, we introduce
the {\em category of continuous comonads} along ${\bf IA}$ (or ${\bf
  IAc}$) that is a full subcategory of ${\bf Co(IA)}$ (or ${\bf
  Co(IAc)}$) that we name as the category of implicative comonads and
we denote it as ${\bf Co_{\operatorname{imp}}(IA)}$(or ${\bf
  Co_{\operatorname{imp}}(IAc)}$). Our construction requires that the
objects $(\mathcal A,\iota)$ of this category satisfy the additional
conditions that $\iota$ is compatible with $\mathcal A$, see
Definition \ref{defi:alexandroffimpl}. See for example
\cite{kn:street} for the general theory of (co)monads 
that underlies our concrete constructions above. 
\begin{defi} Given
  the category ${\bf IA}$ (or ${\bf IAc}$) of implicative algebras
  with applicative (computationally dense) morphisms, we define a
  category ${\bf Co_{\operatorname{imp}}(IA)}$ (or ${\bf
    Co_{\operatorname{imp}}(IAc)}$) determined by the following
  data: \begin{enumerate} \item The objects are pairs $(\mathcal
    A,\iota)$ where $\mathcal{A}=(A,\leq,\rightarrow,\mathcal{S})$ is
    an implicative algebra and $\iota:A\rightarrow A$ is a compatible
    Alexandroff interior operator; \item Given two objects $(\mathcal
    A,\iota)$, $ (\mathcal B,\kappa)$ an arrow $f:(\mathcal
    A,\iota)\rightarrow (\mathcal B,\kappa)$ is defined by an arrow
    $f:A\rightarrow B$ in the category ${\bf IA}$ (or ${\bf IAc}$)
    such that the diagram below commutes:
\[\xymatrix{A \ar[d]_{\iota}\ar[rr]^{f}&&B\ar[d]^{\kappa}\\A\ar[rr]_{f}&&B.}\]
 \end{enumerate} \end{defi} 
The following theorems complete
the results of Theorem \ref{theo:changeimpl} and present it as
defining a functor from ${\bf Co_{imp}(IA)}$ into ${\bf IA}$ (${\bf
  Co_{imp}(IAc)}$ into ${\bf IAc}$). We need to perform some
computations before defining the needed functor (the
\emph{construction of algebra functor} in \cite{kn:street}). Our
aspiration now is to generalize to the current general platform the
constructions appearing in \cite[Def. 4.7, 4.15]{kn:ocar2} of
operations in the set $\mathcal P(\Pi)$ for $\Pi$ the set of stacks of
an AKS.
\begin{theo} \label{theo:implicativeiota} Let $\mathcal A$ be an implicative algebra, $\iota:A \to A$ a compatible Alexandroff interior operator,
$(A_{\iota},\cdot_\iota, \to_{\iota})$ as in Theorem
  \ref{theo:changeimpl}, call $\mathcal S_\iota=\iota(\mathcal
  S)=\mathcal S \cap A_\iota$ and define the elements
  $\ck_{\iota}:=\iota(\nu \ci \ck), \cs_{\iota}:=\iota(\nu (\nu \ci
  (\nu \ci)) \cs) \in \mathcal S \cap
  A_\iota$. Then: \begin{enumerate} \item For all $a,b \in A_\iota$,
    $\ck_\iota \leq a\to_\iota(b \to_\iota a)$; \item For all $a,b,c
    \in A_{\iota}$ $\cs_{\iota} \leq a\rightarrow_\iota
    (b\rightarrow_{\iota}
    (c\rightarrow_{\iota}a\cdot_{\iota}c\cdot_{\iota}(b\cdot_{\iota}c)))$
\item The quadruple $(A_\iota,\leq,\to_\iota,\mathcal S_\iota)$ is an
  implicative algebra. This implicative algebra will be denoted as
  $\mathcal A_\iota$. \item In the commutative triangle below --that
  is just the decomposition of $\iota$ as an epi/mono--, all the
  morphisms are arrows in the category ${\bf IAc}$. \begin{equation*}
    \xymatrix{A\ar[rr]^{\iota}
      \ar[rd]_{\iota'}&&A\\ &A_\iota\ar[ru]_{\mathrm{inc}}&} \end{equation*} \end{enumerate} \end{theo} \begin{proof}
  For simplicity, we write $a \cdot b =ab$. It is clear that the
  elements $\ck_{\iota}$ and $\cs_{\iota}$ are in $\mathcal
  S_\iota$. \begin{enumerate} \item Given $a,b$ as above, we have that
    $\ck ab \leq a$ and $\ck a \leq b\to a$ and then $\ci (\ck a) \leq
    \iota(\ck a)\leq b \to_\iota a$, where we used the hypothesis that
    for all $d \in A$, $\ci d \leq \iota(d)$. Then by Lemma
    \ref{lema:asocas} $\nu \ci \ck a \leq b \to_\iota a$. Using
    adjunction and applying $\iota$ we deduce that $\iota(\nu \ci \ck)
    \leq a \to_{\iota} b \to_\iota a$. \item For all $a,b,c\in
    A_{\iota}$, $\cs a b c\leq a c (b c)$ and then $\cs a b c\leq a
    \cdot_{\iota}c \cdot_{\iota}(b \cdot_{\iota}c)$, where we used the
    fact that $ab \leq c_{\iota}(ab)=a \cdot_\iota b$. By the original
    adjunction in $A$, $\cs a b\leq c\rightarrow
    a\cdot_{\iota}c\cdot_{\iota}(b\cdot_{\iota}c)$, and then by
    applying $\iota$ and the definition of $\to_\iota$ we have that
    $\ci (\cs ab)\leq \iota(\cs a b)\leq c\rightarrow_{\iota}
    a\cdot_{\iota}c\cdot_{\iota}(b\cdot_{\iota}c)$, the first
    inequality comes from the hypothesis that for all $a \in A, \ci a
    \leq \iota(a)$. Hence: $\iota(\nu \ci (\cs a))\leq
    \iota(b\rightarrow (c\rightarrow_{\iota}
    a\cdot_{\iota}c\cdot_{\iota}(b\cdot_{\iota}c)))$ that can be
    deduced as before by using successively Lemma \ref{lema:asocas},
    the usual adjunction and the monotony of $\iota$. Similarly than
    before we deduce that $\ci (\nu \ci (\cs a))\leq
    b\rightarrow_{\iota} (c\rightarrow_{\iota}
    a\cdot_{\iota}c\cdot_{\iota}(b\cdot_{\iota}c))$ Now, if we apply
    twice Lemma \ref{lema:asocas} we have that: $\nu (\nu \ci (\nu
    \ci)) \cs a \leq \nu \ci (\nu \ci) (\cs a) \leq \ci (\nu \ci (\cs
    a))$ Then, the inequality $\iota(\nu (\nu \ci (\nu \ci)) \cs)\leq
    a\rightarrow_\iota (b\rightarrow_{\iota}
    (c\rightarrow_{\iota}a\cdot_{\iota}c\cdot_{\iota}(b\cdot_{\iota}c)))$
    is deduced by the adjunction property together with the monotony
    of $\iota$. \item To check that $\mathcal A_\iota$ is an
    implicative algebra we proceed as follows. The conditions
    concerning the properties of the operations in particular the
    commutation of the implication $\to_\iota$ with infinite meets are
    the content of Theorem \ref{lema:change}. The fact that the
    combinators $\ck, \cs$ of $\mathcal A_i$ are in the separator,
    follows from parts (1) and (2) above because they have to be
    larger than the corresponding combinators
    $\ck_{\iota}\,,\,\cs_{\iota}$. Notice that if for $a,b \in
    A_\iota$ the pair of elements $a,a \to_\iota b \in\mathcal S \cap
    A_\iota$, first we conclude that $a\to b$ is also in $\mathcal S$
    being larger than $a \to_{\iota} b$. As $\mathcal S$ is a
    separator, we deduce that $b \in \mathcal S \cap A_\iota$. Then,
    $\mathcal S_\iota$ is a separator. \item \begin{enumerate} \item
      We prove that the map $\iota':A \to A_\iota$ is computationally
      dense. We need to prove the following assertions: (i)
      $\iota'(\mathcal S) \subseteq \mathcal S \cap A_\iota$; (ii)
      $\exists r \in \mathcal S \cap A_\iota$ such that $\forall s \in
      \mathcal S\,,\,a \in A\,,\, r\cdot_\iota \iota'(s)\cdot_\iota
      \iota'(a) \leq \iota'(sa)$; (iii) For $X \subseteq A$,
      $\iota'(\bigcurlywedge X)=\bigcurlywedge_\iota(\iota'(X))$; (iv)
      There is a function $h : \mathcal S \cap A_\iota \to \mathcal S$
      and an element $t \in \mathcal S \cap A_\iota$ such that for all
      $s \in \mathcal S \cap A_\iota$, $t\cdot_\iota\iota'(h(s))\leq
      s$. Condition (i) is satisfied by definition and (iii) follows
      from the fact that $\iota$ is an Alexandroff operator. In order
      to prove (iv) take $h$ to be the inclusion map and
      $t=\ci_{\iota}= \cs_\iota \cdot_\iota\ck_\iota
      \cdot_\iota\ck_\iota$. For the proof of (ii) we proceed as
      follows: as $\iota(s) \leq s$ and $\iota(a) \leq a$ we deduce
      that $\nu \cdot \ci \cdot\iota(s) \cdot \iota(a) \leq \nu\cdot
      \ci \cdot s \cdot a \leq \ci\cdot (s \cdot a) \leq \iota(s\cdot
      a)$.  Using the adjunction we obtain that $\nu \cdot \ci \cdot
      \iota(s)\leq \iota(a)\rightarrow\iota(s \cdot a)$ and
      $\ci\cdot(\nu\cdot\ci\cdot\iota(s))\leq\iota(\nu\cdot\ci\cdot\iota(s))\leq
      \iota(a)\rightarrow_{\iota}\iota(s\cdot a)$. Using Lemma
      \ref{lema:asocas} and again the adjunction property we have that
      $\iota(\nu\cdot\ci\cdot(\nu\cdot\ci))\leq
      \iota(\iota(s)\rightarrow(\iota(a)\rightarrow_{\iota}\iota(s\cdot
      a)))=\iota(s)\rightarrow_\iota(\iota(a)\rightarrow_{\iota}\iota(s\cdot
      a))$. If we call $r=\iota(\nu\cdot\ci\cdot(\nu\cdot\ci)) \in
      \iota(\mathcal S)\subset \mathcal S\cap A_{\iota}$, then the
      inequality above becomes :
      $r\cdot_{\iota}\iota(s)\cdot_{\iota}\iota(a)\leq\iota(s \cdot
      a)$ by adjunction in $A_{\iota}$. \item To prove that the
      inclusion is a morphism of ${\bf IAc}$ (see Definition
      \ref{defi:morfimplalgsimpler} and\ref{defi:compdensemor}) we need
      to check that: (i) $\operatorname{inc}(\mathcal S \cap A_i)
      \subseteq \mathcal S$; (ii) $\exists r \in \mathcal S: \forall s
      \in \mathcal S \cap A_\iota\,,\, a \in A_\iota: r(sa) \leq sa$;
      (iii) The inclusion commutes with infinite meets, i.e. if $X
      \subseteq A_\iota$ then $\bigcurlywedge_{\iota}X=\bigcurlywedge
      X$. (iv) $\exists h: \mathcal S \to \mathcal S \cap A_\iota$ and
      $\exists t \in \mathcal S: \forall b \in \mathcal S\,,\, th(b)
      \leq b$. It is clear that the first condition is satisfied. For
      the second, just take $r=\ci \in \mathcal S$ --the combinator
      $\ci$-- and the third is obvious (see Theorem
      \ref{lema:change}). As we know that the interior operator is
      compatible with $\mathcal A$ if we take $h=\iota|_{\mathcal
        S}:\mathcal S \to \mathcal S \cap A_\iota$ ( i.e., $\iota$
      restricted to $\mathcal S$ with codomain $\mathcal S \cap
      A_\iota$) and $t=\ci$, we have that $\ci \iota(b) \leq \iota(b)
      \leq b$ for all $b \in \mathcal
      S$. \end{enumerate} \end{enumerate} \end{proof} 
      \item \label{item:structure}

The above constructions assemble in the following adjoint pair of
functors as illustrated in the diagram.
\[\xymatrix{{\bf IAc} \ar@/^2pc/[rr]^{T} &
  {\hspace*{-0.05cm}\mbox{\larger[1]$\,\,\,\,\,\,\,\perp$}}&
  \ar@/^2pc/[ll]^{V}{\bf Co_{\operatorname{imp}}(IAc)}} .\]
\begin{defi}\label{defi:basicfunctors}
  We define the following functors:\begin{enumerate} \item $T:{\bf
      IAc}\to {\bf Co_{imp}(IAc)} $ in
    objects $T(\mathcal A)=(\mathcal A, \operatorname{id})$; if $f:
    \mathcal A \to \mathcal B$ then $T(f)=f$; \item $U:{\bf Co_{imp}(IAc)} \to {\bf IAc}$
    in objects $U(\mathcal A,\iota)=\mathcal A$, for arrows
    $f:(\mathcal A,\iota) \to (\mathcal B,\kappa)$, $U(f)=f$; \item
    $V:{\bf Co_{imp}(IAc)} \to {\bf IAc}$ in objects $V(\mathcal
    A,\iota)=\mathcal A_\iota$ (see Theorem
    \ref{theo:implicativeiota}), for arrows $f:(\mathcal A,\iota)\to
    (\mathcal B,\kappa)$, $V(f)=f|_{A_\iota}: A_\iota \to B_\kappa$.
\end{enumerate} \end{defi} The fact that these constructions are functors is clear. For example, in the case of $V$, an arrow in the domain
category is an arrow $f:A \to B$ with the property that $\kappa f=f
\iota$. Hence, if $a \in A_\iota$ $\kappa(f(a))=f(\iota(a))=f(a)$ and
then $f(a) \in B_\kappa$. Once this restriction is established it is
clear that for such an $f$, $V(f)=\nu' f \operatorname{inc}$. As
$\nu'$ and $\operatorname{inc}$ are maps in ${\bf IAc}$ (see Theorem
\ref{theo:implicativeiota}) we deduce that $V(f)$ is a computationally
dense morphism. \begin{nota} The functor $T$ is what in
  \cite{kn:street} is called the identity functor or the inclusion
  functor. \end{nota}
\begin{theo} \label{theo:two adjunctions} In the notations of Definition \ref{defi:basicfunctors} we have the following adjunction $\, T \dashv
V$. \end{theo} \begin{proof} \begin{enumerate} 
    \item By a direct computation one can prove that
    the arrows defined for $(\mathcal A,\iota)\in {\bf
      Co_{\operatorname{imp}}(IAc)}$, $\mathcal B \in {\bf
      IAc}$: \[\varepsilon_{(\mathcal A,\iota)}: TV((\mathcal
    A,\iota))=(\mathcal A_\iota,\operatorname{id})\longrightarrow
    (\mathcal A,\iota)\quad;\quad \eta_{\mathcal B}: \mathcal B \to
    VT(\mathcal B)= \mathcal B,\] as
\[\varepsilon_{(\mathcal A,\iota)}= \operatorname{inc}: (\mathcal A_{\iota},\operatorname{id}) \to (\mathcal A,\iota)\quad;\quad
\eta_{\mathcal B}= \operatorname{id}_\mathcal B: \mathcal B \to
\mathcal B,\] satisfy the conditions that guarantee that the pair of
functors $(T,V)$ is an adjoint pair. For example, the fact that
$\varepsilon$ is computationally dense is the content of
Theorem\ref{theo:implicativeiota}. The fact that $\varepsilon$
commutes with the operators is simply the assertion $\iota
\operatorname{inc}=\operatorname{inc} \operatorname{id}$, which is
valid in $\mathcal A_{\iota}$. It is clear that the identity map is a
computationally dense morphism. \end{enumerate} \end{proof}

\section{From implicative comonads to triposes, revisiting the equivalence results}\label{section:IAtoTripos}
\item We summarize the equivalence
results of \cite{kn:ocar,kn:ocar2} as: \begin{enumerate} \item[a. ]
  Given the AKS named $\mathcal K$ we consider the IAs: $A(\mathcal
  K),\,A_\bullet(\mathcal K)$ and the IOCA: $A_\perp(\mathcal K)$.
  Then, the triposes ${\bf H}_{A(\mathcal K)}$, ${\bf H}_{A_\bullet(\mathcal K)}$ and ${\bf H}_{A_\perp(\mathcal K)}$ and ,
  are equivalent. \item[b. ] Given an implicative algebra $\mathcal B$
  we consider the associated IAs: $A(K(\mathcal B))\,, A_\bullet(K(\mathcal B))$, and the IOCA: $A_\perp(K(\mathcal B))$. The
  corresponding triposes ${\bf H}_{\mathcal B}\,,{\bf H}_{A(K(\mathcal
    B))}\,, {\bf H}_{A_\bullet(K(\mathcal B))}\,,{\bf
    H}_{A_\perp(K(\mathcal B))}$ are equivalent. \end{enumerate} For
the definitions of the structures mentioned in the above equivalence
assertions, the reader can consult Paragraph \ref{item:implvsfoca} and
also \cite[Section 4]{kn:ocar}. Moreover, some recollections will
appear in the treatment that follows. The definition of the map
$A_\perp$ from AKSes to IOCAs that is used above, is briefly recalled
in Observations \ref{obse:perpioca} and \ref{obse:mainexam}. For the
definition of $A_\bullet$ as a map from AKSes to IAes, see the
considerations of the next paragraph (for more details consult
\cite{kn:ocar2} where this construction was introduced).

\item\label{item:adaptation} Our
primary purpose, in this section and the next, is to reinterpret the
 equivalence results mentioned above, as special cases of the
categorical considerations that we develop in the present paper.

\smallskip \noindent The reader probably noticed that not all the
structures considered above are in the category of implicative
algebras. Indeed, the ones that involve Streicher's
construction (that correspond to the closure operator given by double
perpendicularity, i.e. the ones centered around the construction named
as $A_\perp$), belong to the realm of IOCAs and will be treated in
Section \ref{section:implocas}.  In this section,  we concentrate in the
study of the equivalences in the family of IAs.

\smallskip \noindent The triposes appearing in (a.) are the
\emph{Krivine's, Streicher's and bullet} triposes respectively. The
equivalence of Krivine's and bullet triposes fit squarely in the
theory we develop being particular cases of the general implicative
algebras $U(\mathcal A,\iota)=\mathcal A$ and $V(\mathcal
A,\iota)=\mathcal A_\iota$ that are proved to have equivalent
associated triposes (when the interior operator satisfies a certain
additional compatibility condition) in Theorem
\ref{theo:equivalenciaenalexandroff}. Indeed, in the case that
$\mathcal A=A(\mathcal K)$, for $\mathcal K$ an arbitrary AKS, and
$\iota$ is the Alexandroff approximation of Streicher's double
perpendicular operator (see Theorem \ref{theo:existencealaprox})
corresponding to the pole of the AKS, we have that $\mathcal
A_\iota=A_\bullet(\mathcal K)$ (compare with the original version
\cite[Paragraph XXVIII, Theorem 6.6]{kn:ocar2}).

\smallskip \noindent Concerning the equivalences of (b.), it is clear
that once (a.) is established, all we need is to prove the assertion
in the case (for example) of $A(K(\mathcal B))$ and $\mathcal B$ for
an arbitrary implicative algebra $\mathcal B$. This is the content of
Theorem \ref{theo:equivalenciaenadjuntos} that corresponds
--indirectly-- to \cite[Theorem 6.13]{kn:ocar2}. We refer the reader
to the considerations at the beginning of Paragraph \ref{item:ingrain}
for the reasons why we pay special attention to results like the
theorem mentioned above.

\item\label{item:theresults} We start by proving a general result
  that establishes the connection between the categorical viewpoint
  adopted here and the proofs of the equivalence results presented in
  our previous work.
  \begin{theo}\label{theo:general} Let $\mathcal A$
    and $\mathcal B$ be IAs and assume that we have two maps
    $f:A \to B$ and $g:B\to A$ with the following properties.
    \begin{enumerate} \item \label{item:thereare1}
      {\tt Properties of $f: A \to B$.}
      \begin{enumerate}
          \item \label{item:compseparator1}$f(\mathcal S_\mathcal A)
            \subseteq \mathcal S_\mathcal B$;
          \item $\exists s\in \mathcal S_\mathcal B: \forall (a \vdash
            a'),\, \,\, s \leq f(a\to_A a')\to_{B} f(a)\to_B f(a')$;
\item The map $f:A \to B$ is monotonic.
      \end{enumerate}
\medskip
    \item {\tt Properties of $g:B \to A$ -and $f$.}
      \begin{enumerate}
        \item $g(\mathcal
      S_\mathcal B) \subseteq \mathcal S_\mathcal A$.
    \item
      \begin{enumerate}\label{item:existste1}\item $\exists t\in
          \mathcal S_\mathcal A:\forall (f(a) \vdash f(a')),
          \,\,\,t\leq g(f(a) \to_B f(a')) \to_A a \to_A
          a'$; \item \label{item:comdense1}$fg(b)\to_B b' = b \to_B
          b'$ for every $b,b'\in
          B$.\end{enumerate} \item The map $g: \mathcal B \to \mathcal A$ is monotonic. \end{enumerate} \end{enumerate}
In that situation for all sets $I$ the maps given by post
    composition with $f:A \to B$ $f_{*I}:=f-:{\bf H}_{\mathcal A}(I)
    \to {\bf H}_{\mathcal B}(I)$ are monotonic, reflect the order and
    are essentially surjective. Then $f_{*I}$ is an equivalence of
    preorders.
  \end{theo}
  \begin{proof}Crearly, $f_{*I}$ is a map
    between indexed preorders. \textit{Monotonic:} Assume that we have
    $\varphi,\psi \in A^I$ with $\varphi\vdash_I\psi$ then $\exists
    r\in \mathcal S_\mathcal A: \forall i\in I\,\,\, r \cdot
    \varphi(i)\leq \psi(i)$ i.e., $\forall i\in I\,,\,
    r\leq\varphi(i)\to \psi(i)$. Since $f$ is monotonic we have that
    $\forall i\in I\,,\,f(r)\leq f(\varphi(i)\to \psi(i))$. From that
    and using (1) condition (b), we deduce that for some $r \in
    \mathcal S_{\mathcal A}\,,\,s \in \mathcal S_{\mathcal B}$ :\,
    $s\cdot f(r)\leq s\cdot f(\varphi(i)\to \psi(i))\leq
    f(\varphi(i))\to f(\psi(i))$ which implies that $\exists \ell \in
    \mathcal{S}_\mathcal B: \forall i\in I\,\,\, \ell\cdot
    f(\varphi(i))\leq f(\psi(i))$, which means that $f\varphi \vdash_I
    f\psi$ i.e., that $f_{*I}$ is monotonic. \textit{Reflective:} Now
    we want to prove that it reflects the order. Suppose that
    $f\varphi\vdash_I f\psi$, then $\exists \ell\in
    \mathcal{S}_\mathcal B$ such that $ \forall i\in I\,\,\, \ell\cdot
    f(\varphi(i))\leq f(\psi(i))$, hence $\forall i\in I\,,\, \ell\leq
    f(\varphi(i))\to f(\psi(i))$ and using that $g$ is monotonic we
    obtain that $\forall i\in I\,\,\,g(\ell)\leq g(f(\varphi(i))\to
    f(\psi(i)))$. Now using (2) condition (b,i), we can guarantee that
    $\exists t\in \mathcal S_\mathcal A: \forall i\in I\,\,\, t\cdot
    g(\ell)\leq t\cdot g(f(\varphi(i))\to f(\psi(i)))\leq
    \varphi(i)\to \psi(i)$ which means that $\exists t'\in
    \mathcal{S}_\mathcal A: \forall i\in I\,\,\, t'\leq \varphi(i)\to
    \psi(i)$ i.e., $\varphi\vdash_I\psi$. Concerning the essential
    surjectivity we proceed as follows. Take $\gamma \in B^I$ and
    consider $g\gamma \in A^I$. We want to show that $fg\gamma \cong
    \gamma$. For any $b \in B$ we have that $\ci \leq fg(b) \to_B
    fg(b)$ and applying (2) condition (b,ii), we obtain that $\ci \leq
    b \to_B fg(b)$ for every $b\in B$. This means that
    $\operatorname{id} \vdash_B fg$ and then that $\gamma
    \vdash_Ifg\gamma$. Conversely, using the fact that $\ci \leq b
    \to_B b =f(g(b))\to_B b$ (c.f. hypothesis (2) condition (b,ii)),
    we deduce that $fg \vdash_B \operatorname{id}$ that implies
    $fg\gamma \vdash_I \gamma$. Putting together both results we
    obtain that $f_{*I}(g\gamma)\cong \gamma$. \end{proof}

  \begin{obse} Notice that of the hypothesis of the above Theorem
    \ref{theo:general} for the map $f: \mathcal A \to \mathcal B$, the
    first two conditions (1,a) and (1,b) are the same than the first
    two conditions of the definition of applicative morphism of
    implicative algebras (Definition
    \ref{defi:morfimplalgsimpler}). The third condition (1,c) is a
    weakening as we are changing the condition of compatibility with
    infinite meets by its consequence the condition of the monotony
    for the function $f$.  Comparing the hypothesis (2) for the map
    $g$ with the condition for the map $h$ in Definition
    \ref{defi:compdensemor} of computationally dense morphism, we
    observe that conditions (2,a) and (2,c) appear also as
    conditions  for $h$ and if the hypothesis (2) part (b,ii)
    holds, reasoning as in the proof of the theorem, we obtain that
    $\ci \leq b \to b = fg(b) \to b$ for all $b \in \mathcal
    S_B$. This is exactly the condition that the map $h$ in the
    definition of computationally dense morphism has to satisfy.  It
    is then clear, that except for the monotony of $f$ (that is weaker
    than the preservation of infinite meets) any morphism that is in
    the hypothesis of the above theorem is a computationally dense
    morphism of IAs.
  \end{obse}
  In the next Corollary, we show that two IAs related by a morphism of
  the type considered above, induce equivalent triposes. This will
  imply the equivalence of Krivine's and the bullet triposes.

  \begin{coro}\label{coro:equivalenciaconlaiota} Let $\mathcal A$
    and $\mathcal B \in {\bf IA}$ and assume that the following
    conditions for the implication and the separators are
    satisfied:
    \begin{enumerate}
    \item There is an interior operator
      --a comonad-- $\iota:B \to B$ satisfying the following
      properties: $A=\{b \in B: \iota(b)=b\}$, $a \to_A a'=\iota(a
      \to_B a')$ and $b \to_B b'=\iota(b)\to_B b'\,,\, a,a' \in A\,,\, b,b' \in B$.
    \item
      $\iota(\mathcal S_\mathcal B) = \mathcal S_\mathcal B\cap A =
      \mathcal S_{\mathcal A}$. \end{enumerate} In this situation for
    all sets $I$ the injections ${\bf H}_{\mathcal A}(I) \subseteq {\bf
      H}_{\mathcal B}(I)$, preserve and reflect the order and are essentially
    surjective. Hence, the injection is an equivalence of
    preorders. \end{coro} \begin{proof} We prove that the set
    theoretical maps $f:A\to B\,,\,g:B \to A$ defined as
    $f:=\operatorname{inc}$ and $g:=\iota$ satisfy the conditions of
    Theorem \ref{theo:general}.

  \begin{itemize}
\item[(1.a,2.a)] The conditions $f(\mathcal S_\mathcal A) \subseteq
  \mathcal S_\mathcal B$ and $g(\mathcal S_\mathcal B)
  \subseteq\mathcal S_\mathcal A$ are clear. The first is simply
  $\mathcal S_{\mathcal A} \subseteq \mathcal S_{\mathcal B}$ and the
  second is $\iota(\mathcal S_\mathcal B) \subseteq \mathcal
  S_\mathcal A$ that follows directly from (in fact it is equivalent
  with) the hypothesis (2).
  \item[(1.b)] $\exists s\in \mathcal S_\mathcal B: \forall (b \vdash
    b') \,\,\,\,s\cdot f(b\to_A b')\leq f(b)\to_B f(b')$. To prove
    this assertion, consider $s=\ci$ and we have $\ci (b \to_A b')
    \leq b \to_A b' =\iota(b \to_B b')\leq b \to_Bb'$;
\item[(2.b.i)] $\exists t\in \mathcal S_\mathcal A: \forall (f(a)
  \vdash f(a')) \,\,\,\,t\cdot g(f(a) \to_B f(a'))\leq a \to_A
  a'$. Consider $t=\ci$ and we have $\ci \cdot
  \iota(\operatorname{inc}(a) \to_B \operatorname{inc}(a'))
  \leq\iota(\operatorname{inc}(a) \to_B \operatorname{inc}(a')) = a
  \to_A a'$;
\item[(2.b.ii)] $fg(b)\to_B b' = b \to_B b'$ since we have
  $\iota(b)\to_B b' = b \to_B b'$ for every $b,b'\in B$;
\item[(1.c,2.c)] The conditions of monotony are clearly satisfied. 
  \end{itemize}
\end{proof}
  \begin{coro}\label{coro:fyginversas} Let
    $f:{\mathcal A}\to{\mathcal B}$ be an invertible arrow in ${\bf
      IA}$ with inverse $g$, then it induces an equivalence of ${\bf
      H}_{\mathcal A}$ and ${\bf H}_{\mathcal B}$ as indexed HPO.
\end{coro}
\begin{proof} We prove
  that the maps $f$ and $g$ satisfy the conditions necessary to
  guarantee the equivalence of indexed preorders appearing in Theorem
  \ref{theo:general}. Regarding the map $f$ it is clear that being a
  morphism of implicative algebras it satisfies the first three
  conditions of the theorem. For $g$ the condition (2,b,i) is deduced
  as follows: from the fact that $g$ is applicative we deduce that
  there is a $t \in \mathcal S_{\mathcal A}$ such that, for all $f(a)
  \vdash f(a')$, $t \cdot g(f(a) \to_B f(a')) \leq g(f(a)=a \to_A
  g(f(a'))=a'$. Condition (2,b,ii) follows from the fact that $f$ and
  $g$ are set--theoretical inverse. Conditions (2.a) and (2.c) are
  direct consequences of the properties of applicative morphisms.
\end{proof}

\begin{rema}\label{rema:relevant} Regarding the proof
 of the preceding corollary and for future reference, it is worth
 mentioning that we did not use fully the hypothesis that the maps $f$
 and $g$ are morphisms in the category ${\bf IA}$ as we could do with
 the substitution of the condition that they preserve infinite meets, by the condition that
 the two maps are monotonic --that is weaker.
\end{rema}

As we mentioned before, the result that follows is a generalization of
the equivalence between Krivine's tripos and ``bullet''
tripos.

\begin{theo}\label{theo:equivalenciaenalexandroff} Consider
  the functors $V,U:{\bf Co_{\operatorname{imp}}(IA)} \to{\bf IA}$ and
  let $(\mathcal A,\iota) \in {\bf Co_{\operatorname{imp}}(IA)}$ be an
  object with $\iota$ a strongly compatible comonad. Then, the triposes
  associated with $V(\mathcal A,\iota)$ and $U(\mathcal A,\iota)$, are
  equivalent.
\end{theo}
\begin{proof} The result follows immediately
  from Corollary
  \ref{coro:equivalenciaconlaiota}.
\end{proof} \item\label{item:ingrain}

Next, we address the categorical treatment of the assertions about the
equivalence of triposes that we recalled in (b.) at the beginning of
this section. As we mentioned before, it is enough to deal with the
situation of the equivalence between the triposes associated with the
original IA named $\mathcal B$ and $A(K(\mathcal B))$. One of the main
goals in the series of papers: \textit{Ordered combinatory algebras
  and realizability} (c.f. \cite{kn:ocar}), \textit{Realizability in
  ordered combinatory algebras with adjunction} (c.f. \cite{kn:ocar2})
and the current one, is to ingrain the theory of classical
realizability in the field of ordered combinatory algebras (in its
many guises ranging from IOCAs to IAs). With that purpose in mind
--amounting to the ``algebraization of realizability''-- , we proved
that the models (triposes) that we called Krivine's and Streicher's
models, could be produced --up to equivalence-- by conveniently chosen
IOCAs or IAs. This was settled in the mentioned papers, in particular
in \cite[Theorem 5.16]{kn:ocar} and \cite[Theorem 6.13]{kn:ocar2} and
the commentaries that follow therein. In the next theorem, we show that the map that induces the required
equivalence is the counit of the
adjunction $A \dashv K$ (Theorem \ref{theo:mainadj}) .
  
First, we collect some notations and results. The functors $A:{\bf
  AKS}\to {\bf IA}\,,\,K:{\bf IA}\to {\bf AKS}$ are an adjoint pair
with counit given for $\mathcal B \in {\bf IAc}$ as the map
$\varepsilon_{\mathcal B}:\mathcal P(B) \to B$ defined for every $P
\subseteq B$ as $\varepsilon_{\mathcal B}(P)=\bigcurlywedge P$. Recall
(see Paragraph \ref{basicdiagram} item \eqref{item:AK}) that the basic
set supporting $A(K(\mathcal B)) \in {\bf IA}$ is $\mathcal P(B)$ with
the following structure of IA. If $\mathcal B=(B,\leq,\to,\mathcal
S_{\mathcal B})$, then $A(K(\mathcal B))=(\mathcal P(B), \leq,
\leadsto,\mathfrak S)$
where:

\begin{enumerate}
\item For $C,D \subseteq B\,,\,C \leq D:=C
  \supseteq D$;
\item For $C,D \subseteq B\,,\,C \leadsto D:=\{c \to
  d:c \leq \bigcurlywedge C, d \in D\}$;
\item If we call $\mathfrak k ,\,\mathfrak s$ and
  $\ck,\,\cs$, the basic combinators of $AK(\mathcal B)$ and $\mathcal
  B$ respectively, we have that $\mathfrak k \subseteq
  {\uparrow}\,\ck\,$ and $\mathfrak s \subseteq {\uparrow}\,\cs\,$;
\item $\mathfrak S=\{C \subseteq B: \bigcurlywedge C \in \mathcal
  S_B\}$.
\end{enumerate}

\medskip

The maps $f$ and $g$ of the theorem that follows have already been
considered in the paper (for example in Theorem \ref{theo:mainadj})
and below we put their properties are together to produce the map that
realizes the procured equivalence.  Notice that the map called $g$
 just above, is the map that was introduced to guarantee the
computational density of $\varepsilon_{\mathcal B}$ in the above
mentioned theorem (see part (1b) of its proof).
\begin{theo}\label{theo:equivalenciaenadjuntos}
  For each $\mathcal B \in {\bf IAc}$ the counit of the adjunction $A
  \dashv K\,,\,\varepsilon_\mathcal B: \mathcal P(B) \to B$, induces
  an equivalence of the triposes associated to $A(K(\mathcal B))$ and
  $\mathcal B$.
\end{theo}

\begin{proof} We deduce the result of the
  equivalence of the tripos associated to $A(K(\mathcal B))$ and
  $\mathcal B$ by a direct application of Theorem \ref{theo:general}
  using the maps $f:=\varepsilon_\mathcal B$ and $g:B\to \mathcal
  P(B)$ defined as $g(b)={\uparrow}b$. We check the hypothesis of
  Theorem \ref{theo:general} for the maps $f$ and $g$. The hypothesis
  (1) that involves only the map $f$, are satisfied as a consequence
  of the fact that $f=\varepsilon_{\mathcal B}$ is a morphism in ${\bf
    IAc}$ (see Theorem \ref{theo:mainadj}). It is clear that
  $\varepsilon_{\mathcal B}({\uparrow}a)=a$, hence
  $fg=\operatorname{id}$ and the condition (2.b.ii) follows
  trivially. The fact that $g$ sends the separator into the separator,
  as well as its monotony (conditions (1) and (3)), are proved in
  Theorem \ref{theo:mainadj}. In our case the condition (2.b.ii) of
  Theorem \ref{theo:general} means that there is an element $t \in
  \mathcal S_{\mathcal P(B)}$ with the property that $t \cdot
  ({\uparrow}(\bigcurlywedge P \to \bigcurlywedge Q)) \supseteq P
  \leadsto Q$, for all $P,Q$ such that $\varepsilon_{\mathcal B}(P)
  \vdash \varepsilon_{\mathcal B}(Q)$. As $\bigcurlywedge P \to \inf
  Q=\inf(P \leadsto Q)$\,(Lemma \ref{lema:equalinf})\, the requirement
  for $t$ becomes $t \cdot {\uparrow}\inf(P \leadsto Q) = t \cdot
  \overline{(P \leadsto Q)} \supseteq P \leadsto Q$. The equality
  ${\uparrow}\inf(P \leadsto Q) = \overline{(P \leadsto Q)}$\, is
  obtained by a direct computation that is performed for example in
  \cite[Lemma 5.15]{kn:ocar}).  Hence, if we take $t=\cI$ we have that
  $\cI \cdot \overline{(P \leadsto Q)} \supseteq \cI \cdot (P \leadsto
  Q) \supseteq P \leadsto Q$.
  
\end{proof}

\section{The category of implicative ordered combinatory algebras}
\label{section:implocas}
\item In this section, and following
the path already established for IAs, we start by defining the
category of implicative ordered combinatory algebras, with objects the
class of IOCAs and the arrows defined below. As we mentioned before,
this becomes necessary in order to put Streicher's constructions (see
\cite{kn:streicher}) in a more comprehensive categorical
perspective. As Streicher's proposals do not involve the consideration
of implicative algebras, we have to step aside from the categories
introduced heretofore and deal with contexts where application and
implication do not satisfy the \emph{full adjunction property}. To be
able to work in that situation it is necessary to introduce the
so-called \emph{adjunctor combinator} (see \cite{kn:ocar,kn:ocar2}
for the scrutiny of this situation). In order to avoid repetitions, in
the definition of the categories of IOCAs and in the proofs of the
main results, we adopt a rather expeditious style omitting
 many details as the proofs and
constructions are similar to the ones performed for IAs.

\begin{obse}\label{obse:perpioca}
  The definition of
  implicative ordered combinatory algebra (a.k.a. IOCA) can be
  obtained by slightly changing the conditions for a FOCA as appears
  in Definition \ref{example:ocas}. Instead of the full adjunction
  condition, we require the existence of an element
  $\ce \in\Phi$\, such that for all $a,b,c \in A$ one of the following
  pair of equivalent conditions are
  satisfied:
  \begin{equation}\label{eqn:iocaunitcounit}
\begin{aligned} a \leq b\to c \Rightarrow ab \leq c \quad&,\quad ab \leq c\Rightarrow \ce a \leq b \to c \\ (b \to c)b \leq c \quad&,\quad
  \ce a \leq b \to ab. \end{aligned}
  \end{equation}
  This subject is treated in detail in \cite{kn:ocar2} where the concept is defined.

  The main example of IOCA is based in Streicher's construction in \cite{kn:streicher} and is briefly recalled in \ref{obse:mainexam}. For more information about this construction consult also \cite{kn:ocar}.
\end{obse}

\medskip \item\label{item:introd} Adapting Definition
\ref{defi:morfimplalgsimpler}, we define the notion of
\emph{computationally dense} (\emph{applicative}) morphism of IOCAs
(compare also with the results of Lemma
\ref{lema:morfimplalgsimpler}).
\begin{defi}\label{defi:morfioca}

Let $\mathcal A=(A,\leq, \cdot,\to,\Phi_\mathcal A)$ and $\mathcal
B=(B,\leq, \cdot,\to,\Phi_\mathcal B)$ be two implicative ordered
combinatory algebras. A \emph{computationally dense morphism} $f:
\mathcal A \to \mathcal B$ is a set-theoretical function
$f:A\rightarrow B$\, such that: \begin{enumerate} \item
  $f(\Phi_{\mathcal A})\subseteq \Phi_{\mathcal B}$; \item $\exists t
  \in \Phi_{\mathcal B}: \forall s\in \Phi_{\mathcal A}, a\in A,
  tf(s)f(a)\leq f(sa)$; \item For all $P \subseteq A$ we have that
  $f(\bigcurlywedge_A P)=\bigcurlywedge_B f(P)$,
  i.e. $f\left(\bigcurlywedge_A\{x: x \in P\}\right)=\bigcurlywedge_B
  \{f(x): x \in P\}$; \item There is a monotonic function
  $h:\Phi_{\mathcal B}\rightarrow \Phi_{\mathcal A}$ such that, $f
  h\vdash_{\Phi_{\mathcal B}}\operatorname{id}_{\Phi_{\mathcal
      B}}$. This last condition is equivalent to the following:
  $\exists t \in \Phi_{\mathcal B}:\forall b \in
  \operatorname{\Phi_{\mathcal B}}, tf(h(b)) \leq b$. \end{enumerate}
In case that $f$ satisfies only the first three conditions we say that
it is an applicative morphism of IOCAs.
\end{defi} \begin{obse}\label{iocaimplia} \begin{enumerate} \item Recall that in the same manner than for IAs, the relation $a \vdash b$ for $a,b \in A$ ($A$ a IOCA) can be characterized by any of the following two conditions:
  (i) $\exists s \in \Phi$ such that $sa \leq b$; (ii) $a \to b \in
    \Phi$.  \item In Lemma \ref{lema:morfimplalgsimpler} that is valid
    for IAs, we characterized morphisms that satisfy a condition like
    (2) above, in terms of the implication rather than the
    application. Below we discuss similar properties for the case of
    IOCAs. Let $\mathcal A$ and $\mathcal B$ be IOCAs and consider the
    following condition for a function $f:A \to B$: \[\exists t \in
    \Phi_{\mathcal B}, \forall a,a', \textrm{if}\,\, a \vdash a', \textrm{then } t f(a
    \to a')\leq f(a)\to f(a')\quad\quad (2'),\] and notice that in the
    context of Definition \ref{defi:morfioca}, condition (2) implies
    condition $(2')$ but in general the second is
    weaker. \begin{enumerate} \item Assuming $(2)$, if we have that $a
      \vdash a'$, $a \to a' \in \mathcal S_\mathcal A$ and then there
      is a $\ell$ such that $\ell f(a \to a')f(a) \leq f((a \to a')a)
      \leq f(a')$ by \eqref{eqn:iocaunitcounit}. Then, $\ce (\ell f(a
      \to a')) \leq f(a) \to f(a')$. Hence we deduce (using Lemma
      \ref{lema:asocas} as before), that there is an element $t \in
      \Phi_\mathcal B$ such that for all $a \vdash a'$, $tf(a \to a')
      \leq f(a) \to f(a')$. \item Assuming $(2')$, if we take $s \in
      \Phi_{\mathcal A}$ and $a \in A$, it follows by the
      half--adjunction condition \eqref{eqn:iocaunitcounit} that $\ce
      \!s \leq a \to sa$ and then that $a \to sa \in \Phi_\mathcal A$.
      Hence, in accordance with condition $(2')$, there is an element
      $t \in \Phi_\mathcal B$ such that $tf(\ce \!s) \leq tf(a \to
      sa)\leq f(a) \to f(sa)$ for all $a,s$ as above. Using again
      condition \eqref{eqn:iocaunitcounit}, we deduce that for the
      element $t \in \Phi_\mathcal B$ we have that $tf(\ce \!s) f(a)
      \leq f(sa)$ for all $s \in \Phi_\mathcal A$ and $a \in
      A$. \end{enumerate} Thus we obtain that $(2')$ implies a slight
    weakening of condition (2). \item In the case of an IA, that can
    be viewed as a IOCA (with $\ce=\ci$), it is
    clear that a computationally dense morphism of IAs is also a
    computationally dense morphism of IOCAs and vice versa. The same
    holds for applicative morphisms. \item The composition of two
    computationally dense morphisms of IOCAs is computationally dense,
    and the same holds for applicative morphisms. This assertion can
    be proved in a similar manner than for implicative
    algebras. \end{enumerate} \end{obse}
\begin{defi} We define as ${\bf IOCA}$ and ${\bf IOCAc}$ the categories that have objects the class of IOCAs and as arrows, the first one
the applicative morphisms of IOCAs and the second one the
computationally dense morphisms. \end{defi} \begin{obse} Given
  $\mathcal A \in {\bf IOCA}$ we can define the indexed Heyting
  preorder ${\bf H}_\mathcal A$ in the same manner than for
  IAs--eventually taking care of the combinators that in this
  case should be modified by applications of the adjunctor $\ce$
  whenever it is necessary in the computations and proofs. Moreover,
  this indexed Heyting preorder is a tripos (see \cite[Definition 3.16,
    Theorem 3.17]{kn:ocar2} for these assertions).
\end{obse} \item\label{item:nextweshow} Regarding the missing points of
the assertion of item (a.) at the beginning of Section
\ref{section:IAtoTripos}, we need to prove that the triposes
associated to the IOCAs $A_\perp(\mathcal K)$ and $A(\mathcal K)$, are
equivalent \,(recall \cite[Definition 5.10]{kn:ocar} or
\cite[Definition 6.1]{kn:ocar2}). This is the content of \cite[Lemma
  5.5]{kn:ocar} and its proof in the current set up follows after a
few preparations, that begin with the reformulation --without proofs--
of Theorem \ref{theo:general} and Corollary
\ref{coro:equivalenciaconlaiota}. \begin{theo}\label{theo:generaliocas}
  Let $\mathcal A$ and $\mathcal B$ be IOCAs and assume that we have two maps $f:A \to B$ and $g:B \to A$ with the following properties.
  \begin{enumerate} \item \label{item:thereare2}{\tt Properties of
      $f:A \to B$}.
    \begin{enumerate} \item $f(\Phi_{\mathcal A}) \subseteq
      \Phi_{\mathcal B}$;
    \item \label{item:existsese}$\exists s\in
      \Phi_\mathcal B: \forall (a \vdash a')\,\, \,\, s\cdot f(a\to_A
      a')\leq f(a)\to_B f(a')$;
    \item The map $f:A \to B$ is monotonic.
    \end{enumerate}
  \item {\tt Properties of $g:B \to A$ -and $f$}.
    \begin{enumerate}
\item $g(\Phi_{\mathcal B}) \subseteq \Phi_{\mathcal A}$;
\item \begin{enumerate}
\item \label{item:existste}$\exists t\in \Phi_\mathcal
      A: \forall (f(a) \vdash f(a'))\,\, \,\,t\cdot g(f(a) \to_B
      f(a'))\leq a \to_A a'$;
    \item \label{item:comdense}$fg(b)\to_B b' = b \to_B b'$ for every
      $b,b'\in B$.
\end{enumerate}
  \item \label{item:compseparator} The map $g:B \to A$ is monotonic.
    \end{enumerate}
\end{enumerate}
    In that situation
  for all sets $I$ the maps given by post composition with $f:A \to B$
  $f_{*I}:=f-:{\bf H}_{\mathcal A}(I) \to {\bf H}_{\mathcal B}(I)$ are
  monotonic, reflect the order and are essentially surjective. Then
  $f_{*I}$ is an equivalence of preorders.
  \end{theo}

  The following considerations are related to the Definition
  \ref{defi:basicfunctors} and also to Corollary
  \ref{coro:equivalenciaconlaiota}. In a different manner that
  therein, here we have to take into account the fact that the
  application is not determined by the
  implication. \begin{defi}\label{defi:almostbasicfunctors} Let
    $\mathcal B$ be a IOCA with base space $B$ and assume that there
    is an interior operator $\iota: B \to B$ --a comonad-- with the
    properties that $\iota(b)\to_\mathcal B b' = b \to_\mathcal B b'$
    and $\iota(\Phi_\mathcal B) \subseteq \Phi_\mathcal B$. Call
    $B_{\iota}$ the set of $\iota$--open subsets of $B$ and define
    $\to_\iota:B_\iota \times B_\iota \to B_\iota$ as $b \to_\iota b'
    := \iota (b \to_\mathcal B b')$ and $b \cdot_{\iota}b':= \iota(b
    \cdot_{\mathcal B}b') $.
\end{defi}
\begin{obse} \label{obse:mainexam} The main example of the above situation is the case
  of the IOCAs, defined originally by Streicher in \cite{kn:streicher}
  and that were one of the main topics of consideration in
  \cite{kn:ocar} and \cite{kn:ocar2}. They are associated to an AKS
  $\mathcal K$, and denoted as $A_\perp(\mathcal K)$: the basic
  ordered set of this IOCA is $\mathcal P_\perp(\Pi)=\{P \subseteq
  \Pi: ({}^\perp P)^\perp =P\}$ --i.e. the subsets of $\Pi$ fixed by
  the comonad $P \mapsto ({}^\perp P)^\perp :\mathcal P(\Pi)
  \stackrel{\iota_0}{\longrightarrow} \mathcal P(\Pi)$ associated to
  the polarity of $\mathcal K$; the order is the opposite inclusion
  order, and the filter and basic combinators are defined in terms of
  the set $\operatorname{QP}$ and the elements $\cK$ and $\cS$ of
  $\Pi$ as shown in Paragraph \ref{item:AKconstruction} of the current
  paper (see also Definition 4.3 and 4.15 in \cite{kn:ocar2}). If we
  name as $\fpush$ the push operation in the original AKS, the
  operations in the IOCA are given as:
  \begin{enumerate}
    \item Application map $\cdot_{\iota_0}:\mathcal P_{\perp}(\Pi)
      \times \mathcal P_{\perp}(\Pi) \to \mathcal P_{\perp}(\Pi)$: \[P
      \cdot_{\iota_0} Q=\big({}^\perp\big\{\pi \in \Pi: \forall t
      \perp Q\,,\, \fpush(t,\pi) \in P\}\big)^\perp=\iota_0(\{\pi \in
      \Pi: \fpush({}^\perp Q,\pi) \in P\});\]
\item Implication map $\to_{{\iota_0}}:\mathcal P_{\perp}(\Pi)
  \times \mathcal P_{\perp}(\Pi) \to \mathcal P_{\perp}(\Pi)$:
  \[P \to_{\iota_0} Q=\big({}^\perp\big\{\fpush(t,\pi):
      \forall t \perp P\,,\,\pi \in
      Q\}\big)^\perp=\iota_0(\fpush({}^\perp P,Q)).\]
  \end{enumerate}
  In \cite[Paragraph \large{\sf{XIX}}]{kn:ocar2} the reader can find
  more details about this construction. The proof that in this manner
  we obtain a IOCA appears in \cite[Paragraph
    \large{\sf{XXV}}]{kn:ocar2}, as well as in \cite[Theorem
    2.13]{kn:ocar}, and are inspired in the original Streicher's
  proof: \cite[Lemma 5.4]{kn:streicher}.

  All the considerations of this section as well as part of the
  motivation for the main hypothesis of Theorem 8.1 and the rest of the considerations in Paragraph
  \ref{item:theresults} are related to the fact that for the case of
  $\mathcal P_\perp(\Pi)$ the operation of application and implication
  are no longer adjoint.
\end{obse} 

 \begin{coro}\label{coro:equivalenciaconlaiota2} Let $\mathcal B$ be a
  IOCA equipped with an interior operator as in Definition
  \ref{defi:almostbasicfunctors}. Assume moreover that there is an
  application map $\cdot_\iota: B_\iota \times B_\iota \to B_\iota$
  with the property that the quadruple $\mathcal B_\iota:=(B_\iota,
  \leq|_{B_\iota}, \cdot_\iota, \to_\iota, \iota(\Phi_\mathcal
  B)=\Phi_\mathcal B \cap B_\iota:=\Phi_{\mathcal B_\iota})$ is an
  $IOCA$. In this situation for all sets $I$ the injections ${\bf
    H}_{\mathcal B_\iota}(I) \subseteq {\bf H}_{\mathcal B}(I)$,
  preserve and reflect the order and are essentially
  surjective. Hence, the injection is an equivalence of
  preorders. \end{coro} \begin{theo}\label{theo:triposstrequivkriv}
  Let $\mathcal K \in {\bf AKS}$ and consider the associated IOCAs,
  $A_\perp(\mathcal K)$ and $A(\mathcal K)$. Then, the inclusion of
  the first into the second, induces an equivalence between the
  associated triposes ${\bf H}_{A_\perp(\mathcal K)}$ and ${\bf
    H}_{A(\mathcal K)}$ (Streicher's and Krivine's triposes
  respectively). \end{theo}
\begin{proof} We use Corollary \ref{coro:equivalenciaconlaiota2}, taking the map $\iota$ as the double perpendicular closure operator. The
verification of the hypothesis is almost identical to the case of
Corollary \ref{coro:equivalenciaconlaiota}. \end{proof} We also omit
the proof of the next corollary as it is very similar to the proof of
Corollary \ref{coro:fyginversas}.

\begin{coro}\label{coro:almost} Let $f:{\mathcal
    A}\to{\mathcal B}$ be an invertible arrow in ${\bf IOCA}$ with
  inverse $g$, then it induces an equivalence between ${\bf
    H}_{\mathcal A}$ and ${\bf H}_{\mathcal B}$ as indexed
  HPO. \end{coro}

\item \label{item:krivineandstreicher} We end this section by showing
  that in a manner similar than before, one can complete the proof of
  the equivalences mentioned in (b.) at the beginning  of Section
  \ref{section:IAtoTripos} and prove a version of Theorem
  \ref{theo:equivalenciaenadjuntos} for the triposes associated to
  $\mathcal B$ and $A_\perp(K(\mathcal B))$. This result
  appeared in \cite[Theorems 6.5, 6.6]{kn:ocar2}. We present a
  slightly more detailed proof in order to illustrate that in this
  case the equivalence of triposes comes from a bijective set
  theoretical map. Recall that the underlying set of
  $A_\perp(K(\mathcal B))$ is $\mathcal P_\perp(B)=\{C \subseteq B:
  \iota_0(C)=({}^{\perp}C)^\perp=C\}$ if $B$ is the underlying set of
  $\mathcal B$ and where in this case the perpendicularity relation is
  simply the inequality relation $\leq$ of $B$ (see Paragraph
  \ref{basicdiagram},\ref{item:AK}). In \cite[Lemma 6.10]{kn:ocar2} it
  is proved that $\iota_0(C)=\uparrow(\inf C)$ so that $\mathcal
  P_\perp(B)$ consists in the set of principal filters of $B$ with
  respect to its order.

 \begin{theo}\label{theo:equivalenciaenadjuntos2} Let $\mathcal
    B$ be a IOCA, then the triposes associated with $\mathcal B$ and
    $A_{\perp}(K(\mathcal B))$ are
    equivalent. \end{theo} \begin{proof} We prove this result by
   considering the following maps (that are almost the same than the
   ones in \ref{theo:equivalenciaenadjuntos}) $f:\mathcal
   P_{\perp}(B)\to B$ as $f(P)=\bigcurlywedge P$ and $g:B\to \mathcal
   P_{\perp}(B)$ as $g(a)= {\uparrow} a$ in the adapted Corollary
   \ref{coro:almost}. In this case, the situation is
   simpler than before because $f$ and $g$ are inverses of each
   other. We observed therein that $fg=\operatorname{id}_B$ and for $P
   \in \mathcal P_\perp(B)$ we have that $gf(P)={\uparrow}\inf
   P=\overline{P}=P$ and then $gf=\operatorname{id}$. The proof of the
   rest of the conditions follows closely the corresponding proof for
   the case or implicative algebras in Theorem
   \ref{theo:equivalenciaenadjuntos}. \end{proof}

\end{list}  
\end{document}